\newtheorem{Case}{Case}
\newtheorem{remark}{Remark}
\newcommand{\dd}{\mathrm{d}}
\newcommand{\ep}{\varepsilon}
\newcommand{\alp}{\alpha}
\newcommand{\ff}{\hat{f}}
\newcommand{\fr}{\hat{\rho}}
\newcommand{\ii}{\mathrm{i}}
\newcommand{\e}{\mathrm{e}}
\newcommand{\dt}{\Delta t}
\title{Numerical schemes for kinetic equations in the diffusion and anomalous diffusion limits. Part I: 
the case of heavy-tailed equilibrium}
\author{ Nicolas Crouseilles
\footnotemark[2] \footnotemark[5]
\and
H\'el\`ene Hivert
\footnotemark[3]  \footnotemark[5]
\and
Mohammed Lemou
\footnotemark[4]  \footnotemark[5]
  }
\date{}
\begin{document}

\maketitle

\renewcommand{\thefootnote}{\fnsymbol{footnote}}
\footnotetext[5]{IRMAR. Universit\'e de Rennes $1$, Campus de Beaulieu. $35000$ Rennes }
\footnotetext[2]{INRIA-IPSO. Email : nicolas.crouseilles@inria.fr}
\footnotetext[3]{Email : helene.hivert@univ-rennes1.fr }
\footnotetext[4]{CNRS.  Email : mohammed.lemou@univ-rennes1.fr }

\renewcommand{\thefootnote}{\arabic{footnote}}

\begin{abstract}
In this work, we propose some numerical schemes for linear kinetic equations in the diffusion 
and anomalous diffusion limit. When the equilibrium distribution function is a Maxwellian distribution, it is well 
known that for an appropriate time scale, the small mean free path limit gives rise to a diffusion type equation. 
However, when a heavy-tailed distribution is considered, another time scale is required and 
the small mean free path limit leads to a fractional anomalous diffusion equation. Our aim is to develop 
numerical schemes for the original kinetic model which works for the different regimes, without 
being restricted by stability conditions of standard explicit time integrators. 
First, we propose some numerical schemes for the diffusion asymptotics; 
then, their extension to the anomalous diffusion limit is studied. 
In this case, it is crucial to capture the effect of the large velocities of the heavy-tailed equilibrium, 
so that some important transformations of the schemes derived for the diffusion asymptotics are needed. 
As a result, we obtain numerical schemes which enjoy the Asymptotic Preserving property in the anomalous diffusion limit, that is: they do not suffer from the restriction on the time step and they degenerate towards 
the fractional diffusion limit when the mean free path goes to zero. 
We also numerically investigate the uniform accuracy and construct a class of numerical schemes
satisfying this property. 
Finally, the efficiency of the different numerical schemes is shown through numerical experiments. 
\end{abstract}

\begin{keywords}
BGK equation, Diffusion limit,  Anomalous diffusion equation, Asymptotic preserving scheme.
\end{keywords}

\begin{AMS}
35B25, 41A60, 65L04, 65M22.
\end{AMS}

\section{Introduction}

The modeling and numerical simulation of particles systems is a very active field of research. 
Indeed, they provide the basis for applications in neutron transport, thermal radiation, medical 
imaging or rarefied gas dynamics. According to the physical context, particles systems can be described 
at different scales. When the mean free path of the particles ({\it i.e.} the crossed distance between two collisions) 
is large compared to typical macroscopic length, 
the system is described at a microscopic level by kinetic equations; kinetic equations consider the time evolution 
of a distribution function which gives the probability of a particle to be at a given state in the six dimensional phase 
space at a given time. Conversely, when the mean free path is small, a macroscopic description 
(such as diffusion or fluid equations) can be used. It makes evolve macroscopic quantities which depends 
only on time and on the 
three dimensional spatial variable. In some situations, this description can be sufficient 
and leads to faster numerical simulations. 

Mathematically, the passage from kinetic to macroscopic models is performed by asymptotic analysis. 
From a numerical point of view, considering a small mean free path, the kinetic equation then contains stiff terms 
which make the numerical simulations very expensive for stability reasons. In fact, a typical example is the presence of 
multiple spatial and temporal scales which intervene in different positions and at different times. 
These behaviors make the construction of efficient numerical methods a real challenge.

In this work, we are interested in the time evolution of the distribution function $f$ which depends on the time $t\geq 0$, 
the space variable $x\in \Omega \subset\mathbb{R}^d$ and the velocity $v\in  \mathbb{R}^d$, with $d=1, 2, 3$. 
Particles undergo the effect of collisions which are modelized here by a linear  operator $L$ acting on $f$ 
through 
\[
L(f)(t, x, v) = \rho(t, x) M_\beta(v) - f(t, x, v), 
\]
where 
\[
\rho(t, x)=\left\langle f(t,x,v)\right\rangle =: \int_{\mathbb{R}^d} f(t, x, v) \dd v,
\] 
and 
where the equilibrium $M_{\beta}$ is defined by 
\begin{equation}
\label{equil}
M_\beta(v) =
\left\{
\begin{array}{cccll}
\displaystyle  \frac{1}{(2\pi)^{d/2}}\exp\left( -|v|^2/2  \right) & \mbox{ if } \beta = 2+d, \\
\displaystyle  \underset{|v|\to\infty}\sim\frac{m}{|v|^\beta}     & \mbox{ if } \beta \in (d,d+2), 
\end{array}
\right.
\end{equation}
where $m$ is a normalization factor.
In the sequel, we will always denote by brackets the integration over $v$. 
In order to capture a nontrivial asymptotic model, a suitable scaling has to be considered.  
The good scaling of the kinetic equation (see \cite{MelletMischlerMouhot, Mellet, Puel2}) 
satisfied by the distribution function $f$ is given by 
\begin{equation}
\label{KinDiff}
\ep^\alpha\partial_t f+\ep v\cdot \nabla_x f=L(f), \;\; \mbox{ with } \;\; \alpha=\beta-d \in (0, 2], 
\end{equation}
where $\varepsilon>0$ is the Knudsen number (ratio between the mean free path and a typical macroscopic length). The case $\alpha=2$ refers to the classical diffusion limit (the equilibrium 
corresponds to the first equation in \eqref{equil}) whereas the case 
$\alpha\in (0, 2)$ refers to the anomalous diffusion limit (the equilibrium 
corresponds to the second equation in \eqref{equil}). 
Although our analysis could be applied to general operators $L$, we will consider for a sake of simplicity  
a BGK type operator.  
We suppose that $M_\beta$ given by \eqref{equil} is an even positive function such that
$\left\langle M_\beta(v)\right\rangle =1$ 
and $\left\langle vM_\beta(v)\right\rangle=0$.
Equation \eqref{KinDiff} has to be supplemented with an initial condition $f(0,x,v)=f_0(x,v)$ and spatial periodic 
boundary conditions are considered. 

The main goal of this work is to construct numerical schemes for \eqref{KinDiff} which are uniformly stable along 
the transition from kinetic to macroscopic regime ({\it i.e.} for all $\varepsilon >0$). 
More precisely, two asymptotics are considered here: the diffusion 
limit ($\alpha=2$) and the anomalous diffusion limit ($\alpha\in (0, 2)$). For the diffusion limit, several numerical schemes  
have already been proposed but in the anomalous diffusion limit, no numerical strategy has been proposed in the literature for \eqref{KinDiff}, 
to the best of author's knowledge. 

Mathematically, the derivation of diffusion type equation from kinetic equations such as \eqref{KinDiff} when $\alpha=2$ 
was first investigated in \cite{Wigner}, \cite{BensoussanLionsPapanicolaou}, \cite{LarsenKeller} or 
\cite{DegondGoudonPoupaud}.  
When $M_\beta$ decreases quickly enough for large $|v|$, 
the solution $f$ of (\ref{KinDiff}) converges, when $\ep$ goes to zero, 
to an equilibrium function $f(t,x,v)=\rho(t,x)M_\beta(v)$ 
where $\rho(t,x)$ is solution of the diffusion equation
\begin{equation}
\label{Diff_eq}
\partial_t \rho(t,x)-\nabla_x\cdot\left( D \nabla_x \rho (t,x)\right)=0,
\end{equation}
and where the diffusion matrix $D$ is given, in the simple case of the BGK operator, by the formula
\begin{equation}
\label{D}
D=\int_{\mathbb{R}^d} v\otimes v M_\beta(v) \dd v.
\end{equation}
In particular, a crucial assumption on $M_\beta$ for $D$ to be finite is that 
\[
\int_{\mathbb{R}^d} |v|^2 M_\beta(v) \dd v <+\infty;
\] 
this is the case when $M_\beta$ is Maxwellian (first case of \eqref{equil}). 

Hence, if  the equilibrium has no finite second order moment, the asymptotic of  
rescaled kinetic equation with $\alpha=2$ is not able to capture a nontrivial dynamics 
and the value of $\alpha$ should be tuned with $M_\beta$ in order to recover this macroscopic equation 
when $\varepsilon\rightarrow 0$. Typically, let $M_\beta$ be a heavy-tailed distribution function, corresponding 
to the second case of \eqref{equil}. 
An example of such a distribution function we will often consider is $M_\beta(v)=\frac{m}{1+|v|^\beta}$ 
with $m$ chosen such that $\left\langle M_\beta(v)\right\rangle =1$.

In the case of astrophysical plasmas, it often occurs that the equilibrium $M_\beta$ is a heavy-tailed distribution of particles which typically generates an anomalous diffusion behaviour (see \cite{MendisRosenberg, SummersThorne}).
In particular, the diffusion matrix $D$ is no longer well-defined and this requires to adapt the value of $\alpha$ 
in terms of the tail decreasing rate $\beta$, 
in order to capture a nontrivial dynamics. 
It has been shown in \cite{MelletMischlerMouhot, Mellet} and \cite{Puel2} that the appropriate scaling is 
$\alpha\in (0,2)$ depends on the size $\beta$ of the tail of the equilibrium through the relation $\beta=\alp+d$.

 When $\ep$ goes to $0$, the solution of this equation  with BGK operator converges to $\rho(t,x)M_\beta(v)$ 
 where $\rho$ is the solution of the anomalous diffusion equation which can be written in Fourier variable 
 \begin{equation}
 \label{DiffAN_Eq_Four}
 \partial_t \fr(t,k)=-\kappa|k|^\alp\fr(t,k),
 \end{equation}
 where $\fr$ stands for the space Fourier variable of $\rho$, $k$ is the Fourier variable and $\kappa$ is a constant which depends only on the size of the tail of the equilibrium $M_\beta(v)$ and is defined by
 \begin{equation}
 \label{kappa}
 \kappa=\int_{\mathbb{R}^d}\frac{\left(w\cdot e\right)^2}{1+\left(w\cdot e\right)^2}\frac{m}{\left| w\right|^\beta}\dd w,
 \end{equation}
  for any $e\in\mathbb{R}^d$ such that $|e|=1$ (note that $\kappa$ does not depend on $e$). The anomalous diffusion equation can also be written in the space variable 
 \begin{equation}
 \label{DiffAN_Eq}
 \partial_t \rho(t,x)=-\frac{m\Gamma(\alp+1)}{c_{d,\alp}} \left(-\Delta_x\right)^\frac{\alp}{2}\rho(t,x),
 \end{equation}
 where $m$ is the normalization factor of $M_\beta$ 
 in (second case in \ref{equil}), 
 $\Gamma$ is the usual function defined by 
 \[
 \Gamma(x)=\int_0^{+\infty}t^{x-1}\e^{-t}\dd t,
 \]
and $c_{d,\alp}$ is a normalization constant given by 
\begin{equation}
\label{cdalp}
c_{d,\alp}=\frac{\alp \Gamma\left(\frac{d+\alp}{2}\right)}{2\pi^{\frac{d}{2}+\alp}\Gamma\left( 1-\frac{\alp}{2} \right)}.
\end{equation}
 The fractional operator of the anomalous diffusion is easily defined by its Fourier transform 
 \[
\widehat{\left(\left(-\Delta_x\right)^\frac{\alp}{2}\rho\right)}(k)=\left|k\right|^\alp \fr(k),
 \]
but has also an integral definition 
 \[
 \left(-\Delta_x\right)^{\frac{\alp}{2}}
 \rho
 (x)=c_{d,\alp}P.V.\int_{\mathbb{R}^d}\frac{\rho(x+y)-\rho(x)}{|y|^{d+\alp}}\dd y,
 \]
 where $P.V.$ denotes the principal value of the integral.
 
 The anomalous diffusion occurs not only in astrophysical plasmas but also in the study of granular media  (see \cite{ErnstBrito,BobylevGamba,BobylevCarrilloGamba}), tokamaks (see \cite{DelcastillonegreteCarrerasLynch}) or even in economy or social science: the Pareto distribution is a power tail distribution 
 that satisfies the second case of  \eqref{equil}. 
 This kind of distribution is a common object used to modelise the repartition of sizes in a given set (asteroïds, cities, income, opinions\dots), see  \cite{BouquinEco} for detailed explanations.
At a microscopic level, 
these velocity distributions arise when the motion of the particles is no longer  governed by a brownian process  but  by a Levy process (see \cite{DeMoor2}, \cite{DeMoor}). 
 
In this work, we want to construct a numerical scheme over the kinetic equation 
using a heavy-tailed distribution equilibrium, which is robust when the stiffness parameter $\ep$ goes to $0$. 
 Asymptotic Preserving (AP) schemes already exist in the case of the diffusion limit where 
 the equilibrium is a Maxwellian (see \cite{Jin,BuetCordier, CarilloGoudonLafitteVecil, JinPareschi1, Klar1,LemouMieussens,JinPareschiToscani,NaldiPareschi}) but the strategy used to obtain 
them cannot be rendered word for word to the anomalous diffusion case. 
As in the diffusion case, the difficulties when numerically solving the kinetic equation in the case 
of anomalous diffusion come from the stiff terms which require a severe condition on the numerical parameters. 
But in the anomalous diffusion case, there is an additional difficulty since one needs to take into account the large velocities arising in the equilibrium $M_\beta$. 
Indeed, the role of these high velocities is crucial and they have to be captured numerically 
in order to produce the anomalous diffusion operator when $\varepsilon\rightarrow 0$.

To derive a numerical scheme which enjoys the AP property, different strategies are investigated in this work. 
The first one is based on a fully implicit scheme in time (both the transport and the collision term are considered implicit). 
Using this {\it brute force} strategy, the so-obtained numerical scheme is obviously AP in the diffusion limit 
but not in the anomalous diffusion one. Therefore, a suitable transformation of this implicit scheme is proposed 
in this work and leads to a scheme enjoying the AP property in the anomalous diffusion limit. 
 
 From a computational point of view, a fully implicit scheme may be very expensive especially 
 if one deals with high-dimensional problems. To avoid this implicitation, we propose another strategy 
 which is based on a micro-macro decomposition. 
  The distribution function $f$ is written as a sum of 
an equilibrium part $\rho M_\beta$ plus a remainder $g$. A model equivalent to \eqref{KinDiff} is then derived, composed of a kinetic equation 
satisfied by $g$ and a macroscopic equation satisfied by $\rho$. The macroscopic flux is then reformulated 
to obtain a numerical scheme which enjoys the AP property and which does not require the implicitation of the 
transport term. 

The last strategy proposed in this work is based on a Duhamel formulation of the equation from which   
a AP numerical scheme is obtained; this approach bears similarities with \cite{HochbruckOstermann}.
This numerical scheme is efficient for the diffusion case and has to be adapted also to get an AP 
scheme in anomalous diffusion limit. 
Moreover, there are two main advantages in using such strategy. First we derive a closed equation on $\rho$ and the computation of the full distribution function is not needed at any time. 
Second, this strategy ensures that the numerical scheme has uniform accuracy in $\ep$. 
Eventually, it would be easier to construct high order (in time) schemes on this formulation, this will be done in \cite{CrousHivertLemou2}.

The paper is organized as follows. 
 In the next section we shall start with formal computations recalling  
 how the diffusion and of anomalous diffusion limits can be obtained from the kinetic equation. 
Section \ref{section3} is devoted to the derivation of the different numerical schemes: a fully implicit scheme, 
a micro-macro scheme and a scheme based on a Duhamed formulation of (\ref{KinDiff}). 
After the theoretical study of their properties, several numerical tests are presented in Section \ref{section4} 
in a one-dimensional case with periodic boundary conditions in space to illustrate and compare the efficiency  
of the various numerical schemes.

\section{A closed equation for the density and its diffusion asymptotics}
This section is devoted to the derivation from \eqref{KinDiff} of a closed equation satisfied by $\rho$.  
This equation will be the basic equation from which we formally derive the diffusion 
and the anomalous diffusion equations. 
Of course, the formal derivation of diffusion and anomalous diffusion equations is classical 
but, for a sake of clarity, we present one way (among many others) to perform such derivation. 
In particular, the formal computation below will drive the construction of a class of our numerical schemes. 
We recall here the hypothesis on the equilibrium $M_\beta(v)$ introduced in \eqref{equil}: 
it is an even function such that $\left\langle M_\beta(v)\right\rangle = 1, \left\langle vM_\beta(v)\right\rangle = 0$. 
Then we will consider the two following cases mentioned in \eqref{equil}: $\alpha=2$ and $0<\alpha<2$ 

\begin{Case}
\label{HypDiff}
$\left\langle v\otimes vM_\beta(v)\right\rangle <+\infty$ and $\alp=2$.
\end{Case}
\begin{Case}
\label{HypDiffAN}
$\left\langle  v\otimes vM_\beta(v)\right\rangle =\infty$ and $\alp\in(0,2)$. 
For example, $M_\beta$ has a heavy tail of the form 
\begin{equation}
\label{Mbeta}
M_\beta(v)\underset{|v|\to\infty}{\sim}\frac{m}{|v|^\beta}, \;\; \beta\in(d,d+2), 
\end{equation}
with a positive normalization factor $m$ ensuring $\langle M_\beta\rangle=1$.
\end{Case}

\subsection{Space Fourier transform based computations}

Starting from (\ref{KinDiff}), we propose here a method based on spatial Fourier transform 
to formally derive the asymptotic equation in both diffusion and anomalous diffusion limits. 
It will also be the basis of the numerical methods we set in Section \ref{subsec:duhamelschemes}. 

Eventually, we remind that we denote by $\hat{f}(t,k,v)$ (resp. $\fr(t,k)$) 
\[
\hat{f}(t,k,v)=\int_{\mathbb{R}^d} \e^{-\ii k\cdot v} f(t,x,v)\dd x,
\]
the space Fourier transform of $f(t,x,v)$ (resp. $\rho(t,x)$).
We have 

\begin{proposition}
\label{PropTheoriqueDiff}
\begin{romannum}
\item Equation (\ref{KinDiff}) formally implies the following exact equation on $\fr$
\begin{align}
\label{KinLim}
\fr(t,k)=&\left\langle \e^{-\frac{t}{\ep^\alp}(1+\ii\ep k\cdot v)}\ff(0,k,v)  \right\rangle
+\int_0^\frac{t}{\ep^\alp} \e^{-s}\left\langle\e^{-\ii\ep s k\cdot v}  M_\beta(v)\right\rangle\fr(t-\ep^\alp s,k)\dd s.
\end{align}
\item 
In case \ref{HypDiff} (diffusion scaling), when $\ep\to 0$, $\fr$ solves the diffusion equation
\begin{equation*}
\partial_t \fr(t,k)=-\left\langle \left( k\cdot v\right)^2M_\beta(v)\right\rangle \fr(t,k).
\end{equation*}
\item 
In case \ref{HypDiffAN} (anomalous diffusion scaling),
 when $\ep\to 0$, $\fr$ solves the anomalous diffusion equation 
\begin{equation*}
\partial_t \fr(t,k) =-\kappa\left|k\right|^\alp \fr(t,k),
\end{equation*}
with $\kappa$ defined by (\ref{kappa}) and $\beta-d=\alp.$
\end{romannum}
\end{proposition}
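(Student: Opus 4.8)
\emph{Part (i).} The plan is to Fourier-transform \eqref{KinDiff} in $x$: since $v\cdot\nabla_x$ becomes multiplication by $\ii k\cdot v$, for each fixed $(k,v)$ one gets the scalar linear ODE in $t$
\[
\partial_t\ff(t,k,v)+\frac{1}{\ep^\alp}\bigl(1+\ii\ep k\cdot v\bigr)\ff(t,k,v)=\frac{1}{\ep^\alp}\fr(t,k)\,M_\beta(v).
\]
Solving it with the integrating factor $\e^{\frac{t}{\ep^\alp}(1+\ii\ep k\cdot v)}$ yields a Duhamel representation of $\ff$ in terms of $\ff(0,k,v)$ and the source $\fr(\cdot,k)M_\beta$; applying $\left\langle\cdot\right\rangle$ to it, using $\left\langle\ff\right\rangle=\fr$ and $\left\langle M_\beta\right\rangle=1$, and then performing the change of variable $s=(t-\tau)/\ep^\alp$ in the Duhamel integral produces exactly \eqref{KinLim}. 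This step is elementary.

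\emph{Parts (ii) and (iii).} I would extract both limits from \eqref{KinLim} by one and the same argument. The free-transport term $\left\langle\e^{-\frac{t}{\ep^\alp}(1+\ii\ep k\cdot v)}\ff(0,k,v)\right\rangle$ has modulus bounded by $\e^{-t/\ep^\alp}\left\langle|\ff(0,k,\cdot)|\right\rangle$, hence is negligible to every order in $\ep$ for $t>0$. Set $\phi_\ep(s,k):=\left\langle\e^{-\ii\ep sk\cdot v}M_\beta(v)\right\rangle$; evenness of $M_\beta$ makes it real with $1-\phi_\ep(s,k)=\left\langle\bigl(1-\cos(\ep sk\cdot v)\bigr)M_\beta(v)\right\rangle$. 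In Case~\ref{HypDiff} I would Taylor-expand the cosine (legitimate since $\left\langle(k\cdot v)^2M_\beta\right\rangle<+\infty$, and no $O(\ep)$ term appears by evenness), obtaining $1-\phi_\ep(s,k)=\frac{\ep^2s^2}{2}\left\langle(k\cdot v)^2M_\beta\right\rangle+o(\ep^2)$. In Case~\ref{HypDiffAN} I would instead rescale $v=w/\ep$ and use the tail \eqref{Mbeta}: since $\ep^{-d}M_\beta(w/\ep)\to m|w|^{-\beta}$ and $\beta-d=\alp$,
\[
1-\phi_\ep(s,k)\underset{\ep\to0}{\sim}m\,\ep^{\alp}\int_{\mathbb{R}^d}\bigl(1-\cos(sk\cdot w)\bigr)\frac{\dd w}{|w|^\beta},
\]
the $w$-integral converging precisely because $\beta\in(d,d+2)$. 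Thus in both cases $1-\phi_\ep(s,k)=\ep^\alp\Psi(s,k)+o(\ep^\alp)$, with $\Psi(s,k)=\frac{s^2}{2}\left\langle(k\cdot v)^2M_\beta\right\rangle$ in the first case and $\Psi(s,k)=m\int_{\mathbb{R}^d}(1-\cos(sk\cdot w))|w|^{-\beta}\dd w$ in the second.

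With these expansions in hand, I would plug $\phi_\ep(s,k)=1-\ep^\alp\Psi(s,k)+\cdots$ and $\fr(t-\ep^\alp s,k)=\fr(t,k)-\ep^\alp s\,\partial_t\fr(t,k)+\cdots$ into \eqref{KinLim}, discard the free-transport term, and collect the $O(\ep^\alp)$ contribution: since $t/\ep^\alp\to\infty$ and $\int_0^\infty\e^{-s}\dd s=\int_0^\infty s\,\e^{-s}\dd s=1$, the $\fr(t,k)$ terms cancel and what survives is
\[
\partial_t\fr(t,k)=-\Bigl(\int_0^\infty\Psi(s,k)\,\e^{-s}\dd s\Bigr)\fr(t,k).
\]
In Case~\ref{HypDiff}, $\int_0^\infty s^2\e^{-s}\dd s=2$ gives coefficient $\left\langle(k\cdot v)^2M_\beta\right\rangle$, i.e. the stated diffusion equation. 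In Case~\ref{HypDiffAN}, Fubini together with $\int_0^\infty\e^{-s}(1-\cos(as))\dd s=\frac{a^2}{1+a^2}$ gives coefficient $m\int_{\mathbb{R}^d}\frac{(k\cdot w)^2}{1+(k\cdot w)^2}\frac{\dd w}{|w|^\beta}$, and the rescaling $w\mapsto w/|k|$ (writing $k=|k|e$, $|e|=1$) with $\beta-d=\alp$ turns it into $\kappa|k|^\alp$, $\kappa$ as in \eqref{kappa} — the stated anomalous diffusion equation.

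I expect the main obstacle to be the passage $\ep\to0$ inside the $s$-integral of \eqref{KinLim}: the Taylor expansion of $\fr(t-\ep^\alp s,k)$ is only legitimate for $s$ of order one while $s$ ranges up to $t/\ep^\alp$, and in Case~\ref{HypDiffAN} one must also justify the rescaled tail asymptotics of $\phi_\ep$ and the interchanges of limits and integrals. At the formal level these are all tamed by the exponential weight $\e^{-s}$; a fully rigorous proof would require uniform-in-$\ep$ estimates and lies outside this formal derivation.
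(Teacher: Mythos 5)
Your proposal is correct and follows essentially the same route as the paper: derive the Duhamel/integral equation \eqref{KinLim} by Fourier transform and integration in $v$, discard the exponentially small initial-layer term, Taylor-expand $\fr(t-\ep^\alp s,k)$ against the weight $\e^{-s}$, and extract the $O(\ep^\alp)$ coefficient by Taylor-expanding the velocity average in the diffusion case and by rescaling the velocity integral with the heavy tail in the anomalous case, using $\int_0^\infty\e^{-s}(1-\cos(as))\,\dd s=a^2/(1+a^2)$ to identify $\kappa|k|^\alp$. Your variants (working with the cosine by evenness, rescaling $v=w/\ep$ first and pulling out $|k|$ at the end) are cosmetic differences from the paper's change of variables $w=\ep|k|v$, and you correctly flag the same uniformity-in-$s$ issues that the paper also treats only at the formal level.
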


\newpage

\begin{proof} $ $ 
\begin{romannum}
\item \textbf{Formal derivation of the expression (\ref{KinLim}) from (\ref{KinDiff})}
\\We start by taking the Fourier transform of (\ref{KinDiff})
\[
\partial_t \ff +\ii \ep^{1-\alp} v\cdot k \ff=\frac{1}{\ep^\alp}\left(\fr  M_\beta-\ff\right). 
\]
We solve in $\ff$ (assuming $\fr$ is given) 
\[
\ff(t, k, v)=\e^{-\frac{t}{\ep^\alp}(1+\ii\ep k\cdot v)}\ff(0,k,v) +\frac{1}{\ep^\alp}\int_0^t 
\e^{-\frac{t-s}{\ep^\alp}(1+\ii\ep k\cdot v)} \fr(s,k) M_\beta(v)\dd s,
\]
which can be written after a change of variables $s\rightarrow (t-s)/\varepsilon^\alpha$
\[
\ff(t, k, v)=\e^{-\frac{t}{\ep^\alp}(1+\ii\ep k\cdot v)}\ff_0 +\int_0^\frac{t}{\ep^\alp} 
\e^{-s(1+\ii\ep k\cdot v)} \fr(t-\ep^\alp s,k) M_\beta(v)\dd s.
\]
Still denoting by brackets the integration over the velocity space, we can integrate the previous expression 
with respect to $v$ to get (\ref{KinLim}).
Then, we can  write it to make the limit equation appear 
\begin{align}
\fr=
&\hat{A}_0(t,k)
+\int_0^\frac{t}{\ep^\alp} \e^{-s}\left\langle\e^{-\ii\ep s k\cdot v}  M_\beta\right\rangle\dd s \fr
\nonumber
\\&+\ep^\alp\int_0^\frac{t}{\ep^\alp} s\e^{-s}\frac{\fr(t-\ep^\alp s,k)-\fr(t,k)}{\ep^\alp s}
\left\langle\e^{-\ii\ep s k\cdot v}  M_\beta(v)\right\rangle\dd s 
, \label{KinLim2}
\end{align}
where
\begin{equation}
\label{A0Four}
\hat{A}_0(t, k)=\left\langle \e^{-\frac{t}{\ep^\alp}(1+\ii\ep k\cdot v)}\ff_0  \right\rangle, 
\end{equation}
denotes the initial layer term.
As we will see in the next parts of the proof, this formulation gives us the two asymptotic equations of the proposition. 

\item\textbf{The diffusion limit}

Considering the diffusion scaling summarized in case \ref{HypDiff}, 
we will show that the limit equation is the diffusion equation.
In the second integral of (\ref{KinLim2}), we expand the expression of $\fr$ in a power series of $\ep$ up to the second order. 
Note that the first term of the expression is exponentially small. Indeed for all $t>0$,
$
\hat{A}_0=o(\ep^\infty) 
$
when $\ep$ goes to zero.
Then we can write, using a Taylor expansion at $\ep=0$ for the exponential 
\begin{align*}
\fr=&\int_0^\frac{t}{\ep^2} \e^{-s}\left\langle M_\beta\right\rangle\dd s\fr -\ii\ep \int_0^\frac{t}{\ep^2} s\e^{-s}\left\langle k\cdot v M_\beta\right\rangle\dd s\fr
-\frac{\ep^2}{2}\int_0^\frac{t}{\ep^2} s^2\e^{-s}\left\langle (k\cdot v)^2  M_\beta\right\rangle\dd s\fr
\\&+\ep^2\int_0^\frac{t}{\ep^2} s\e^{-s}\frac{\fr(t-\ep^2 s,k)-\fr(t,k)}{\ep^2 s}
\left\langle M_\beta\right\rangle\dd s +o(\ep^2). 
\end{align*}
Reminding that $\langle M_\beta\rangle=1$ and that $M_\beta$ is even, we can simplify this expression to obtain 
\[
\fr=\fr-\ep^2\left\langle (k\cdot v)^2M_\beta\right\rangle \fr-\ep^2\partial_t \fr+o(\ep^2),
\]
where we assume that $\fr$ is sufficiently smooth in time to write 
\[
\frac{\fr(t-\ep^2 s,k)-\fr(t,k)}{\ep^2 s}\underset{\ep\to 0}{=}-\partial_t \fr(t,k)+o(1).
\]
Finally, when $\ep$ goes to $0$ we get the expected diffusion equation 
\[
\partial_t \fr=-\left\langle (k\cdot v)^2 M_\beta\right\rangle \fr.
\]

\item \textbf{The anomalous diffusion limit}
\\Now we will show that when $M_\beta$ fulfills the conditions of case \ref{HypDiffAN} (anomalous diffusion scaling), 
the solution of (\ref{KinDiff}) converges to the solution of the anomalous diffusion equation when $\ep$ goes to zero.  

As in the case of the diffusion limit, the first term has exponential decay when $\ep$ goes to zero,
$\hat{A}_0
=o(\ep^\infty), $ for all $t>0$.
With the same assumption of smoothness in time for $\fr$, we get the following behaviour for the third term of (\ref{KinLim2}) 
\[
\int_0^\frac{t}{\ep^\alp} s\e^{-s}\frac{\fr(t-\ep^\alp s,k)-\fr(t,k)}{\ep^\alp s}
\left\langle\e^{-\ii\ep s k\cdot v}  M_\beta\right\rangle\dd s \underset{\ep\to 0}{\sim}-\partial_t \fr(t,k)+o(1). 
\]
We now have to make appear the fractional diffusion in the second term of (\ref{KinLim2}). 
Since the moment of order $2$ of $M_\beta$ is not finite, we cannot expand the exponential term 
into a power series in $\ep$ anymore. We decompose this term as
\[
\int_0^\frac{t}{\ep^\alp} \e^{-s}\left\langle\e^{-\ii\ep s k\cdot v}  M_\beta\right\rangle\dd s =
\int_0^{\frac{t}{\ep^\alp}} \e^{-s} \dd s + \int_0^\frac{t}{\ep^\alp} \e^{-s}\left\langle\left(\e^{-\ii\ep s k\cdot v} -1\right) M_\beta\right\rangle\dd s,
\]
and we focus on the second term on the right hand side of this last equality.

In order to simplify the computations, we will consider the following specific form for the equilibrium $M_\beta$: 
 $M_\beta(v)=\frac{m}{1+|v|^\beta}$, where $m$ is chosen to ensure that $\left\langle M_\beta(v)\right\rangle =1$. 
 The general case, dealing with $M_\beta$ satisfying (\ref{Mbeta}) can be done similarly, see supplementary materials for details.
  We consider the integral 
 \newline
$\left\langle \left( \e^{-\ii \ep s k\cdot v}-1\right) M_\beta\right\rangle$
and, for nonzero $k$, we perform the change of variables $w=\ep |k| v$ 
\begin{align*}
\left\langle \left( \e^{-\ii \ep s k\cdot v}-1\right) \frac{m}{1+|v|^\beta}\right\rangle
&=\left( \ep |k|\right)^{\beta-d}\int_{\mathbb{R}^d} \left( \e^{-\ii s\frac{k}{|k|} \cdot w}-1\right) \frac{m}{\left(\ep |k|\right)^\beta+|w|^\beta}\dd w. 
\end{align*}
Since the last integral has rotational symmetry, for any unitary vector $e$ of $\mathbb{R}^d$ we have
\begin{align}
\left\langle \left( \e^{-\ii \ep s k\cdot v}-1\right) \frac{m}{1+|v|^\beta}\right\rangle
&=\left( \ep |k|\right)^{\beta-d}\int_{\mathbb{R}^d} \left( \e^{-\ii s e\cdot w}-1\right) \frac{m}{\left(\ep |k|\right)^\beta+|w|^\beta}\dd w \label{CalculDiffAN_1}\\
&=\left(  \ep |k|\right)^{\beta-d} \mathcal{C}(s)+\left(  \ep |k| s\right)^{\beta-d}R(\ep,s,k) \nonumber,
\end{align}
where 
\begin{equation}
\label{Cs}
 \mathcal{C}(s)=\int_{\mathbb{R}^d} \left( \e^{-\ii s e\cdot w}-1\right)\frac{m}{|w|^\beta}\dd w,
 \end{equation}
  and 
  \[
  R(\ep,s,k)=-m\left( \ep s |k|\right)^{\beta}\int_{\mathbb{R}^d} \left( \e^{-\ii e\cdot w}-1\right)\frac{1}{|w|^\beta \left( \left(\ep s |k|\right)^\beta+|w|^\beta \right)}\dd w. 
  \]
In particular,  $R(\ep,s,k)$  tends to $0$ when $\ep$ go to zero and is bounded. If we set $R(\ep,s,0)=0$,  
equality \eqref{CalculDiffAN_1} is also true for $k=0$.

From (\ref{KinLim2}) and \eqref{CalculDiffAN_1}, we have to set $\alp=\beta-d$ to get the fractional diffusion equation when $\ep\to 0$ and then we get 
\[
\fr \underset{\ep\to 0}{=} \fr+\ep^\alp|k|^\alp\int_0^{+\infty}  \left( \int_{w\in \mathbb{R}^d}\left( \e^{-isw\cdot e}-1 \right) \frac{m}{|w|^\beta} \dd w\right)   \e^{-s} \dd s\fr -\ep^\alp \partial_t\fr+o(\ep^\alp), \; \forall t>0. 
\]
From the equality
\begin{align*}
\int_0^{+\infty}\left( \int_{w\in \mathbb{R}^d}\left( \e^{-isw\cdot e}-1 \right) \frac{m}{|w|^\beta} \dd w   \right)\e^{-s} \dd s &=-\int_{w\in\mathbb{R}^d} \frac{(w\cdot e)^2}{1+(w\cdot e)^2}\frac{m}{|w|^{d+\alp}}\dd w
:=-\kappa,
\end{align*}
we derive, when $\ep$ goes to $0$, the fractional diffusion equation (\ref{DiffAN_Eq_Four}) given in \cite{MelletMischlerMouhot,Puel2}.

\end{romannum}
\qquad \end{proof}

\subsection{Computations in the space variable}

The aim of this section is to generalize the previous computations to the original space variable.
We have the following proposition  

\begin{proposition}
\label{PropTheoriqueDiffAN}
\begin{romannum}
\item Equation (\ref{KinDiff}) formally implies the following exact equation on $\rho$
\begin{equation}
\label{KinLimNoFour}
\rho(t,x)=\left\langle \e^{-{\frac{t}{\ep^\alp}}}f_0(x-\ep^{1-\alp}tv,v) \right\rangle +\int_0^{\frac{t}{\ep^\alp}}\e^{-s} \left\langle \rho(t-\ep^\alp s, x-\ep s v) M_\beta(v) \right\rangle \dd s.
\end{equation}
\item In case \ref{HypDiff} (diffusion scaling), when $\ep\to 0$, $\rho$ solves the diffusion equation 
\[
\partial_t \rho(t,x)=\nabla_x\cdot\left( D\nabla_x \rho(t,x) \right),
\text{~with~} D=\left\langle v\otimes vM_\beta(v)\right\rangle.
\] 
\item
In  case \ref{HypDiffAN} (anomalous diffusion scaling), when $\ep\to 0$, $\rho$ solves the anomalous diffusion equation 
\[
\partial_t \rho(t,x)=-\frac{m\Gamma(\alp+1)}{c_{d,\alp}} \left(-\Delta_x\right)^\frac{\alp}{2}\rho(t,x),
\]
where $\Gamma$ is the usual Euler Gamma function, $m$ the normalization constant appearing in 
the expression of the equilibrium $M_\beta$, $c_{d,\alp}$ is defined by (\ref{cdalp}) and
$\beta-d=\alp$.

\end{romannum}
\end{proposition}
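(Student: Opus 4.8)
The plan is to follow the three parts of Proposition~\ref{PropTheoriqueDiff}, working this time directly in the space variable rather than in Fourier.

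For part~(i), rather than inverting the Fourier identity (\ref{KinLim}) I would redo the Duhamel step directly on (\ref{KinDiff}). The transport operator $\ep^\alp\partial_t+\ep v\cdot\nabla_x$ has characteristics $s\mapsto x-\ep^{1-\alp}(t-s)v$, so integrating the collision term as a source along them gives
\[
f(t,x,v)=\e^{-t/\ep^\alp}f_0\bigl(x-\ep^{1-\alp}tv,v\bigr)+\frac{1}{\ep^\alp}\int_0^t\e^{-(t-s)/\ep^\alp}\,\rho\bigl(s,x-\ep^{1-\alp}(t-s)v\bigr)\,M_\beta(v)\,\dd s .
\]
The substitution $s\mapsto(t-s)/\ep^\alp$ (under which $\ep^{1-\alp}(t-s)$ becomes $\ep s$), followed by integration in $v$ and the definition $\rho=\langle f\rangle$, yields exactly (\ref{KinLimNoFour}).

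For parts~(ii) and~(iii), the shortest argument is that these are the inverse Fourier transforms of parts~(ii)--(iii) of Proposition~\ref{PropTheoriqueDiff}: since $\langle(k\cdot v)^2M_\beta\rangle=k^{\top}Dk$ with $D=\langle v\otimes vM_\beta\rangle$ and $\widehat{\partial_{x_i}\partial_{x_j}\rho}=-k_ik_j\fr$, one has $\widehat{\nabla_x\cdot(D\nabla_x\rho)}=-k^{\top}Dk\,\fr$, which gives (ii); and (iii) reduces to the constant identity $\kappa=m\Gamma(\alp+1)/c_{d,\alp}$ between (\ref{kappa}) and (\ref{cdalp}), via $\widehat{(-\Delta_x)^{\alp/2}\rho}=|k|^\alp\fr$. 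It is nonetheless more instructive to rerun the asymptotic analysis of the proof of Proposition~\ref{PropTheoriqueDiff}(iii) directly on (\ref{KinLimNoFour}), since this produces the integral form of the operator: write $\rho(t-\ep^\alp s,x-\ep sv)=\rho(t,x)+[\rho(t-\ep^\alp s,x-\ep sv)-\rho(t,x-\ep sv)]+[\rho(t,x-\ep sv)-\rho(t,x)]$. The $\rho(t,x)$ term integrates against $\int_0^{t/\ep^\alp}\e^{-s}\,\dd s\to1$; the time-increment bracket contributes $-\ep^\alp\partial_t\rho+o(\ep^\alp)$ exactly as in the Fourier proof; and for the space-increment bracket, taking the model equilibrium $M_\beta(v)=m/(1+|v|^\beta)$ for concreteness, the changes of variables $w=\ep v$ and then $y=sw$ give, since $\beta-d=\alp$,
\[
\bigl\langle[\rho(t,x-\ep sv)-\rho(t,x)]\,M_\beta(v)\bigr\rangle=\ep^\alp s^\alp\int_{\mathbb{R}^d}\bigl(\rho(t,x-y)-\rho(t,x)\bigr)\frac{m}{(\ep s)^\beta+|y|^\beta}\,\dd y .
\]
Letting $\ep\to0$ in the kernel and using $\int_0^{+\infty}s^\alp\e^{-s}\,\dd s=\Gamma(\alp+1)$, this term produces $\ep^\alp\Gamma(\alp+1)\int_{\mathbb{R}^d}(\rho(t,x-y)-\rho(t,x))\,\frac{m}{|y|^{d+\alp}}\,\dd y+o(\ep^\alp)$, which by the principal-value representation of $(-\Delta_x)^{\alp/2}$ recalled in the introduction (invariant under $y\mapsto-y$) equals $-\ep^\alp\frac{m\Gamma(\alp+1)}{c_{d,\alp}}(-\Delta_x)^{\alp/2}\rho+o(\ep^\alp)$. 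Inserting the three contributions into (\ref{KinLimNoFour}), the $\rho(t,x)$ terms cancel; dividing by $\ep^\alp$ and sending $\ep\to0$ gives the claimed equation. In case~\ref{HypDiff} the same decomposition, with a second-order Taylor expansion of $\rho$ in $x$ (using $\langle vM_\beta\rangle=0$, the symmetry of $D$, and $\int_0^{+\infty}s^2\e^{-s}\,\dd s=2$), produces $\ep^2\nabla_x\cdot(D\nabla_x\rho)-\ep^2\partial_t\rho+o(\ep^2)$, hence (ii). The general heavy tail (\ref{Mbeta}) is treated as in the Fourier proof, the bounded remainder being deferred to the supplementary material.

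The main obstacle, as in the Fourier case, is interchanging the limit $\ep\to0$ with the $s$-integral over the expanding interval $(0,t/\ep^\alp)$: one needs a bound on $\int_{\mathbb{R}^d}(\rho(t,x-y)-\rho(t,x))\,\frac{m}{(\ep s)^\beta+|y|^\beta}\,\dd y$ uniform in $\ep$ and $s$ — the exact analogue of the bounded remainder $R(\ep,s,k)$ of Proposition~\ref{PropTheoriqueDiff} — which requires enough regularity and decay of $\rho$ to control both the second-order space increment near $y=0$ and the time increment. Should one prefer the Fourier-transfer route for (iii), the remaining work is instead the self-contained special-function identity $\kappa=m\Gamma(\alp+1)/c_{d,\alp}$.
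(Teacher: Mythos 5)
Your proposal is correct and follows essentially the same route as the paper: Duhamel along the characteristics for (i), the same three-term splitting of $\rho(t-\ep^\alp s,x-\ep s v)$ for the limits, and for (iii) the change of variables $y=\ep s v$ producing the kernel $m/((\ep s)^\beta+|y|^\beta)$, whose $\ep\to0$ limit combined with $\int_0^{+\infty}s^\alp\e^{-s}\dd s=\Gamma(\alp+1)$ yields the principal-value fractional Laplacian — the paper simply applies Fubini the other way, packaging the $s$-integration into the function $a(\ep,z)$ and proving exactly the uniform bound you flag as the analogue of $R(\ep,s,k)$. The remaining differences (odd-moment cancellation in place of the paper's symmetrization in (ii), and the optional Fourier-transfer shortcut for (ii)--(iii)) are cosmetic and do not change the substance.
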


\begin{proof} $ $

\begin{romannum}
\item \textbf{Formal derivation of the expression (\ref{KinLimNoFour})  from (\ref{KinDiff})}

As in the previous section, we can integrate (\ref{KinDiff}) to get  
%
\[
\rho=A_0 +\int_0^{\frac{t}{\ep^\alp}}\e^{-s} \left\langle \rho(t-\ep^\alp s, x-\ep s v) M_\beta \right\rangle \dd s,
\]
with
\begin{equation}
\label{A0}
A_0(t,x)=\left\langle \e^{-{\frac{t}{\ep^\alp}}}f_0(x-\ep^{1-\alp}tv,v) \right\rangle.
\end{equation}

\item \textbf{The diffusion limit}
\\Since $\langle\rho(t,x-\ep s v)\rangle =\langle \rho(t,x+\ep s v)\rangle$, we can rewrite (\ref{KinLimNoFour}) 
in order to make the limit equation appear 
\begin{align}
\label{KinDiffNoFour}
\rho(t,x)&=A_0(t,x)+\ep^2\int_0^\frac{t}{\ep^2}s\e^{-s}\left\langle \frac{\rho(t-\ep^2s,x-\ep sv)-\rho(t,x-\ep s v)}{\ep^2 s} M_\beta(v)\right\rangle \dd s \nonumber \\&
+\frac{\ep^2}{2}\int_0^{\frac{t}{\ep^2}}s^2\e^{-s}\left\langle \frac{\rho(t,x-\ep v s)+\rho(t,x+\ep v s)-2\rho(t,x)}{\ep^2  s^2} M_\beta(v) \right\rangle \dd s
 \nonumber \\ 
 &+\int_0^{\frac{t}{\ep^2}} \e^{-s}\dd s \rho(t,x).
\end{align}
First, we again notice that the initial layer term $A_0$ defined in (\ref{A0}) has exponential decay 
when $\ep$ goes to zero. Second, Taylor expansions in the two integrals in time enable to write for $\ep$ sufficiently small
\begin{align*}
\rho&=\ep^2\int_0^{\frac{t}{\ep^\alp}} s \e^{-s}\left( -\partial_t \rho \right)\left\langle M_\beta\right\rangle\dd s+\frac{\ep^2}{2} \int_0^{\frac{t}{\ep^\alp}} s^2 \e^{-s} \left\langle {}^t v H_x\rho v M_\beta \right\rangle \dd s 
\\&+\left( 1-\e^{-\frac{t}{\ep^2}} \right)\rho +o(\ep^2),
\end{align*}
where $H_x\rho$ denotes the Hessian of $\rho$. This last expression can be simplified into
\[
\partial_t \rho=\left\langle {}^t v H_x\rho v M_\beta(v) \right\rangle,
\]
when $\ep$ goes to $0$. With $D$ defined in (\ref{D}), we have 
$\left\langle {}^t v H_x\rho v M_\beta(v) \right\rangle=\nabla_x\cdot\left( D\nabla_x \rho \right)$ 
so this last equation is the diffusion equation as expected.

\item \textbf{The anomalous diffusion limit}
\\We rewrite (\ref{KinLimNoFour}) in the case of the anomalous diffusion scaling as follows
\begin{align}
\rho(t,x)&=A_0(t,x)+\ep^\alp\int_0^{\frac{t}{\ep^\alp}}s\e^{-s}\left\langle  \frac{\rho\left(t-\ep^\alp s,x-\ep v s\right)-\rho\left( t,x-\ep v s\right)}{\ep^\alp s} M_\beta(v)\right\rangle \dd s \nonumber \\
&+\int_0^{\frac{t}{\ep^\alp}}\e^{-s}\left\langle \left( \rho(t,x-\ep v s)-\rho(t,x)\right)M_\beta(v)\right\rangle \dd s  +\int_0^{\frac{t}{\ep^\alp}}\e^{-s}\dd s \left\langle M_\beta(v)\right\rangle \rho(t,x). \label{KinDiffANNoFour}
\end{align}
Under smoothness assumptions on $\rho$, the first integral of (\ref{KinDiffANNoFour}) 
becomes, for small $\ep$
\begin{align*}
\int_0^{\frac{t}{\ep^\alp}}s\e^{-s}\left\langle  \frac{\rho\left(t-\ep^\alp s,x-\ep v s\right)-\rho\left( t,x-\ep v s\right)}{\ep^\alp s} M_\beta(v)\right\rangle \dd s 
&=-\partial_t \rho(t,x) +o(1).
\end{align*}
Since the last term of (\ref{KinDiffANNoFour}) can be explicitly computed, let us focus on the second integral of (\ref{KinDiffANNoFour}). We perform the change of variables $y=x-\ep v s$ in the integral over $v$  
and exchanging the integrals in $t$ and $y$ to get 
\begin{equation}
\label{apparitiondea}
\int_0^{\frac{t}{\ep^\alp}}\e^{-s}\left\langle \left( \rho(t,x-\ep v s)-\rho(t,x)\right)M_\beta(v)\right\rangle \dd s 
=\int_{\mathbb{R}^d} \frac{\rho(t,y)-\rho(t,x)}{\left| x-y\right|^\beta}  a(\ep,x-y) \dd y,  
\end{equation}
where
\begin{equation}
\label{a}
a(\ep,z)=\int_0^{\frac{t}{\ep^\alp}} |z|^\beta \frac{\e^{-s}}{(\ep s)^d}M_\beta\left( \frac{z}{\ep s} \right) \dd s.
\end{equation}
Now, we want to find an equivalent of $a(\ep,z)$ for small values of $\ep$. Here we consider the equilibrium $M_\beta(v)=\frac{m}{1+|v|^\beta}$, the general case with $M_\beta$ satisfying (\ref{Mbeta}) can be done similarly (see supplementary materials for more details). The integral $a$ becomes 
\[
a(\ep,z)=m\int_0^{\frac{t}{\ep^\alp}} |z|^\beta \frac{(\ep s)^\beta}{(\ep s)^\beta+|z|^\beta}\frac{\e^{-s}}{(\ep s)^d}\dd s,
\]

and we consider, for a nonzero $z$, the following quantity
\begin{align*}
a(\ep,z)-m\ep^{\beta-d}\Gamma(\beta-d+1)
&=-m\int_0^{+\infty} (\ep s)^{\beta-d}\e^{-s} \frac{(\ep s)^\beta}{(\ep s)^\beta+|z|^\beta}\dd s  \\
&-m\int_{\frac{t}{\ep^\alp}}^{+\infty} |z|^\beta \frac{(\ep s)^\beta}{(\ep s)^\beta+|z|^\beta}\frac{\e^{-s}}{(\ep s)^d}\dd s.
\end{align*}
Then,  for almost every $z\in\mathbb{R}^d$, $\frac{1}{\ep^{\beta-d}}\left(a(\ep,z)-m\ep^{\beta-d}\Gamma(\beta-d+1)\right)$ converges to $0$ when $\ep$ goes to zero. We also have the following estimation 
\begin{equation}
\label{inegalitecool}
\left|a(\ep,z)-m\ep^{\beta-d}\Gamma(\beta-d+1)\right|
\le K \left( \ep^{\beta-d}+ \ep^{\beta-d}\e^{-\frac{t}{2\ep^\alp}}\right).
\end{equation}

Let us remark that  for a reasonable regularity of $\rho$, the integral appearing in (\ref{apparitiondea})
is well defined as a principal value because $\beta\in(d,d+2)$
\[
P.V.\int_{\mathbb{R}^d} \frac{\rho(t,y)-\rho(t,x)}{|y-x|^\beta}\dd y=\int_{\mathbb{R}^d} \frac{\rho(t,y)-\rho(t,x)-(y-x)\cdot \nabla_x \rho(t,x)}{|y-x|^\beta}\dd y.
\]
Hence, using (\ref{inegalitecool}) we obtain that for almost all $y\in\mathbb{R}^d$
\begin{align*}
&\left|\frac{1}{\ep^{\beta-d}} \frac{\rho(t,y)-\rho(t,x)-(y-x)\cdot \nabla_x \rho(t,x)}{|y-x|^\beta}\left( a(\ep,z)-m\ep^{\beta-d}\Gamma(\beta-d+1) \right)\right|, 
\end{align*}
 is dominated by an integrable function. We also obtained that for almost all $y\in\mathbb{R}^d$ 
 it tends to zero when $\ep$ goes to zero, by using the dominated convergence theorem for this function.

We have, from (\ref{apparitiondea})
\begin{align*}
\int_0^{\frac{t}{\ep^\alp}}&\e^{-s}\left\langle \left( \rho(t,x-\ep v s)-\rho(t,x)\right)M_\beta(v)\right\rangle \dd s \\
&=\int_{\mathbb{R}^d} \frac{\rho(t,y)-\rho(t,x)-(y-x)\cdot \nabla_x \rho(t,x)}{|x-y|^\beta} a(\ep, x-y)\dd y,
\end{align*}
where we used that $a$ is even. 
So we have 
\begin{align*}
&\int_0^{\frac{t}{\ep^\alp}}\e^{-s}\left\langle \left( \rho(t,x-\ep v s)-\rho(t,x)\right)M_\beta(v)\right\rangle \dd s \\
&\underset{\ep\to 0}\sim m\ep^{\beta-d}\Gamma(\beta-d+1)\int_{\mathbb{R}^d} \frac{\rho(t,y)-\rho(t,x)-(y-x)\nabla_x\rho(t,x)}{|y-x|^\beta}\dd y \\
&\underset{\ep\to 0}\sim m\ep^{\beta-d}\Gamma(\beta-d+1)P.V.\int_{\mathbb{R}^d} \frac{\rho(t,y)-\rho(t,x)}{|y-x|^\beta}\dd y.
\end{align*}
We remark that we have to set
$\beta-d=\alp$ to recover the limit equation, so we get
\[
\rho(t,x)\underset{\ep\to 0}=-\ep^\alp \partial_t\rho(t,x)+ m\ep^{\alp}\Gamma(\alp+1)P.V.\int_{\mathbb{R}^d} \frac{\rho(t,y)-\rho(t,x)}{|y-x|^{\alp+d}}\dd y +\rho(t,x)+o\left(\ep^{\alp}\right),
\]
that is when $\ep$ goes to $0$
\[
\partial_t\rho(t,x)=-m\Gamma(\alp+1)P.V.\int_{\mathbb{R}^d} \frac{\rho(t,x)-\rho(t,y)}{|y-x|^{\alp+d}}\dd y,
\]
which is the expected anomalous diffusion equation. 
\end{romannum}
\qquad \end{proof}

\section{Numerical schemes}
\label{section3}

This section is devoted to the presentation of appropriate numerical schemes to capture the solution of (\ref{KinDiff}).
We want these schemes to be Asymptotic Preserving (AP), that is, for arbitrary initial condition $f_0$
\begin{remunerate}
\item to be consistent with  the kinetic equation (\ref{KinDiff}) when the time step $\dt$ tends to $0$, with a fixed $\ep$,
\item to degenerate into a scheme solving the asymptotic equation (diffusion or anomalous diffusion) when $\ep$ goes to $0$ with a fixed time step $\dt$.
\end{remunerate}
In the sequel, three different numerical schemes are presented: a fully implicit scheme, a micro-macro decomposition based scheme and a Duhamel based scheme. We will consider a time discretization $t_n=n\dt, n=0,\ldots, N$ such that $N\dt= T$ where $T$ is the final time and we will set $f^n\simeq f(t_n)$.

As we used Fourier transform in the previous formal computations, we will consider a bounded domain $\Omega$ for the spatial domain with periodic conditions. Hence, we will be able to use the discrete Fourier transform in the algorithm.

\subsection{Implicit scheme}

The first idea to design a scheme for the kinetic equation (\ref{KinDiff}) is to set an implicit scheme over the Fourier formulation of the kinetic equation. We will see that in the case of the diffusion limit it preserves easily the asymptotic equation. But in the case of the anomalous diffusion limit, the effect of large velocities must be taken into account to obtain the good asymptotic and this needs a suitable modification of this fully implicit scheme.

We start with (\ref{KinDiff}) written in the spatial Fourier variable
and consider a fully implicit time discretization 
\[
\frac{\ff^{n+1}-\ff^n}{\Delta t}+\frac{1}{\ep^\alp}\left(1+\ii\ep k\cdot v\right)\ff^{n+1}=\frac{1}{\ep^\alp}\fr^{n+1}M_\beta. 
\]
We have 
\begin{equation}
\label{fnplus1}
\ff^{n+1}=\frac{1-\lambda}{1+\ii\lambda\ep k\cdot v}\ff^n+\frac{\lambda}{1+\ii\lambda\ep k\cdot v}\fr^{n+1}M_\beta,
\end{equation}
with
\begin{equation}
\label{lambda}
\lambda=\frac{\dt}{\varepsilon^\alpha+\dt}. 
\end{equation}
Note that $0<\lambda<1, \lambda\underset{\dt\to 0}\longrightarrow0$ and that $\lambda\underset{\ep\to 0}{\rightarrow}1$.

To compute $\ff^{n+1}$ from \eqref{fnplus1}, $\fr^{n+1}$ has to be determined first. 
To do that 
we integrate (\ref{fnplus1}) with respect to $v$ to get  
\begin{equation}
\label{Implicite1}
\fr^{n+1}=\lambda\left\langle \frac{M_\beta}{1+\ii \lambda\ep k\cdot v} \right\rangle \fr^{n+1}+\left(1-\lambda\right)\left\langle\frac{\ff^n}{1+\ii\lambda\ep k\cdot v}\right\rangle,
\end{equation}
that is
\begin{equation}
\label{rhonplus1}
\fr^{n+1}=\left\langle \frac{\ff^n}{1+\ii\lambda\ep k\cdot v} \right\rangle\frac{1}{\left\langle \frac{M_\beta}{1+\ii\lambda\ep k\cdot v} \right\rangle +\frac{1}{1-\lambda}\left\langle \frac{\ii \lambda\ep k\cdot v M_\beta}{1+\ii \lambda\ep k\cdot v} \right\rangle}.
\end{equation}

We have the following proposition
\begin{proposition}
\label{PropFullImplicitDiff}
In the case \ref{HypDiff} (diffusion scaling), we consider the scheme defined for all $k$ and 
for all time index $0\le n\leq N, N\dt = T$ by
\begin{equation}
\label{S1}
\left\{
\begin{array}{l} 
\displaystyle
\fr^{n+1}(k)=\left\langle \frac{\ff^n}{1+\ii\lambda\ep k\cdot v} \right\rangle
\left(\left\langle \frac{M_\beta}{1+\ii\lambda\ep k\cdot v} \right\rangle +\frac{1}{1-\lambda}\left\langle \frac{\ii \lambda\ep k\cdot v M_\beta}{1+\ii \lambda\ep k\cdot v} \right\rangle\right)^{-1} \\
\displaystyle
\ff^{n+1}=\frac{1}{1+\ii\lambda\ep k\cdot v} \left[ (1-\lambda) \ff^n+\lambda \fr^{n+1}M_\beta\right],
\end{array}
\right.
\end{equation}
with $\lambda=\dt/(\varepsilon^2+\dt)$ and with the initial condition $\ff^0(k,v)=\ff_0(k,v)$. 
This scheme has the following properties:
\begin{romannum}
\item \label{31i} The scheme is of order $1$ for any fixed $\ep$ and  preserves the total mass
\[
\forall n\in[\![1,N]\!], \fr^n(0)=\fr^0(0).
\]
\item \label{31ii} The scheme is AP: 
for a fixed $\dt$, the scheme solves the diffusion equation when $\ep$ goes to zero 
	\begin{equation}
	\label{EulerDiffFour}
\frac{\fr^{n+1}(k)-\fr^n(k)}{\Delta t}=-\left\langle (k\cdot v)^2 M_\beta(v)\right\rangle\fr^{n+1}(k).
\end{equation}
\end{romannum}

\end{proposition}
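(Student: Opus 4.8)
The plan is to build on the observation that \eqref{S1} is, by construction, nothing but the algebraic resolution of the backward Euler discretization \eqref{fnplus1}--\eqref{Implicite1} of the Fourier form of \eqref{KinDiff}. I would first settle well-posedness by checking that the denominator appearing in \eqref{rhonplus1} never vanishes: since $M_\beta$ is even the odd parts integrate to zero and this denominator equals
\[
\left\langle \frac{M_\beta}{1+(\lambda\ep k\cdot v)^2}\right\rangle+\frac{1}{1-\lambda}\left\langle \frac{(\lambda\ep k\cdot v)^2 M_\beta}{1+(\lambda\ep k\cdot v)^2}\right\rangle,
\]
which is the sum of a strictly positive and a nonnegative term because $0<\lambda<1$. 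Hence $\fr^{n+1}$, and then $\ff^{n+1}$, are well defined and \eqref{S1} is equivalent to \eqref{fnplus1}--\eqref{Implicite1}.

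For item \eqref{31i}, the order $1$ statement is then immediate, since \eqref{fnplus1}--\eqref{Implicite1} is the implicit Euler scheme applied to the linear ODE in $t$ (with parameter $k$) $\partial_t\ff+\ep^{-\alp}(1+\ii\ep k\cdot v)\ff=\ep^{-\alp}\fr M_\beta$, which is consistent of order one in $\dt$ for a fixed $\ep$. For mass conservation I would first show that the scheme keeps $\langle\ff^n\rangle=\fr^n$: integrating the second line of \eqref{S1} over $v$ and using $\langle M_\beta(1+\ii\lambda\ep k\cdot v)^{-1}\rangle+\langle\ii\lambda\ep k\cdot v\,M_\beta(1+\ii\lambda\ep k\cdot v)^{-1}\rangle=\langle M_\beta\rangle=1$ together with the first line of \eqref{S1}, a short computation gives $\langle\ff^{n+1}\rangle=\fr^{n+1}$. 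Then evaluate the first line of \eqref{S1} at $k=0$: the second bracket vanishes and $\langle M_\beta\rangle=1$, so $\fr^{n+1}(0)=\langle\ff^n(0,\cdot)\rangle=\fr^n(0)$, and an induction starting from $\fr^0=\langle\ff_0\rangle$ concludes.

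For item \eqref{31ii}, the algebraic key is that with $\lambda=\dt/(\ep^2+\dt)$ one has $1-\lambda=\ep^2/(\ep^2+\dt)$, hence $(\lambda\ep)^2/(1-\lambda)=\lambda^2(\ep^2+\dt)=\lambda\dt$. Substituting this into the rewriting of the denominator above, the first line of \eqref{S1} becomes
\[
\left(\left\langle \frac{M_\beta}{1+(\lambda\ep k\cdot v)^2}\right\rangle+\lambda\dt\left\langle \frac{(k\cdot v)^2 M_\beta}{1+(\lambda\ep k\cdot v)^2}\right\rangle\right)\fr^{n+1}(k)=\left\langle \frac{\ff^n}{1+\ii\lambda\ep k\cdot v}\right\rangle.
\]
Now let $\ep\to0$ with $\dt$ fixed, so that $\lambda\to1$ and $\lambda\ep\to0$. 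In Case \ref{HypDiff} the moment $\langle (k\cdot v)^2 M_\beta\rangle$ is finite, so dominated convergence (the integrands being bounded by $M_\beta$, resp. $(k\cdot v)^2M_\beta$) shows that the left coefficient tends to $1+\dt\langle(k\cdot v)^2M_\beta\rangle$, while the right-hand side tends to $\langle\ff^n\rangle=\fr^n$; passing to the limit yields $(1+\dt\langle(k\cdot v)^2M_\beta\rangle)\fr^{n+1}=\fr^n$, i.e. exactly \eqref{EulerDiffFour}. I would also note, from the second line of \eqref{S1}, that $\ff^{n+1}\to\fr^{n+1}M_\beta$ as $\ep\to0$, so the numerical solution becomes well prepared after one step, which rigorously legitimizes $\langle\ff^n\rangle\to\fr^n$ for $n\ge1$ and provides the $\ep$-uniform moment bounds needed to make the argument non-formal.

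The main obstacle — and the reason the scheme must be modified in Case \ref{HypDiffAN} — is precisely this passage to the limit in the denominator of \eqref{rhonplus1}: the prefactor $1/(1-\lambda)\sim(\ep^2+\dt)/\ep^2$ is singular as $\ep\to0$, but it multiplies the vanishing quantity $(\lambda\ep)^2$, and the identity $(\lambda\ep)^2/(1-\lambda)=\lambda\dt$ turns the product into an $O(\dt)$ term whose limit is controlled by dominated convergence \emph{only because} $\langle(k\cdot v)^2M_\beta\rangle<\infty$. This is exactly what fails for a heavy-tailed $M_\beta$.
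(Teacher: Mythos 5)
Your proof is correct and follows essentially the same route as the paper: you exploit the evenness of $M_\beta$ and the algebraic identity linking $\lambda$, $\ep^2$ and $\dt$ (the paper uses $\frac{1}{\dt}\frac{\lambda}{1-\lambda}=\frac{1}{\ep^2}$, you use the equivalent $(\lambda\ep)^2/(1-\lambda)=\lambda\dt$) to rewrite the scheme so that the finite second moment $\left\langle (k\cdot v)^2 M_\beta\right\rangle$ appears, and then pass to the limit $\ep\to0$ at fixed $\dt$ by dominated convergence, recovering the implicit Euler discretization \eqref{EulerDiffFour}. The extra details you supply (nonvanishing denominator, explicit mass conservation via $\langle\ff^{n+1}\rangle=\fr^{n+1}$) are consistent with, and slightly more explicit than, the paper's treatment, which declares item (i) straightforward.
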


\begin{proof} As (i) is straightforward, let us prove (ii).
In the case of the diffusion asymptotic, we must check that the scheme is AP. 
We rewrite (\ref{rhonplus1}) to get
\[
\left(1-\lambda\right)\fr^{n+1}=-\lambda\left\langle \frac{\ii\lambda\ep k\cdot v M_\beta}{1+\ii\lambda\ep k\cdot v} \right\rangle \fr^{n+1}+\left(1-\lambda\right)\left\langle \frac{\ff^n}{1+\ii\lambda\ep k\cdot v}\right\rangle,
\]
that is, with the equality $\frac{1}{\dt}\left(\frac{\lambda}{1-\lambda}\right)=\frac{1}{\ep^2},$
\begin{equation}
\label{Implicite2}
\frac{\fr^{n+1}-\fr^n}{\dt}=-\frac{1}{\ep^2}\left\langle \frac{\ii\ep \lambda k\cdot v}{1+\ii\lambda\ep k\cdot v} M_\beta\right\rangle\fr^{n+1}-\frac{1}{\dt}\left\langle \frac{\ii\lambda\ep k\cdot v}{1+\ii\lambda\ep k\cdot v}\ff^n\right\rangle.
\end{equation}
 Hence 
 we can write 
 \[
 \frac{\fr^{n+1}-\fr^n}{\dt}=-\left\langle \frac{\left(k\cdot v\right)^2\lambda^2}{1+\lambda^2\ep^2(k\cdot v)^2}M_\beta \right\rangle \fr^{n+1}-\frac{1}{\dt}\left\langle \frac{\ii\lambda\ep k\cdot v}{1+\ii\lambda\ep k\cdot v}\ff^n\right\rangle,
 \]
 and so,  when $\ep\to 0$ with a fixed $\Delta t$, the scheme degenerates into
\[
\frac{\fr^{n+1}-\fr^n}{\Delta t}\underset{\ep\to 0}=-\left\langle (k\cdot v)^2M_\beta(v)\right\rangle \fr^{n+1},
\]
that is an implicit scheme for  (\ref{Diff_eq}) in Fourier variable. 

\qquad \end{proof}

Now we consider the case of an anomalous diffusion scaling $0<\alpha<2$. In this case, 
the equilibrium $M_\beta$ is heavy-tailed and 
$D=\left\langle v\otimes vM_\beta(v)\right\rangle =+\infty$. However, we have  
$$
\lim\limits_{\ep\to 0}\frac{1}{\ep^\alp}\left\langle \frac{\ep^2\left(k\cdot v\right)^2\lambda^2}{1+\lambda^2\ep^2(k\cdot v)^2}M_\beta(v) \right\rangle \fr^{n+1}=\left| k\right|^\alp \kappa, 
$$ 
with $\kappa$ defined in (\ref{kappa}). Unfortunately a discretization in velocity of the integral involved in this limit does not see the heavy tail of the equilibrium, which means that the effect of large velocities is completely missed. 
In fact, the limit when $\ep$ goes to zero of the implicit kinetic scheme we wrote in Prop. \ref{PropFullImplicitDiff} degenerates into the following scheme 
$$
\fr^{n+1} = \fr^n, \;\; \ff^{n+1} = \fr^{n+1}M_\beta, 
$$
which is not the correct asymptotics. 
Hence we have to transform its expression to make the right limit clearly appear in the scheme. 
The scheme is presented in the following proposition. 

\begin{proposition}
\label{PropFullImplicitDiffAN}
In the case \ref{HypDiffAN}  (anomalous diffusion scaling), we consider 
the following scheme defined for all $k$ and for all time index $0\le n\le N, N\dt=T$ by
\[
\left\{
\begin{array}{l}
\displaystyle
\fr^{n+1}(k)=\frac{\displaystyle\left\langle \frac{\ff^n}{1+\ii\lambda\ep k\cdot v} \right\rangle}{\displaystyle\left\langle \frac{M_\beta}{1+\ii\lambda\ep k\cdot v} \right\rangle +\frac{\left( \ep \lambda |k|\right)^\alp}{1-\lambda}\left\langle    \frac{m}{\left( \ep \lambda |k| \right)^{\alp+d}+|v|^{\alp+d}} \frac{\left( v\cdot  e \right)^2}{ 1+\left( v\cdot e \right)^2}     \right\rangle   } \\
\displaystyle
\ff^{n+1}=\frac{1}{1+\ii\lambda\ep k\cdot v} \left[ (1-\lambda) \ff^n+\lambda \fr^{n+1}M_\beta\right],
\end{array}
\right.
\]
with $\lambda=\dt/(\varepsilon^\alpha+\dt)$, where $e$ is any unitary vector and with the initial condition 
$\ff^0(k,v)=\ff_0(k,v)$. 
This scheme has the following properties:
\begin{romannum}
\item The scheme is of order $1$ for any fixed $\ep$ and preserves the total mass 
\[
\forall n\in[\![1,N]\!], \fr^n(0)=\fr^0(0).
\]
\item The scheme is AP: 
for a fixed $\dt$, the scheme solves the anomalous diffusion equation when $\ep$ goes to zero 
	\begin{equation}
	\label{EulerDiffANFour}
\frac{\fr^{n+1}(k)-\fr^{n}(k)}{\Delta t}=-\kappa |k|^\alp \fr^{n+1}(k),
\end{equation}
where $\kappa$ has been defined by (\ref{kappa}).
\end{romannum}
\end{proposition}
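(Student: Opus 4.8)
The plan is to observe that, although the scheme of Proposition~\ref{PropFullImplicitDiffAN} is \emph{written} differently, its update for $\fr^{n+1}$ is \emph{algebraically identical} to the one produced by the fully implicit discretization \eqref{fnplus1}--\eqref{rhonplus1} with $\alp\in(0,2)$ in place of $2$ in \eqref{lambda}; the rewriting only exposes the heavy tail so that a velocity quadrature can capture it. The key identity to establish is, for $k\neq 0$ and the model equilibrium $M_\beta(v)=m/(1+|v|^\beta)$ with $\beta=\alp+d$ (the general tail \eqref{Mbeta} being handled as in Proposition~\ref{PropTheoriqueDiff}),
\[
\left\langle \frac{\ii \lambda\ep k\cdot v\, M_\beta}{1+\ii \lambda\ep k\cdot v} \right\rangle
=\left(\ep \lambda |k|\right)^\alp\left\langle \frac{m}{\left(\ep \lambda |k|\right)^{\alp+d}+|v|^{\alp+d}}\,\frac{\left(v\cdot e\right)^2}{1+\left(v\cdot e\right)^2}\right\rangle .
\]
First I would use that $M_\beta$ is even to drop the odd-in-$v$ (imaginary) part, writing $\tfrac{\ii z}{1+\ii z}=\tfrac{z^2+\ii z}{1+z^2}$ with $z=\lambda\ep k\cdot v$, which leaves $\langle \tfrac{(\lambda\ep k\cdot v)^2}{1+(\lambda\ep k\cdot v)^2}M_\beta\rangle$; then the change of variables $w=\ep\lambda|k|v$ converts this into $(\ep\lambda|k|)^{\beta-d}\int_{\mathbb{R}^d}\tfrac{(\hat k\cdot w)^2}{1+(\hat k\cdot w)^2}\tfrac{m}{(\ep\lambda|k|)^\beta+|w|^\beta}\dd w$ with $\hat k=k/|k|$; and finally rotational invariance of the $w$-integral allows replacing $\hat k$ by an arbitrary unit vector $e$, giving the right-hand side. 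The case $k=0$ is trivial, both sides vanishing, the $e$-integral being finite because $\alp\in(0,2)$.

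Granting this identity, (i) is immediate: the scheme coincides with the implicit Euler step \eqref{fnplus1} applied to the Fourier transform of \eqref{KinDiff}, hence is consistent of order one for fixed $\ep$, exactly as in Proposition~\ref{PropFullImplicitDiff}(i); mass conservation follows by setting $k=0$ in the formula for $\fr^{n+1}$, where the denominator reduces to $\langle M_\beta\rangle=1$ and the numerator to $\langle\ff^n\rangle=\fr^n(0)$, so $\fr^{n+1}(0)=\fr^n(0)$ for all $n$.

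For (ii) I would reproduce the computation of Proposition~\ref{PropFullImplicitDiff}(ii) with the tail-sensitive form plugged in. Using $\langle M_\beta\rangle=1$, the denominator in $\fr^{n+1}$ equals $1+\tfrac{\lambda}{1-\lambda}J$ with $J:=\langle\tfrac{\ii\lambda\ep k\cdot v M_\beta}{1+\ii\lambda\ep k\cdot v}\rangle$; combining this with $\fr^n=\langle\ff^n\rangle$ and $\tfrac{1}{\dt}\tfrac{\lambda}{1-\lambda}=\tfrac{1}{\ep^\alp}$ (from \eqref{lambda}) gives, as in \eqref{Implicite2},
\[
\frac{\fr^{n+1}-\fr^n}{\dt}=-\frac{1}{\ep^\alp}\,J\,\fr^{n+1}-\frac{1}{\dt}\left\langle\frac{\ii\lambda\ep k\cdot v}{1+\ii\lambda\ep k\cdot v}\ff^n\right\rangle .
\]
Substituting the identity above, $\tfrac{1}{\ep^\alp}J=\lambda^\alp|k|^\alp\, I(\ep\lambda|k|)$ with $I(\eta):=\langle \tfrac{m}{\eta^{\alp+d}+|v|^{\alp+d}}\tfrac{(v\cdot e)^2}{1+(v\cdot e)^2}\rangle$, and letting $\ep\to0$ with $\dt$ fixed: $\lambda\to1$; $I(\ep\lambda|k|)\to I(0)=\kappa$ by monotone convergence (the integrand increases to $\tfrac{m}{|v|^{\alp+d}}\tfrac{(v\cdot e)^2}{1+(v\cdot e)^2}$, whose integral is finite and equals $\kappa$ precisely because $\alp\in(0,2)$); and the last term vanishes by dominated convergence, since $|\tfrac{\ii\lambda\ep k\cdot v}{1+\ii\lambda\ep k\cdot v}\ff^n|\le|\ff^n|\in L^1_v$, the bound $\|\ff^n\|_{L^1_v}\le\|\ff_0\|_{L^1_v}$ being uniform in $\ep$ by an induction on $n$ using \eqref{fnplus1} (the denominator $1+\tfrac{\lambda}{1-\lambda}J\ge1$ gives $|\fr^{n+1}|\le\|\ff^n\|_{L^1_v}$, then $\|\ff^{n+1}\|_{L^1_v}\le(1-\lambda)\|\ff^n\|_{L^1_v}+\lambda|\fr^{n+1}|\le\|\ff^n\|_{L^1_v}$). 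This yields $\tfrac{\fr^{n+1}-\fr^n}{\dt}=-\kappa|k|^\alp\fr^{n+1}$, i.e.\ \eqref{EulerDiffANFour}, the implicit Euler scheme for \eqref{DiffAN_Eq_Four}; with (i) this is the AP property.

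The main obstacle is the algebraic identity itself — certifying that the modification changes only the velocity discretization, not the scheme — after which everything reduces to a transcription of the diffusion-case proof. The secondary points requiring care are the two limit passages (the monotone convergence $I(\ep\lambda|k|)\to\kappa$, where the restriction $\alp\in(0,2)$ enters, and the $\ep$-uniform $L^1_v$ bound on $\ff^n$ for the dominated convergence) and the consistency of the rewritten formula at $k=0$.
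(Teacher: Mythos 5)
Your proposal is correct and follows essentially the same route as the paper's proof: drop the odd part of the velocity integral by evenness, perform the change of variables $w=\ep\lambda|k|v$ and use rotational invariance to obtain the tail-exposing identity, then rewrite the update in the form \eqref{Implicite3} and pass to the limit $\ep\to 0$ at fixed $\dt$. The only point to tighten is your dominated-convergence step for the remainder term, since the dominating function $|\ff^n|$ itself depends on $\ep$; this is repaired by the $\ep$-independent pointwise bound $|\ff^n|\le |\ff_0|+\|\ff_0\|_{L^1_v}M_\beta$, which follows from the same induction you already use for the $L^1_v$ estimate (the paper itself leaves this passage at ``we easily observe'').
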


\begin{proof}
As (i) is straightforward, let us prove (ii).
We start by considering the expression of $\fr^{n+1}$ in the scheme of Prop. \ref{PropFullImplicitDiff}, and more precisely the integral
\[
\left\langle \frac{\ii  \ep\lambda  k\cdot v~M_\beta}{1+\ii\ep\lambda k\cdot v } \right\rangle
=\int_{\mathbb{R}^d}M_\beta(v)\frac{\ep^2\lambda^2\left( k\cdot v \right)^2}{1+\ep^2\lambda^2\left( k\cdot v \right)^2}\dd v. 
\]
As in the previous section, we perform the change of variables $w=\ep\lambda|k| v$ to obtain 
\begin{align}
\int_{\mathbb{R}^d}M_\beta(v)\frac{\ep^2\lambda^2\left( k\cdot v \right)^2}{1+\ep^2\lambda^2\left( k\cdot v \right)^2}\dd v
&=\left( \ep\lambda |k|\right)^{-d} \int_{\mathbb{R}^d}M_\beta\left( \frac{w}{\ep \lambda|k|} \right)
\frac{\left( w\cdot \frac{k}{|k|} \right)^2}{ 1+\left( w\cdot \frac{k}{|k|} \right)^2}\dd w, \label{CalculDiffAN_2} \\
&=\left( \ep \lambda|k|\right)^{-d} \int_{\mathbb{R}^d} M_\beta\left( \frac{w}{\ep \lambda|k|} \right)\frac{\left( w\cdot  e \right)^2}{ 1+\left( w\cdot e \right)^2}\dd w,  \nonumber 
\end{align}
with $e=k/|k|$; note that the last integral does not depend on this unitary vector $e$ thanks to 
its rotational invariance. 

We consider explicitly the case  $M_\beta(v)=\frac{m}{1+|v|^{\alp+d}}$ with $m$ such that 
$\langle M_\beta\rangle=1$ and with $\alp\in]0,2[$. We have
\[
\left\langle M_\beta \frac{ \ep^2\lambda^2\left( k\cdot v\right)^2}{1+\ep^2\lambda^2\left( k\cdot v \right)^2} \right\rangle
=\left( \ep \lambda |k|\right)^\alp \int_{\mathbb{R}^d} \frac{m}{\left( \ep \lambda |k| \right)^{\alp+d}+|w|^{\alp+d}} \frac{\left( w\cdot  e \right)^2}{ 1+\left( w\cdot e \right)^2}\dd w,
\]
where $e$ is any unitary vector. Inserting this formula in \eqref{S1}, 
we get the expression for $\fr^{n+1}$. The equation satisfied by $\ff^{n+1}$ in Prop. \ref{PropFullImplicitDiff} 
is unchanged, then the numerical scheme of the proposition is derived. 
Now, it remains to show that this scheme enjoys the AP property. 
From the following form of the equation on $\fr^{n+1}$
\begin{equation}
\label{Implicite3}
\frac{\fr^{n+1}-\fr^n}{\dt}=-\lambda^\alp |k|^\alp \left\langle \frac{m}{\left( \ep \lambda |k| \right)^{\alp+d}+|v|^{\alp+d}} \frac{\left( v\cdot  e \right)^2}{ 1+\left( v\cdot e \right)^2}\right\rangle\fr^{n+1}-\left\langle \frac{\ii\lambda\ep k\cdot v}{1+\ii\lambda\ep k\cdot v}\frac{\ff^n}{\dt}\right\rangle,
\end{equation}
we easily observe that it degenerates when $\ep$ goes to zero to an implicit scheme for the anomalous diffusion equation. This concludes the proof.

\qquad \end{proof}

\subsection{Scheme based on a micro-macro decomposition}
\label{SectionMMscheme}
In the previous part, we constructed a fully implicit scheme enjoying the AP property. The implicit character of the transport may induce a high computational cost. Therefore, we propose another scheme, which is 
based on a micro-macro decomposition of the kinetic equation and in which the transport part is explicit in time. 
In the diffusion case, such a scheme has been set in \cite{LemouMieussens}. 
We first recall here the way to derive it in the diffusion case. Then, we consider 
the case of the anomalous diffusion and show how to take into account the effect of the large velocities 
induced by the heavy-tailed structure of the equilibrium $M_\beta(v)$. 

The distribution $f$ is decomposed as 
\[
f=\left\langle f\right\rangle M_\beta+g=\rho M_\beta+g,
\]
where 
$\left\langle g\right\rangle=0$. Injecting this decomposition into (\ref{KinDiff}), we have 
\begin{equation}
\label{mm1}
\partial_t \rho \, M_\beta+\partial_t g+\frac{\ep}{\ep^\alp}v\cdot \nabla_x\rho \, M_\beta+\frac{\ep}{\ep^\alp}v\cdot \nabla_x g=-\frac{1}{\ep^\alp}g.
\end{equation}
To derive an equation on $\rho$, we integrate with respect to $v$ to obtain 
\begin{equation}
\label{mm1_part1}
\partial_t\rho+\frac{\ep}{\ep^\alp}\left\langle v\cdot \nabla_x g\right\rangle =0.
\end{equation}
Now we replace $\partial_t \rho$ in  (\ref{mm1}) to get an equation satisfied by $g$
\begin{equation}
\label{mm1_part2}
\partial_t g+\frac{\ep}{\ep^\alp}v\cdot\nabla_x\rho \, M_\beta+\frac{\ep}{\ep^\alp}\left( v\cdot \nabla_xg-\left\langle v\cdot \nabla_xg\right\rangle M_\beta  \right)=-\frac{1}{\ep^\alp}g.
\end{equation}
From this formulation 
the semi-implicit scheme proposed in \cite{LemouMieussens} writes 
\begin{equation}
\label{mmschemeDiff}
\left\{
\begin{matrix}
\displaystyle
\frac{\rho^{n+1}-\rho^n}{\dt}+\frac{\ep}{\ep^\alp}\left\langle v\cdot \nabla_x g^{n+1} \right\rangle=0 \\
\displaystyle
\frac{g^{n+1}-g^n}{\dt}+\frac{\ep}{\ep^\alp}v\cdot \nabla_x \rho^n M_\beta+\frac{\ep}{\ep^\alp}\left( v\cdot \nabla_x g^n-\left\langle v\cdot \nabla_x g^n \right\rangle M_\beta \right)=-\frac{1}{\ep^\alp}g^{n+1},
\end{matrix}
\right.
\end{equation}
and we have the following proposition
\begin{proposition}
\label{PropMicroMacroDiff}
In case \ref{HypDiff} (diffusion scaling), we consider the following scheme defined for all $x\in \Omega$, $v\in V$ and all time index $0\le n\le N$ by 
\[
\left\{
\begin{array}{l}
\displaystyle
\frac{\rho^{n+1}-\rho^n}{\dt}+\frac{1}{\ep}\left\langle v\cdot \nabla_x g^{n+1} \right\rangle=0 \\
\displaystyle
\frac{g^{n+1}-g^n}{\dt}+\frac{1}{\ep}v\cdot \nabla_x \rho^n M_\beta+\frac{1}{\ep}\left( v\cdot \nabla_x g^n-\left\langle v\cdot \nabla_x g^n \right\rangle M_\beta \right)=-\frac{1}{\ep^2}g^{n+1},\end{array}
\right.
\]
with initial conditions $\rho^0(x)=\rho(0,x)$ and $g^0(x,v)=f(0,x,v)-\rho(0,x)M_\beta(v)$. 
This scheme has the following properties:
\begin{romannum}
\item The scheme is of order $1$ for any fixed $\ep$ and preserves the total mass
\[
\forall n\in[\![1,N]\!], \fr^n(0)=\fr^0(0).
\]
\item The scheme is AP: 
for a fixed $\dt$, the scheme solves the diffusion equation when $\ep$ goes to zero 
$$
\frac{\fr^{n+1}(k)-\fr^n(k)}{\Delta t}=-\left\langle (k\cdot v)^2 M_\beta(v)\right\rangle\fr^{n}(k).
$$
\end{romannum}

\end{proposition}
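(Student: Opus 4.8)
The plan is to dispose of (i) quickly and to devote the main effort to the asymptotic limit (ii), which I would obtain by letting $\ep\to 0$ successively in the two equations of the scheme, first in the equation for $g^{n+1}$ and then in the equation for $\rho^{n+1}$.

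For (i), I would note that the scheme is precisely the time discretization \eqref{mmschemeDiff} (with $\alpha=2$) of the micro-macro reformulation \eqref{mm1_part1}--\eqref{mm1_part2} of \eqref{KinDiff}; a Taylor expansion around $t_n$ at fixed $\ep$ then gives an $O(\dt)$ truncation error, hence first order. For the mass conservation I would integrate the second equation of the scheme with respect to $v$: using $\left\langle M_\beta\right\rangle=1$, $\left\langle vM_\beta\right\rangle=0$, and the fact that $v\cdot\nabla_x g^n-\left\langle v\cdot\nabla_x g^n\right\rangle M_\beta$ has zero $v$-average by construction, this yields $\left(1+\dt/\ep^2\right)\left\langle g^{n+1}\right\rangle=\left\langle g^n\right\rangle$, so that $\left\langle g^0\right\rangle=0$ forces $\left\langle g^n\right\rangle=0$ for all $n$; the first equation of the scheme read in Fourier at $k=0$ then reduces to $\fr^{n+1}(0)=\fr^n(0)$, which is the announced conservation.

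For (ii), I would start from the second equation of the scheme, solve it for $g^{n+1}$, and divide by $\ep$ to get
\begin{equation*}
\frac{g^{n+1}}{\ep}=\frac{\ep^2/\dt}{1+\ep^2/\dt}\,\frac{g^n}{\ep}-\frac{1}{1+\ep^2/\dt}\left(v\cdot\nabla_x\rho^n\,M_\beta+v\cdot\nabla_x g^n-\left\langle v\cdot\nabla_x g^n\right\rangle M_\beta\right).
\end{equation*}
From this identity, an induction on $n$ (at fixed $\dt$) starting at $n=1$ shows that $g^n=O(\ep)$ and that $g^n/\ep$ stays bounded as $\ep\to 0$: the prefactor $1/(1+\ep^2/\dt)$ is bounded, the initial datum $g^0$ is fixed so the right-hand side is $O(1)$ at $n=0$, and the first term on the right is $\frac{\ep^2/\dt}{1+\ep^2/\dt}$ times a bounded quantity, hence tends to $0$. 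Passing to the limit $\ep\to 0$, the first term disappears and, for $n\ge1$, the terms involving $g^n$ vanish as well (since $g^n\to 0$); writing $\bar\rho^n=\lim_{\ep\to0}\rho^n$ and $G^{n+1}=\lim_{\ep\to0}g^{n+1}/\ep$, one obtains $G^{n+1}=-\,v\cdot\nabla_x\bar\rho^n\,M_\beta$.

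Finally, I would rewrite the first equation of the scheme as $\left(\rho^{n+1}-\rho^n\right)/\dt=-\left\langle v\cdot\nabla_x(g^{n+1}/\ep)\right\rangle$ and let $\ep\to 0$, which gives, for $n\ge1$,
\begin{equation*}
\frac{\bar\rho^{n+1}-\bar\rho^n}{\dt}=\left\langle (v\cdot\nabla_x)(v\cdot\nabla_x\bar\rho^n)\,M_\beta\right\rangle=\nabla_x\cdot\left(D\nabla_x\bar\rho^n\right),\qquad D=\left\langle v\otimes v\,M_\beta\right\rangle,
\end{equation*}
that is, in Fourier variable, $\left(\fr^{n+1}-\fr^n\right)/\dt=-\left\langle(k\cdot v)^2M_\beta\right\rangle\fr^n$: an explicit Euler discretization of the diffusion equation \eqref{Diff_eq}, which is exactly the claimed AP limit. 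The main obstacle is the induction giving an $\ep$-uniform bound on $g^n/\ep$: this is where the implicit treatment of the relaxation term $-g^{n+1}/\ep^2$ is essential, through the damping factor $(1+\ep^2/\dt)^{-1}$, and where the finiteness of the second moment $\left\langle v\otimes v\,M_\beta\right\rangle$ of Case~\ref{HypDiff}, together with enough spatial regularity of $\rho$ and $g$, is needed so that the velocity averages above are well defined and the limit may be taken under the integral sign. (The first step $n=0$ carries a harmless initial-layer correction because $g^0$ need not be of order $\ep$.)
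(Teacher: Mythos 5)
Your argument is correct and is essentially the standard micro--macro AP argument: the paper gives no proof of Proposition \ref{PropMicroMacroDiff} (it refers to \cite{LemouMieussens}), but your route --- solve the $g$-equation, use the implicit relaxation factor $(1+\dt/\ep^2)^{-1}$ to get $g^{n+1}=O(\ep)$ at fixed $\dt$, then insert $g^{n+1}/\ep$ into the $\rho$-equation --- is exactly the strategy the paper itself employs for the anomalous-diffusion analogue in Prop.~\ref{PropMicroMacroDiffAN}, and it correctly yields the explicit Euler limit $\frac{\fr^{n+1}-\fr^n}{\dt}=-\langle (k\cdot v)^2M_\beta\rangle\fr^{n}$. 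Two cosmetic points: in (i) the detour through $\langle g^n\rangle=0$ is not needed, since the $k=0$ mode of the first equation alone gives $\fr^{n+1}(0)=\fr^n(0)$; and your closing remark about the $O(1)$ contribution of $g^0$ at the very first step (absent for well-prepared data) is the right caveat at the formal level of rigor adopted throughout the paper.
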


For the proof, we refer to \cite{LemouMieussens}.

We now consider the anomalous diffusion scaling. Formulation (\ref{mmschemeDiff}) does not work because the fractional laplacian does not appear when $\ep$ goes to $0$, so we have to modify the micro-macro scheme. As $M_\beta$ is even we have
\[
\left\langle v\cdot\nabla_x g^{n+1}\right\rangle = \left\langle v\cdot\nabla_x \rho^{n+1}M_\beta\right\rangle+\left\langle v\cdot\nabla_x g^{n+1}\right\rangle =\left\langle v\cdot\nabla_x f^{n+1}\right\rangle,
\]
and so the first equation of  (\ref{mmschemeDiff}) can be rewritten as
\[
\frac{\rho^{n+1}-\rho^n}{\dt}+\frac{\ep}{\ep^\alp}\left\langle v\cdot \nabla_x f^{n+1}\right\rangle =0.
\]
 Now we use an implicit formulation of the kinetic equation to express $f^{n+1}$
\[
\ep^\alp \frac{f^{n+1}-f^n}{\dt}+\ep v\cdot\nabla_x f^{n+1}=\rho^{n+1}M_\beta-f^{n+1}.
\]
As we did in the previous part we introduce the variable $\lambda$ defined by (\ref{lambda}) 
and write this last equation as  
\[
\left( I+\ep\lambda v\cdot \nabla_x \right) f^{n+1}=\lambda \rho^{n+1}M_\beta+(1-\lambda)f^n,
\]
that is
\[
f^{n+1}=\lambda\left(I+\ep\lambda v\cdot\nabla_x\right)^{-1}\rho^{n+1}M_\beta+(1-\lambda)\left(I+\ep\lambda v\cdot \nabla_x\right)^{-1}f^{n}.
\]
As $\lambda\underset{\dt\to 0}= O(\dt)$, we will use the following approximated expression for $f^{n+1}$
\[
f^{n+1}=\lambda\left(I+\ep\lambda v\cdot\nabla_x\right)^{-1}\rho^{n+1}M_\beta+(1-\lambda)\left(I-\ep\lambda v\cdot \nabla_x\right)f^{n}+O(\dt),
\]
to avoid a costly inversion of the transport operator. In this expression, the terms we removed also 
vanish when $\ep$ goes to zero. 
The scheme  presented in Prop. \ref{PropMicroMacroDiff} can be rewritten in the anomalous diffusion limit as 
\begin{equation}
\label{mmscheme2}
\left\{
\begin{array}{l l l}
\displaystyle
\frac{\rho^{n+1}-\rho^n}{\dt}&\displaystyle+\frac{\ep}{\ep^\alp}\lambda\left\langle v\cdot \nabla_x \left(  I+\ep\lambda v\cdot \nabla_x  \right)^{-1}\rho^{n+1}M_\beta \right\rangle&  \\
&\displaystyle+\frac{\ep}{\ep^\alp}(1-\lambda)\left\langle v\cdot \nabla_x \left(  I-\ep\lambda v\cdot \nabla_x  \right)f^{n} \right\rangle=0,&  \\
\displaystyle
\frac{g^{n+1}-g^n}{\dt}&+\frac{\ep}{\ep^\alp}v\cdot \nabla_x \rho^n M_\beta+\frac{\ep}{\ep^\alp}\left( v\cdot \nabla_x g^n-\left\langle v\cdot \nabla_x g^n \right\rangle M_\beta \right)=-\frac{1}{\ep^\alp}g^{n+1}.&
\end{array}
\right.
\end{equation}
We can make additional simplifications. First, we remark that, 
$$
\left\langle v\cdot \nabla_x \left(  I-\ep\lambda v\cdot \nabla_x  \right)f^{n} \right\rangle = \langle v\cdot \nabla_x f^n\rangle + O(\Delta t) =  \langle v\cdot \nabla_x g^n\rangle + O(\Delta t),  
$$
where we used $\langle vM_\beta\rangle = 0$ and $\lambda=O(\Delta t)$ at fixed $\varepsilon$. 
Then, the last term in the first line of the scheme \eqref{mmscheme2} can be replaced by 
$\ep^{1-\alp}(1-\lambda)\left\langle v\cdot\nabla_x g^n\right\rangle$. 
Second, we will reformulate the second term of the first line of the scheme \eqref{mmscheme2} 
(which produces the fractional diffusion when $\ep$ goes to zero) in order 
to get the right limit when $\varepsilon\rightarrow 0$. 
To simplify the presentation, we present the reformulation using the Fourier variable
(because of the non-local character of the fractional laplacian in the space coordinates, the case in these coordinates is more delicate). 
The second term of (\ref{mmscheme2}) writes in the Fourier variable
\begin{align*}
\frac{\ep}{\ep^\alp}\lambda \left\langle \frac{\ii k\cdot v}{1+\ii \ep \lambda k\cdot v}M_\beta\right\rangle\fr^{n+1}(k)&=\frac{1}{\ep^\alp}\left\langle \frac{\ep^2\lambda^2 \left(k\cdot v\right)^2}{1+\ep^2\lambda^2(k\cdot v)^2} M_\beta\right\rangle \fr^{n+1}(k). 
\end{align*}
Then, as in (\ref{CalculDiffAN_2}) we make the change of variables $w=\ep\lambda |k| v$ 
in the velocity integration to get the following semi-discrete scheme (with $M_\beta(v)=\frac{m}{1+|v|^\beta}$)  
\begin{equation}
\label{mmscheme3}
\left\{
\begin{array}{l l l}
&\displaystyle\frac{\rho^{n+1}-\rho^n}{\dt}
&\displaystyle+
\lambda^\alp \mathcal{F}^{-1}\left(|k|^\alp
\left\langle 
\frac{(v\cdot e)^2}{1+(v\cdot e)^2}\frac{m}{(\ep\lambda |k|)^\beta+|v|^\beta}
\right\rangle \fr^{n+1}(k)
\right) \\
& &\displaystyle+\frac{\ep}{\ep^\alp+\dt}\left\langle v\cdot\nabla_x g^n\right\rangle=0,  \\
&\displaystyle\frac{g^{n+1}-g^n}{\dt}&\displaystyle+\frac{\ep}{\ep^\alp}v\cdot \nabla_x \rho^n M_\beta+\frac{\ep}{\ep^\alp}\left( v\cdot \nabla_x g^n-\left\langle v\cdot \nabla_x g^n \right\rangle M_\beta \right)=-\frac{1}{\ep^\alp}g^{n+1},
\end{array}
\right.
\end{equation}
where $e$ is any unitary vector and where we denoted by $\mathcal{F}^{-1}$ the inverse of the Fourier transform. The following proposition 
summarizes the main properties of this micro-macro scheme:

\begin{proposition}
\label{PropMicroMacroDiffAN}
In case \ref{HypDiffAN} (anomalous diffusion scaling), we introduce the following micro-macro the scheme defined for all $x\in \Omega$, $v\in \mathbb{R}^d$ and all time index $0\le n\le N, N\dt=T$ by
\[
\left\{
\begin{array}{l l l}
&\displaystyle\frac{\rho^{n+1}-\rho^n}{\dt}
&\displaystyle+
\lambda^\alp \mathcal{F}^{-1}\left(|k|^\alp
\left\langle 
\frac{(v\cdot e)^2}{1+(v\cdot e)^2}\frac{m}{(\ep\lambda |k|)^\beta+|v|^\beta}
\right\rangle \fr^{n+1}(k)
\right) \\
& &\displaystyle+\frac{\ep}{\ep^\alp+\dt}\left\langle v\cdot\nabla_x g^n\right\rangle=0 \\
\displaystyle
&\displaystyle\frac{g^{n+1}-g^n}{\dt}
&\displaystyle+\frac{\ep}{\ep^\alp}v\cdot \nabla_x \rho^n M_\beta+\frac{\ep}{\ep^\alp}\left( v\cdot \nabla_x g^n-\left\langle v\cdot \nabla_x g^n \right\rangle M_\beta \right)=-\frac{1}{\ep^\alp}g^{n+1},
\end{array}
\right.
\]
where $e$ is any unitary vector, and where initial conditions are $\rho^0(x)=\rho(0,x)$ and $g^0(x,v)=f(0,x,v)-\rho(0,x)M_\beta$. This scheme has the following properties:
\begin{romannum}
\item The scheme is of order $1$ for any fixed $\ep$ and preserves the total mass
\[
\forall n\in[\![1,N]\!], \fr^n(0)=\fr^0(0).
\]
\item The scheme is AP: 
for a fixed $\dt$, the scheme solves the diffusion equation when $\ep$ goes to zero 
	\[
\frac{\fr^{n+1}(k)-\fr^{n}(k)}{\Delta t}=-\kappa |k|^\alp \fr^{n+1}(k),
\]
where $\kappa$ is defined by (\ref{kappa}).
\end{romannum}

\end{proposition}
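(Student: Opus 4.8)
The plan is to establish the two items separately, with item~(i) essentially inherited from the construction and item~(ii) requiring the substantive work. For~(i), I would observe that the scheme of the proposition is a first-order-in-time approximation of the \emph{exact} coupled system~\eqref{mm1_part1}--\eqref{mm1_part2}: it is obtained from the consistent semi-implicit micro-macro scheme~\eqref{mmschemeDiff} (whose consistency is proved in \cite{LemouMieussens}) by a chain of modifications of the macroscopic flux that each perturb it only by $O(\dt)$ at fixed $\ep$ --- the exact identity $\langle v\cdot\nabla_x g^{n+1}\rangle=\langle v\cdot\nabla_x f^{n+1}\rangle$ (using $\langle vM_\beta\rangle=0$), the substitution of $f^{n+1}$ by the implicit kinetic update $(I+\ep\lambda v\cdot\nabla_x)f^{n+1}=\lambda\fr^{n+1}M_\beta+(1-\lambda)f^n$, the replacement of $(I+\ep\lambda v\cdot\nabla_x)^{-1}$ acting on $f^n$ by $I-\ep\lambda v\cdot\nabla_x$, the exact identity $\tfrac{\ep}{\ep^\alp}(1-\lambda)=\tfrac{\ep}{\ep^\alp+\dt}$, and the exact change of variables $w=\ep\lambda|k|v$ already used in~\eqref{CalculDiffAN_2} (recall $\lambda=O(\dt)$ at fixed $\ep$). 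Mass conservation is immediate from the first equation at $k=0$: the diffusion term carries a factor $|k|^\alp$ that vanishes there, and $\langle v\cdot\nabla_x g^n\rangle=\nabla_x\cdot\langle vg^n\rangle$ is a spatial divergence whose zero Fourier mode vanishes, so $\fr^{n+1}(0)=\fr^n(0)$.

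For~(ii) I fix $\dt$ and a Fourier mode $k$ and let $\ep\to0$, so that $\lambda=\dt/(\ep^\alp+\dt)\to1$. I first pass to the limit in the equation for $g$: rewriting it as
\[
g^{n+1}=\frac{\ep^\alp}{\ep^\alp+\dt}\,g^n-\frac{\ep\,\dt}{\ep^\alp+\dt}\Big(v\cdot\nabla_x\rho^nM_\beta+v\cdot\nabla_x g^n-\langle v\cdot\nabla_x g^n\rangle M_\beta\Big),
\]
both prefactors tend to $0$, so $g^{n+1}\to0$ for any bounded $g^n$; a straightforward induction over the finitely many indices $n\le N$ shows that the $g^n$ stay bounded uniformly in $\ep$ and that $g^n\to0$ for $n\ge1$. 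Consequently the coupling term $\tfrac{\ep}{\ep^\alp+\dt}\langle v\cdot\nabla_x g^n\rangle$ of the macroscopic equation tends to $0$, its prefactor vanishing while $\langle v\cdot\nabla_x g^n\rangle$ stays bounded.

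The crucial step is the limit of the diffusion term $\lambda^\alp\mathcal F^{-1}\big(|k|^\alp\big\langle\tfrac{(v\cdot e)^2}{1+(v\cdot e)^2}\tfrac{m}{(\ep\lambda|k|)^\beta+|v|^\beta}\big\rangle\fr^{n+1}(k)\big)$. Since the second moment of $M_\beta$ is infinite, one cannot Taylor-expand; instead one exploits that the change of variables $w=\ep\lambda|k|v$ has produced the \emph{regularized} kernel $\tfrac{m}{(\ep\lambda|k|)^\beta+|v|^\beta}$. For $k\ne0$, $\lambda^\alp\to1$ and $(\ep\lambda|k|)^\beta\to0$; the integrand converges pointwise (for $v\ne0$) to $\tfrac{(v\cdot e)^2}{1+(v\cdot e)^2}\tfrac{m}{|v|^\beta}$ and, since $(\ep\lambda|k|)^\beta\ge0$, is bounded by it uniformly in $\ep$. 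This dominating function is integrable precisely because $\beta\in(d,d+2)$: at infinity because $\beta>d$ and the first factor is bounded, near the origin because there the integrand behaves like $|v|^{2-\beta}$ with $2-\beta>-d$. Dominated convergence then gives $\big\langle\tfrac{(v\cdot e)^2}{1+(v\cdot e)^2}\tfrac{m}{(\ep\lambda|k|)^\beta+|v|^\beta}\big\rangle\to\int_{\mathbb R^d}\tfrac{(v\cdot e)^2}{1+(v\cdot e)^2}\tfrac{m}{|v|^\beta}\dd v=\kappa$ for any unit vector $e$ (recall $\beta=\alp+d$ and $\kappa$ as in~\eqref{kappa}), while for $k=0$ the whole term vanishes by the $|k|^\alp$ factor. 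Hence the diffusion term tends to $\kappa\,\mathcal F^{-1}\big(|k|^\alp\fr^{n+1}(k)\big)$. Collecting the three limits, the macroscopic equation degenerates, as $\ep\to0$ at fixed $\dt$, into $\tfrac{\rho^{n+1}-\rho^n}{\dt}+\kappa\,\mathcal F^{-1}\big(|k|^\alp\fr^{n+1}(k)\big)=0$, i.e. $\tfrac{\fr^{n+1}(k)-\fr^n(k)}{\dt}=-\kappa|k|^\alp\fr^{n+1}(k)$, a backward-Euler discretization of the anomalous diffusion equation~\eqref{DiffAN_Eq_Four}; this is the claimed limit scheme, so the scheme is AP.

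The main obstacle is this third step: it is where the heavy-tailed structure of $M_\beta$ is genuinely used, and where resolving the transport operator (rather than the collision operator) in the flux pays off --- the regularized kernel keeps the velocity integral finite and convergent even though $\langle(k\cdot v)^2M_\beta\rangle=+\infty$. The finiteness of $\kappa$ and the validity of the dominated convergence both hinge on the tail window $\beta\in(d,d+2)$; once this is in hand, the rest is bookkeeping, the only delicate point being the uniform-in-$\ep$ boundedness of the $g^n$ used in the first two limits, which follows from the contraction-type recursion above. For a general $M_\beta$ satisfying~\eqref{Mbeta} rather than $M_\beta(v)=m/(1+|v|^\beta)$, the same argument applies after an extra splitting of the kernel, as in the supplementary material.
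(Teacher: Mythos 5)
Your proposal is correct and follows essentially the same route as the paper: show that the $g$-equation forces $g^{n}\to 0$ (so the coupling term $\frac{\ep}{\ep^\alp+\dt}\langle v\cdot\nabla_x g^n\rangle$ vanishes), then pass to the limit in the regularized velocity integral so that $\lambda^\alp\to 1$ and the bracket tends to $\kappa$, yielding the implicit Euler discretization of the anomalous diffusion equation. The only difference is technical: where the paper invokes the expansions $\lambda^\alp=1+O(\ep^\alp)$ and $\frac{m}{(\ep\lambda|k|)^\beta+|v|^\beta}=\frac{m}{|v|^\beta}+O(\ep^{\alp+d})$, you justify the same limit by dominated convergence (using $\beta\in(d,d+2)$ for integrability), which is a slightly more careful but equivalent argument.
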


\begin{proof}
The proof of (i) is immediate, let us prove (ii). We have
when $\ep$ goes to zero with a fixed $\dt$, the scheme solves the anomalous diffusion equation. 
The equation on $g$ gives, when $\ep$ goes to zero. 
\[
g^{n+1}=O(\min(\ep,\ep^\alp)). 
\]
The equation on $\rho$ gives, in the Fourier variable 
\[
\frac{\fr^{n+1}-\fr^n}{\dt}+\lambda^\alp|k|^\alp\left\langle  \frac{(v\cdot e)^2}{1+(v\cdot e)^2}\frac{m}{(\ep\lambda |k|)^\beta+|v|^\beta}\right\rangle\fr^{n+1}
=O(\min(\ep^2,\ep^{1+\alp})). 
\]
Thanks to the following relations,  
$$
\lambda^\alp=\left(\frac{\dt}{\ep^\alp+\dt}\right)^\alp =1+O(\ep^\alp), \; \mbox{ and } \; \frac{m}{(\ep\lambda |k|)^\beta+|v|^\beta}=\frac{m}{|v|^\beta}+O(\ep^{\alp+d}),
$$
we get the implicit discretization of the anomalous diffusion equation (\ref{EulerDiffANFour}) 
when $\varepsilon\rightarrow 0$.
Hence the scheme solves the anomalous diffusion equation when $\ep$ goes to zero with a fixed $\dt$.
\qquad
\end{proof}

\subsection{Scheme based on an integral formulation of the equation}
\label{subsec:duhamelschemes}
In the previous parts, we wrote two AP schemes solving (\ref{KinDiff}) in the cases of the diffusion and anomalous diffusion limits, both of them were of order $1$ in time. Here we present a scheme based on a Duhamel formulation of (\ref{KinDiff}) which has uniform accuracy in $\ep$. 
This approach bears similarities with 
the UGKS scheme (see \cite{Xu, Mieussens}). As in the previous parts, the case of the classical 
diffusion gives easily an AP scheme but the large velocities effects require a specific treatment for the anomalous diffusion case. 

For both diffusion and anomalous diffusion, we start from (\ref{KinLim}) 
\[
\fr(t,k)=\left\langle \e^{-\frac{t}{\ep^\alp}\left( 1+\ii \ep k\cdot v \right)} \fr(0,k,v)\right\rangle +\int_0^{\frac{t}{\ep^\alp}}\e^{-s}\left\langle \e^{-\ii \ep s k\cdot v}M_\beta(v)\right\rangle \fr(t-\ep^\alp s,k)\dd s,
\]
hence evaluating at time $t=t_{n+1}$ leads to 
\begin{equation}
\label{Duhamel}
\fr(t_{n+1},k)=\hat{A}_0(t_{n+1},k) +\sum\limits_{j=0}^n \int_{\frac{t_{j}}{\ep^\alp}}^{\frac{t_{j+1}}{\ep^\alp}}\e^{-s}\left\langle \e^{-\ii \ep s k\cdot v}M_\beta\right\rangle \fr(t_{n+1}-\ep^\alp s,k)\dd s,
\end{equation}
where $\hat{A}_0$ is defined by (\ref{A0Four}).
We perform a quadrature of order $2$ in the integrals. Assuming that the time 
derivatives of $\fr$ are uniformly bounded in $\varepsilon$, we have: \newline$\forall j\in [\![1,N-1]\!]$, 
$\forall s\in \left[\frac{t_j}{\ep^\alp},\frac{t_{j+1}}{\ep^\alp}\right]$,
\begin{equation}
\label{quadrature}
\fr(t_{n+1}-\ep^\alp s,k)=a(\ep,s)\fr(t_{n+1}-t_j,k)+(1-a(\ep,s))\fr(t_{n+1}-t_{j+1},k)+O(\dt^2), 
\end{equation}
uniformly in $\varepsilon$, with $a(\ep,s)=1-\frac{\ep^\alp s-t_j}{\dt}$. 
Inserting \eqref{quadrature} in the integral term of \eqref{Duhamel} leads to 
\begin{align}
\int_{\frac{t_{j}}{\ep^\alp}}^{\frac{t_{j+1}}{\ep^\alp}}\e^{-s}\left\langle \e^{-\ii \ep s k\cdot v}M_\beta\right\rangle \fr(t_{n+1}-\ep^\alp s,k)\dd s&= c_j(k) \fr(t_{n+1}-t_{j},k) 
\nonumber
\\&
+b_j(k) \fr(t_{n+1}-t_{j+1},k) +O\!\left(\dt^2\right),  \label{quadrature2}
\end{align}
uniformly in $\varepsilon$, where we used the following notations $\forall j\in [\![0,N]\!]$
\begin{align}
\label{bj}
b_j(k)&=\int_{\frac{t_j}{\ep^\alp}}^{\frac{t_{j+1}}{\ep^\alp}}\frac{\ep^\alp s-t_j}{\dt}\e^{-s}\left\langle \e^{-\ii \ep s k\cdot v}M_\beta\right\rangle \dd s, \\
\label{cj}
c_j(k)&=\int_{\frac{t_j}{\ep^\alp}}^{\frac{t_{j+1}}{\ep^\alp}}\left( 1-\frac{\ep^\alp s-t_j}{\dt} \right)\e^{-s}\left\langle \e^{-\ii \ep s k\cdot v}M_\beta\right\rangle \dd s.
\end{align}

\subsubsection{AP scheme for diffusion}

We consider the case of the diffusion limit and we use the previous quadrature to write an AP scheme for the kinetic equation \eqref{KinDiff}.
We use the quadrature (\ref{quadrature2}) in (\ref{Duhamel}) to write
\begin{align}
\label{aascheme1}
\fr(t_{n+1},k)&=\hat{A}_0(t_{n+1},k)+\sum\limits_{j=0}^n\left( c_j \fr(t_{n+1-j},k) +b_j \fr(t_{n-j},k) +O\left(\dt^2\right)\right),
\end{align}
and as $n\dt\le T$ then $\sum\limits_{j=0}^n\dt^2=O(\dt)$ and we get a first order scheme that writes 
\[
\fr^{n+1}(k)=\frac{\displaystyle \hat{A}_0(t_{n+1},k)+\sum\limits_{j=1}^n \left( c_j \fr^{n+1-j}(k)+b_j\fr^{n-j}(k) \right) +b_0\fr^n(k)}{\displaystyle1-c_0}.
\]
To ensure the AP property, the time integration in the last two terms are computed exactly to get  
\begin{align}
\label{bj_diff}
b_j(k)&=\left\langle \left( \frac{\ep^\alp}{\dt} \frac{\e^{-\frac{t_j}{\ep^\alp} (1+\ep\ii k\cdot v)}\left( 1-\e^{-\frac{\dt}{\ep^\alp}(1+\ep\ii k\cdot v)} \right)}{ (1+\ep\ii k\cdot v)^2} 
-\frac{\e^{-\frac{t_{j+1}}{\ep^\alp}(1+\ep\ii k\cdot v)}}{1+\ep\ii k\cdot v}\right)M_\beta(v)\right\rangle, \\
\label{cj_diff}
c_j(k)&=\left\langle \left( 
\frac{\e^{-\frac{t_{j}}{\ep^\alp}(1+\ep\ii k\cdot v)}}{1+\ep\ii k\cdot v}
-
\frac{\ep^\alp}{\dt} \frac{\e^{-\frac{t_j}{\ep^\alp} (1+\ep\ii k\cdot v)}\left( 1-\e^{-\frac{\dt}{\ep^\alp}(1+\ep\ii k\cdot v)} \right)}{ (1+\ep\ii k\cdot v)^2} 
\right)M_\beta(v)\right\rangle.
\end{align}
We have the following proposition:

\begin{proposition}
\label{Propa1moinsaDiff}
In case \ref{HypDiff} (diffusion scaling), we consider the following the scheme defined for all $k$ and for all time index $0\le n\le N$ such that $N\dt= T$ by 
\[
\fr^{n+1}(k)=\frac{\displaystyle \hat{A}_0(t_{n+1},k)+\sum\limits_{j=1}^n \left( c_j \fr^{n+1-j}(k)+b_j\fr^{n-j}(k) \right) +b_0\fr^n(k)}{\displaystyle1-c_0},
\]
with the initial condition $\fr^0(k)=\fr(0,k)$ and $\hat{A}_0, b_j$ and $c_j$ defined in (\ref{A0Four})-(\ref{bj_diff})-(\ref{cj_diff}). 
This scheme has the following properties:
\begin{romannum}
\item The scheme is of order $1$ 
and the total mass is conserved 
\[
\forall n\in[\![1,N]\!], \fr^n(0)=\fr^0(0).
\]
\item The scheme is AP: 
for a fixed $\dt$, the scheme solves the diffusion equation when $\ep$ goes to zero 
	\[
\frac{\fr^{n+1}(k)-\fr^n(k)}{\Delta t}=-\left\langle (k\cdot v)^2 M_\beta(v)\right\rangle\fr^{n+1}(k).
\]
\end{romannum}
\end{proposition}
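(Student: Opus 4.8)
The plan is to exploit the fact that the recursion is an exact consequence of the Duhamel identity \eqref{Duhamel} truncated only by the order-two quadrature \eqref{quadrature2}: part (i) is then mostly a bookkeeping of that quadrature error together with a telescoping identity at $k=0$, while the asymptotic-preserving statement (ii) reduces to a careful $\ep\to 0$ expansion of the two ``boundary'' coefficients $b_0(k)$ and $c_0(k)$, the ones whose lower integration limit stays at $0$.

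For part (i), the truncation error is read off from \eqref{quadrature2}: substituting the numerical iterates for $\fr$ in \eqref{Duhamel} introduces on each of the $n+1\le T/\dt+1$ subintervals an error $O(\dt^2)$ that is \emph{uniform in} $\ep$ (the time integrals $b_j,c_j$ in \eqref{bj_diff}--\eqref{cj_diff} being evaluated exactly), and summation yields a global consistency error $O(\dt)$, i.e.\ order one uniformly in $\ep$. The scheme is well defined since $\Re\bigl(1-c_0(k)\bigr)\ge 1-\int_0^{\dt/\ep^2}\bigl(1-\tfrac{\ep^2 s}{\dt}\bigr)\e^{-s}\dd s>0$. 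For mass conservation I set $k=0$: then $\langle \e^{-\ii\ep sk\cdot v}M_\beta\rangle=1$, so $b_j(0)+c_j(0)=\int_{t_j/\ep^2}^{t_{j+1}/\ep^2}\e^{-s}\dd s$, $\hat{A}_0(t_{n+1},0)=\e^{-t_{n+1}/\ep^2}\fr^0(0)$, and the telescoping identity $\sum_{j=0}^n\bigl(b_j(0)+c_j(0)\bigr)=1-\e^{-t_{n+1}/\ep^2}$ gives, by an immediate induction on $n$, $\fr^{n+1}(0)=\fr^0(0)$.

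For part (ii) I first discard the terms that are exponentially small in $\ep$: since $t_{n+1}\ge\dt>0$ one has $\hat{A}_0(t_{n+1},k)=o(\ep^\infty)$, and for each $j\ge 1$ the lower limit $t_j/\ep^2\to+\infty$, so the pointwise bounds $|b_j(k)|,|c_j(k)|\le \e^{-t_j/\ep^2}$ give $b_j(k),c_j(k)=o(\ep^\infty)$ too. A forward induction on $n$ keeps all iterates bounded as $\ep\to0$, so the recursion collapses to $\bigl(1-c_0(k)\bigr)\fr^{n+1}(k)=b_0(k)\,\fr^n(k)+o(\ep^\infty)$. The only genuine computation is the expansion of $b_0$ and $1-c_0$ from \eqref{bj_diff}--\eqref{cj_diff} with $j=0$: dropping the $\e^{-\dt/\ep^2}$-remainders and using $\frac{1}{1+\ii\ep k\cdot v}=1-\ii\ep k\cdot v-\ep^2(k\cdot v)^2+O(\ep^3)$, the evenness of $M_\beta$ (which kills the odd moments) and the finiteness of $\langle(k\cdot v)^2M_\beta\rangle$ guaranteed by Case~\ref{HypDiff}, one obtains
\[
b_0(k)=\frac{\ep^2}{\dt}+O(\ep^4),\qquad 1-c_0(k)=\ep^2\Bigl(\frac{1}{\dt}+\langle(k\cdot v)^2M_\beta(v)\rangle\Bigr)+O(\ep^4).
\]
Dividing the collapsed recursion by $\ep^2$ and letting $\ep\to0$ gives $\bigl(\tfrac{1}{\dt}+\langle(k\cdot v)^2M_\beta\rangle\bigr)\fr^{n+1}(k)=\tfrac{1}{\dt}\fr^n(k)$, which is exactly the implicit Euler discretization \eqref{EulerDiffFour} of the diffusion equation.

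The main obstacle is precisely this degeneracy: both $1-c_0(k)\to0$ and $b_0(k)\to0$ as $\ep\to0$, so at leading order the limiting relation is the empty identity $0=0$; the diffusion operator only emerges from the $\ep^2$-coefficient. This is why the time integrals in \eqref{bj_diff}--\eqref{cj_diff} must be kept exact rather than further approximated, and why the finite-second-moment hypothesis of Case~\ref{HypDiff} is used here — the very same expansion breaks down under Case~\ref{HypDiffAN}, which is what forces the modified coefficients in the anomalous-diffusion version of the scheme.
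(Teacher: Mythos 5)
Your proposal is correct and follows essentially the same route as the paper's proof: order one follows from the $O(\dt^2)$ quadrature error per subinterval summed over $O(1/\dt)$ steps, mass conservation from the telescoping identity at $k=0$ by induction, and the AP property from the exponential smallness of $\hat{A}_0$ and of $b_j,c_j$ for $j\ge1$ together with the expansions $b_0=\frac{\ep^2}{\dt}+o(\ep^2)$ and $1-c_0=\ep^2\bigl(\frac{1}{\dt}+\langle(k\cdot v)^2M_\beta\rangle\bigr)+o(\ep^2)$, yielding the implicit Euler scheme \eqref{EulerDiffFour}; your added remarks on $\Re(1-c_0)>0$ and the boundedness of the iterates are harmless refinements. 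The only nitpick is that your $O(\ep^4)$ remainders implicitly require a finite fourth moment of $M_\beta$, whereas Case \ref{HypDiff} only guarantees finite second moments, so these remainders should be stated as $o(\ep^2)$ (as in the paper), which is all the limit argument needs.
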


\begin{remark}
From the numerical test, the scheme appears to be of order
one uniformly in epsilon, as highlighted in Fig. \ref{a1moinsaDiff_Unif}.
This uniform property can be proved; however, since the proof is rather technical,
this is postponed to a work in preparation (see \cite{CrousHivertLemou2}). 
\end{remark}

\begin{proof}
From the construction of the scheme developed above, this scheme is clearly consistent and of order $1$ in time, 
uniformly in $\ep$. By induction we also remark that it conserves the numerical mass $\fr^n(0)$. 
Let us prove that, for a fixed $\dt$ the scheme solves the diffusion equation  when $\ep$ goes to zero. Indeed, for small $\ep$ the terms $b_j(k), c_j(k)$ for $j\ge 1$ and $\hat{A}_0$ are exponentially small, hence the scheme becomes 
\[
(1-c_0(k))\fr^{n+1}(k)= b_0(k)\fr^n(k)+o(\ep^\infty), \;\; \mbox{ when } \varepsilon \rightarrow 0. 
\]
 From the explicit expressions of $b_0(k)$ and $c_0(k)$ given in (\ref{bj_diff})-(\ref{cj_diff}) we get 
\begin{align*}
b_0(k)&=\frac{\ep^2}{\dt}+o(\ep^2), \\
c_0(k)&=1-\frac{\ep^2}{\dt}-\ep^2\left\langle (k\cdot v)^2 M_\beta(v)\right\rangle+o(\ep^2).
\end{align*}
Hence when $\ep$ goes to zero with a fixed $\dt$, the scheme of Prop. \ref{Propa1moinsaDiff} degenerates into
\[
\frac{\fr^{n+1}(k)-\fr^n(k)}{\dt}=-\left\langle (k\cdot v)^2 M_\beta(v)\right\rangle \fr^{n+1}(k),
\]
that is an implicit scheme solving the diffusion equation (\ref{Diff_eq}) in Fourier variable.

\qquad \end{proof}

\subsubsection{AP scheme for anomalous diffusion}

Now, we want to set an AP scheme for the kinetic equation (\ref{KinDiff}) in the anomalous diffusion scaling. 
Still using the notations defined in (\ref{A0Four})-(\ref{bj})-(\ref{cj}), the scheme in Prop. \ref{Propa1moinsaDiff} writes as 
\begin{align*}
(1-c_0) \fr^{n+1}(k)= \hat{A}_0(t_{n+1},k)+\sum\limits_{j=1}^n \left( c_j\fr^{n+1-j}(k)+b_j\fr^{n-j}(k) \right) +b_0\fr^n(k).
\end{align*}
 We remind that in the case of classical diffusion, the term $c_0 \fr^{n+1}$ produces 
 the right limit for small $\ep$. If we consider the same algorithm, as we are forced to consider a 
 bounded velocity space, the numerical second order moment of $M_\beta$ will be large but finite. 
 Therefore, we have to suitably transform the integrals $c_j$ and $b_j$ before discretizing in velocity. 
We have   
\begin{align*}
b_j &=\int_{\frac{t_j}{\ep^\alp}}^{\frac{t_{j+1}}{\ep^\alp}}\frac{\ep^\alp s-t_j}{\dt} \e^{-s}\left\langle \left(\e^{-\ii \ep s k\cdot v}-1\right) M_\beta\right\rangle\dd s
+\int_{\frac{t_j}{\ep^\alp}}^{\frac{t_{j+1}}{\ep^\alp}}\frac{\ep^\alp s-t_j}{\dt} \e^{-s}\dd s, \\
c_j &=\int_{\frac{t_j}{\ep^\alp}}^{\frac{t_{j+1}}{\ep^\alp}}\left( 1-\frac{\ep^\alp s-t_j}{\dt} \right)\e^{-s}\left\langle \left(\e^{-\ii \ep s k\cdot v}-1\right) M_\beta\right\rangle\dd s \\
&+\int_{\frac{t_j}{\ep^\alp}}^{\frac{t_{j+1}}{\ep^\alp}}\left( 1-\frac{\ep^\alp s-t_j}{\dt} \right)\e^{-s}\dd s, 
\end{align*} 
and in the velocity integrations we set $w=\ep|k|v$ as in (\ref{CalculDiffAN_1}), so when 
$M_\beta(v)=m/\left(1+|v|^\beta\right)$ 
we have 
\begin{eqnarray}
\lefteqn{b_j }~~& =&\ep^\alp |k|^\alp \left\langle \frac{m}{(\ep|k|)^\beta+|w|^\beta}\int_{\frac{t_j}{\ep^\alp}}^\frac{t_{j+1}}{\ep^\alp}   \frac{\ep^\alp s-t_j}{\dt} \e^{-s}\left( \e^{-\ii sw\cdot e}-1  \right)\dd s\right\rangle  \nonumber \\
& &+\int_{\frac{t_j}{\ep^\alp}}^{\frac{t_{j+1}}{\ep^\alp}}\frac{\ep^\alp s-t_j}{\dt} \e^{-s}\dd s , \nonumber  \\
\lefteqn{c_j }~~& =&\ep^\alp |k|^\alp \left\langle \frac{m}{(\ep|k|)^\beta+|w|^\beta}\int_{\frac{t_j}{\ep^\alp}}^\frac{t_{j+1}}{\ep^\alp}   \left( 1-\frac{\ep^\alp s-t_j}{\dt} \right)\e^{-s}\left( \e^{-\ii sw\cdot e}-1  \right)\dd s\right\rangle  \nonumber \\
& &+\int_{\frac{t_j}{\ep^\alp}}^{\frac{t_{j+1}}{\ep^\alp}}\left( 1-\frac{\ep^\alp s-t_j}{\dt} \right)\e^{-s}\dd s, \nonumber
\end{eqnarray}
with $e$ any unitary vector. To ensure the AP property, the time integrations are computed exactly to get
\begin{eqnarray}
\lefteqn{b_j }~~& =& 
 \e^{-\frac{t_j}{\ep^\alp}}\left(\frac{1-\e^{-\frac{\dt}{\ep^\alp}}}{\frac{\dt}{\ep^\alp}}-\e^{-\frac{\dt}{\ep^\alp}}\right)\left(1-\ep^\alp|k|^\alp \left\langle \frac{m}{(\ep|k|)^\beta+|v|^\beta} \right\rangle\right)
 \label{a1moinsa_b}  \\
 & &+ \ep^\alp |k|^\alp \left\langle \left( -\frac{\e^{-\frac{t_{j+1}}{\ep^\alp}(1+\ii v\cdot e)}}{1+\ii v\cdot e}
+\frac{\ep^\alp}{\dt} \e^{-\frac{t_j}{\ep^\alp}(1+\ii v\cdot e)}\frac{1-\e^{-\frac{\dt}{\ep^\alp}(1+\ii v\cdot e)}}{(1+\ii v\cdot e)^2}\right)  \frac{m}{(\ep|k|)^\beta+|v|^\beta}\right\rangle,  
 \nonumber \\
\lefteqn{c_j }~~& =& \e^{-\frac{t_j}{\ep^\alp}}\left(1-\frac{1-\e^{-\frac{\dt}{\ep^\alp}}}{\frac{\dt}{\ep^\alp}}\right)\left(1-\ep^\alp|k|^\alp \left\langle \frac{m}{(\ep|k|)^\beta+|v|^\beta} \right\rangle\right)
 \label{a1moinsa_c}  \\
& &+ \ep^\alp |k|^\alp \left\langle \left( \frac{\e^{-\frac{t_j}{\ep^\alp}(1+\ii v\cdot e)}}{1+\ii v\cdot e}
-\frac{\ep^\alp}{\dt} \e^{-\frac{t_j}{\ep^\alp}(1+\ii v\cdot e)}\frac{1-\e^{-\frac{\dt}{\ep^\alp}(1+\ii v\cdot e)}}{(1+\ii v\cdot e)^2}\right)  \frac{m}{(\ep|k|)^\beta+|v|^\beta}\right\rangle.   
 \nonumber
\end{eqnarray}

We have the following proposition

\begin{proposition}
\label{Propa1moinsaDiffAN}
In case \ref{HypDiffAN} (anomalous diffusion scaling), with the notations $\hat{A}_0$, $b_j$ and $c_j$ defined in (\ref{A0Four})-(\ref{a1moinsa_b})-(\ref{a1moinsa_c}), we consider  the following scheme defined for all $k$ and for all time index $0\le n\le N$ such that $N\dt = T$ by
\[
\fr^{n+1}(k)=\frac{ \displaystyle\hat{A}_0(t_{n+1},k)+\sum\limits_{j=1}^n \left( c_j\fr^{n+1-j}(k)+b_j\fr^{n-j}(k) \right) +b_0\fr^n(k) }{\displaystyle1-c_0},
\]
with the initial condition $ \fr^0(k)=\fr(0,k)$. This scheme has the following properties:
\begin{romannum}
\item The scheme is of order $1$ 
and preserves the total mass 
\[
\forall n\in[\![1,N]\!], \fr^n(0)=\fr^0(0).
\]
\item The scheme is AP: 
for a fixed $\dt$, the scheme solves the anomalous diffusion equation when $\ep$ goes to zero 
	\[
\frac{\fr^{n+1}(k)-\fr^{n}(k)}{\Delta t}=-\kappa |k|^\alp \fr^{n+1}(k),
\]
where $\kappa$ has been defined by (\ref{kappa}).
\end{romannum}
\end{proposition}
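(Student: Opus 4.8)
The plan is to follow the same two-step pattern as in the proof of Proposition \ref{Propa1moinsaDiff}, namely prove the order and mass-conservation property (i) by inspection of the construction, and then establish the AP property (ii) by isolating the $j=0$ contribution and passing to the limit $\ep\to 0$. For part (i), the order-$1$ consistency is inherited from the quadrature \eqref{quadrature2}, which is $O(\dt^2)$ uniformly in $\ep$, summed over $n\le T/\dt$ steps; the exact time integrations producing \eqref{a1moinsa_b}--\eqref{a1moinsa_c} do not affect consistency since they reproduce $b_j,c_j$ exactly. Mass conservation $\fr^n(0)=\fr^0(0)$ follows by evaluating at $k=0$: there $\hat A_0(t_{n+1},0)=\e^{-t_{n+1}/\ep^\alp}$, the velocity integrals reduce to $\langle M_\beta\rangle=1$, and one checks by induction that $1-c_0(0) = \sum_{j\ge 0}(b_j(0)+c_j(0)) $ telescopes with the $\hat A_0$ term exactly as in the diffusion case — equivalently, summing the exact identity \eqref{Duhamel} at $k=0$ gives $\fr(t_{n+1},0)=\fr(0,0)$, and the quadrature is exact at $k=0$ because the integrand $\e^{-s}$ times a constant is handled by the explicitly computed $\int \e^{-s}\dd s$ pieces.

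For part (ii), I would argue exactly as in Proposition \ref{Propa1moinsaDiff}: for fixed $\dt$ and $j\ge 1$, the prefactors $\e^{-t_j/\ep^\alp}$ in \eqref{a1moinsa_b}--\eqref{a1moinsa_c} and the term $\hat A_0(t_{n+1},k)$ are $o(\ep^\infty)$ as $\ep\to 0$, so the scheme collapses to
\[
(1-c_0(k))\,\fr^{n+1}(k) = b_0(k)\,\fr^n(k) + o(\ep^\infty).
\]
It then remains to expand $b_0(k)$ and $c_0(k)$ as $\ep\to 0$. In both expressions the "elementary" term (first line of \eqref{a1moinsa_b}, resp. \eqref{a1moinsa_c}) is evaluated at $t_j=0$, giving $\bigl(\tfrac{1-\e^{-\dt/\ep^\alp}}{\dt/\ep^\alp}-\e^{-\dt/\ep^\alp}\bigr)\bigl(1-\ep^\alp|k|^\alp\langle \tfrac{m}{(\ep|k|)^\beta+|v|^\beta}\rangle\bigr)$ for $b_0$ and $\bigl(1-\tfrac{1-\e^{-\dt/\ep^\alp}}{\dt/\ep^\alp}\bigr)(\cdots)$ for $c_0$; since $\dt/\ep^\alp\to\infty$, the first factor of $b_0$ tends to $\ep^\alp/\dt + o(\ep^\alp)$ and that of $c_0$ to $1-\ep^\alp/\dt+o(\ep^\alp)$. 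The key point is that, by the computation \eqref{CalculDiffAN_1}--\eqref{Cs} already established, $\ep^\alp|k|^\alp\langle \tfrac{m}{(\ep|k|)^\beta+|v|^\beta}\rangle \to 0$ but the second line of \eqref{a1moinsa_b}--\eqref{a1moinsa_c}, after the change of variables $w=\ep|k|v$ that was used to derive these formulas, contributes at order $\ep^\alp$: taking $t_j=0$, the bracket $-\tfrac{\e^{-s(1+\ii w\cdot e)}}{\cdots}+\cdots$ integrated against $\tfrac{m}{(\ep|k|)^\beta+|w|^\beta}$ converges, after using $\int_0^\infty(\e^{-s(1+\ii e\cdot w)}\ \text{stuff})\dd s$, to $-\kappa$ by the identity $\int_0^{+\infty}\bigl(\int_{\mathbb R^d}(\e^{-isw\cdot e}-1)\tfrac{m}{|w|^\beta}\dd w\bigr)\e^{-s}\dd s = -\kappa$ recalled at the end of the proof of Proposition \ref{PropTheoriqueDiff}. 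Collecting terms yields $b_0(k)=\ep^\alp/\dt + o(\ep^\alp)$ and $c_0(k)=1-\ep^\alp/\dt-\ep^\alp\kappa|k|^\alp+o(\ep^\alp)$, so dividing by $\ep^\alp$ and rearranging gives
\[
\frac{\fr^{n+1}(k)-\fr^n(k)}{\dt} = -\kappa|k|^\alp\,\fr^{n+1}(k)
\]
in the limit $\ep\to 0$, which is the implicit discretization of \eqref{DiffAN_Eq_Four}.

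The main obstacle is the careful bookkeeping of the $\ep\to 0$ expansion of the second lines of \eqref{a1moinsa_b}--\eqref{a1moinsa_c}: one must check that the $\ep$-dependent denominator $(\ep|k|)^\beta+|v|^\beta$ can be replaced by $|v|^\beta$ at leading order (an $O(\ep^{\alp+d})$ correction as noted in the proof of Proposition \ref{PropMicroMacroDiffAN}), that the $s$-integral $\int_0^{\dt/\ep^\alp}$ may be extended to $\int_0^{+\infty}$ up to exponentially small error, and that these two approximations together with $\langle v M_\beta\rangle=0$ (killing the would-be $O(\ep)$ term from expanding $\e^{-\ii s w\cdot e}$ to first order — here the $-1$ subtraction is exactly what makes the $w$-integral convergent) indeed leave the constant $\kappa$ of \eqref{kappa}. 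The domination needed to justify the limit is the one already used around \eqref{inegalitecool} and in the proof of Proposition \ref{PropTheoriqueDiff}, so no new estimate is required; it is purely a matter of reorganizing the explicit formulas. Once this is in place, the AP conclusion is immediate.
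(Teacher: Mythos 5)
Your overall architecture is the same as the paper's: part (i) from the quadrature error and an induction at $k=0$, and part (ii) by observing that $\hat{A}_0$ and $b_j,c_j$ for $j\ge 1$ are $o(\ep^\infty)$ so that $(1-c_0)\fr^{n+1}=b_0\fr^n+o(\ep^\infty)$, then expanding $b_0$ and $c_0$; the final expansions you state, $b_0=\ep^\alp/\dt+o(\ep^\alp)$ and $c_0=1-\ep^\alp/\dt-\kappa\ep^\alp|k|^\alp+o(\ep^\alp)$, are exactly the paper's. But the justification you give for them contains a false step: you claim that $\ep^\alp|k|^\alp\left\langle \frac{m}{(\ep|k|)^\beta+|v|^\beta}\right\rangle\to 0$. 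In fact the change of variables $v=\ep|k|u$ together with the normalization $\langle M_\beta\rangle=1$ gives $\left\langle \frac{m}{(\ep|k|)^\beta+|v|^\beta}\right\rangle=(\ep|k|)^{d-\beta}=(\ep|k|)^{-\alp}$, so this quantity equals $1$ exactly, for every $\ep>0$ and $k\neq0$: the divergence of the velocity integral at small $|v|$ compensates the $\ep^\alp$ prefactor, which is precisely the large-velocity effect the scheme is designed to retain. Consequently the first lines of \eqref{a1moinsa_b}--\eqref{a1moinsa_c} vanish identically for $k\neq 0$ and do \emph{not} supply the terms $\ep^\alp/\dt$ and $1-\ep^\alp/\dt$ as you assert; everything must come from the second lines. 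If one follows your attribution literally (first line of $c_0\simeq 1-\ep^\alp/\dt$, second line $\simeq-\kappa\ep^\alp|k|^\alp$), one double counts, since the second line of $c_0$ actually tends to $1-\ep^\alp/\dt-\kappa\ep^\alp|k|^\alp+o(\ep^\alp)$.

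The repair is straightforward and is essentially the paper's computation: in the second line of \eqref{a1moinsa_c} with $t_0=0$, drop the exponentially small factors $\e^{-\frac{\dt}{\ep^\alp}(1+\ii v\cdot e)}$ and split $\frac{1}{1+\ii v\cdot e}=1-\frac{\ii v\cdot e}{1+\ii v\cdot e}$; the constant $1$ contributes $\ep^\alp|k|^\alp\left\langle \frac{m}{(\ep|k|)^\beta+|v|^\beta}\right\rangle=1$, the $\frac{\ep^\alp}{\dt}$-term contributes $-\frac{\ep^\alp}{\dt}(1+o(1))$, and the remainder contributes $\ep^\alp|k|^\alp\left\langle \left(\frac{1}{1+\ii v\cdot e}-1\right)\frac{m}{(\ep|k|)^\beta+|v|^\beta}\right\rangle\to-\kappa\ep^\alp|k|^\alp$ by parity (the imaginary part is odd, the real part is $-\frac{(v\cdot e)^2}{1+(v\cdot e)^2}$ times the kernel) and dominated convergence; similarly $b_0=\frac{\ep^\alp}{\dt}+o(\ep^\alp)$. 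Alternatively, keep the combination $\e^{-\ii s w\cdot e}-1$ together, i.e.\ work with $b_j,c_j$ in the form they have \emph{before} the exact time integration: there the pure time integrals do give $\ep^\alp/\dt$ and $1-\ep^\alp/\dt$, and the bracketed parts give $o(\ep^\alp)$ and $-\kappa\ep^\alp|k|^\alp$ via the identity you quote; this is the decomposition your argument implicitly has in mind, but it is not the one displayed in \eqref{a1moinsa_b}--\eqref{a1moinsa_c}. Part (i) of your argument is fine and matches the paper.
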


\begin{remark}
As in the case of diffusion, the numerical tests (see Fig. \ref{a1moinsaAN_Unif}) suggests that this scheme is order
one uniformly in epsilon. This uniform property can also be proved and this will be done in a future work 
 (see \cite{CrousHivertLemou2}). 
\end{remark}

\begin{proof}
As for the case of classical diffusion, the conservation of the mass is obtained 
by induction and the fact that it is of order $1$ comes  
from the Taylor expansion we performed in the integrals. Let us prove that for a fixed $\dt$ 
the scheme solves the anomalous diffusion equation when $\ep$ goes to zero. From the exponential decay 
of $A_0$ and $b_j,c_j, j\ge 1$ we have when $\ep$ goes to zero
\[
\left(1-c_0\right)\fr^{n+1}(k)=b_0(k)\fr^n(k)+o(\ep^\infty).
\]
On the one side we have
\[
b_0(k)=\frac{\ep^\alp}{\dt}+o(\ep^\alp),
\]
and on the other side we have
\begin{eqnarray*}
\lefteqn{c_0}~~&=& \ep^\alp |k|^\alp \left\langle  
\left( \frac{1}{1+\ii v\cdot e}-1\right)\frac{m}{(\ep|k|)^\beta+|v|^\beta} \right\rangle
+1-\frac{\ep^\alp}{\dt}+o(\ep^\alp)\\
&=&-\ep^\alp |k|^\alp \left\langle \frac{m}{|v|^\beta}\frac{(v\cdot e)^2}{1+(v\cdot e)^2} \right\rangle +1-\frac{\ep^\alp}{\dt}+o(\ep^\alp)\\
&=&-\ep^\alp |k|^\alp \kappa +1-\frac{\ep^\alp}{\dt}+o(\ep^\alp).
\end{eqnarray*}
We deduce that when $\ep$ goes to zero, the scheme degenerates into
\[
\frac{\fr^{n+1}(k)-\fr^n(k)}{\dt}=-\kappa |k|^\alp \fr^{n+1}(k),
\]
where $\kappa$ is defined by (\ref{kappa}). This is an implicit scheme for the anomalous diffusion equation (\ref{DiffAN_Eq_Four}).
\qquad \end{proof}

\begin{remark}
The previous schemes of  Prop. \ref{Propa1moinsaDiff}-\ref{Propa1moinsaDiffAN} can be modified 
into a numerical scheme which degenerates into a second order time approximation of the 
asymptotic model. Indeed, we can modify \eqref{aascheme1}
to get the following scheme  
\begin{align*}
\fr^{n+1}(k)&=A_0(t_{n+1},k)+b_0\fr^{n}(k)+\sum\limits_{j=1}^n\left( c_j \fr^{n+1-j}(k)+b_j\fr^{n-j}(k) \right)
\\&+(c_0+b_0-1)\frac{\fr^{n+1}(k)+\fr^n(k)}{2}+(1-b_0) \fr(t_{n+1},k).
\end{align*}
By construction, this scheme degenerates when $\ep$ goes to zero, into a Crank-Nicolson numerical scheme 
for the diffusion or anomalous diffusion equations. 
\end{remark}

\section{Numerical results}
\label{section4}

In this part, we present the numerical tests for the implicit scheme and the micro-macro scheme 
in the case of anomalous diffusion and for the scheme based on the Duhamel formulation of the 
kinetic equation in both cases of diffusion and of anomalous diffusion. In the sequel, 
we will denote by ISD (resp. ISA) the implicit schemes set in Prop. \ref{PropFullImplicitDiff} 
(resp. in Prop. \ref{PropFullImplicitDiffAN}) in the diffusion case 
(resp. anomalous diffusion case);  we will call MMSD (resp. MMSA) the micro-macro schemes in 
Prop. \ref{PropMicroMacroDiff} (resp. in Prop. \ref{PropMicroMacroDiffAN}) 
in the diffusion case (resp. anomalous diffusion case). Eventually, the acronyms DSD and DSA will refer to the schemes based on Duhamel Formulation in the Prop. \ref{Propa1moinsaDiff} 
and \ref{Propa1moinsaDiffAN}. 
Finally, DS and ADS denote the numerical schemes (\ref{EulerDiffFour}) and (\ref{EulerDiffANFour}) 
of the asymptotic models (diffusion and anomalous diffusion equations). 

In this part, we will consider $t>0, x\in [-1, 1], v\in \mathbb{R}$ and  the following initial data 
\[
f_0(x,v)=(1+\sin(\pi x))M_\beta(v),
\]
and periodic boundary conditions are imposed in $x$. In the diffusion case, we will consider the equilibrium 
\[
M_\beta(v)=\frac{1}{\sqrt{2\pi}}\e^{-v^2/2},
\]
whereas in the anomalous diffusion case, we will take 
\[
M_\beta(v)=\frac{m}{1+|v|^\beta}, 
\]
with $m$ chosen such that $\int_{\mathbb{R}}M_\beta(v)\dd v=1$.
In the sequel, unless other discretizations are mentioned, the following numerical parameters will hold:
\begin{romannum}
\item time: the final time will be set $T=0.1$ and $\dt=10^{-3}$. 
\item space: we will consider a uniform mesh in $x$ considering $N_x=64$ points. 
\item velocity: in both diffusion and anomalous diffusion case, a uniform velocity grid of the truncated 
domain $[-v_{\text{max}}, v_{\text{max}}]$ is considered, with $N_v$ points. 
In the diffusion case, we will take $v_{\text{max}}=10$ and $N_v=20$. 
In the anomalous diffusion case, we will consider $v_{\text{max}}=50$ and $N_v=200$.
The discretization ensures at the discrete level that $\int_{\mathbb{R}} vM_\beta(v) \dd v=0$. 
\end{romannum}
In the tests for the anomalous diffusion, we will consider $\alpha=1.5$. 

In the sequel, we will be interested in the relative error. The relative error is defined 
by the difference between a reference solution (obtained by a scheme using a refined mesh) 
and the solution obtained by the tested scheme through the following formula 
\begin{equation}
\label{error}
\text{Error}(T)=\frac{\| \rho_{\text{reference}}(T)-\rho_{\text{scheme}}(T) \|_2}{\| \rho_{\text{reference}}(T) \|_2},
\end{equation}
where $\|\cdot \|_2$ denotes the discrete $L^2$ norm and the reference scheme will be precised in each case.

\subsection{The implicit scheme in the case of the anomalous diffusion limit}

In this section, we test the  ISA scheme. 
We check that it is of order $1$ for a fixed value of $\varepsilon$ and the AP property  when $\ep$ goes to $0$ 
for a fixed value of $\Delta t$. 

Firstly, we remark that the ISD scheme used for the diffusion case does not preserve 
the asymptotic of anomalous diffusion: the left hand side of Fig. \ref{CinetiqueAN_AP} shows that 
for $\ep=10^{-6}$ the implicit scheme returns a result very different from the result given by (\ref{EulerDiffANFour}). 
Indeed, when $\varepsilon <\!\!<1$, the ISD scheme produces a stationary solution. 
So it appears that the change of variables in the velocity integrals is necessary to capture 
the anomalous diffusion limit, as shown in the right hand side of Fig. \ref{CinetiqueAN_AP}.

\begin{figure}[!htbp]
\begin{center}
\begin{tabular}{@{}c@{}c@{}}
\includegraphics[width=6.5cm]{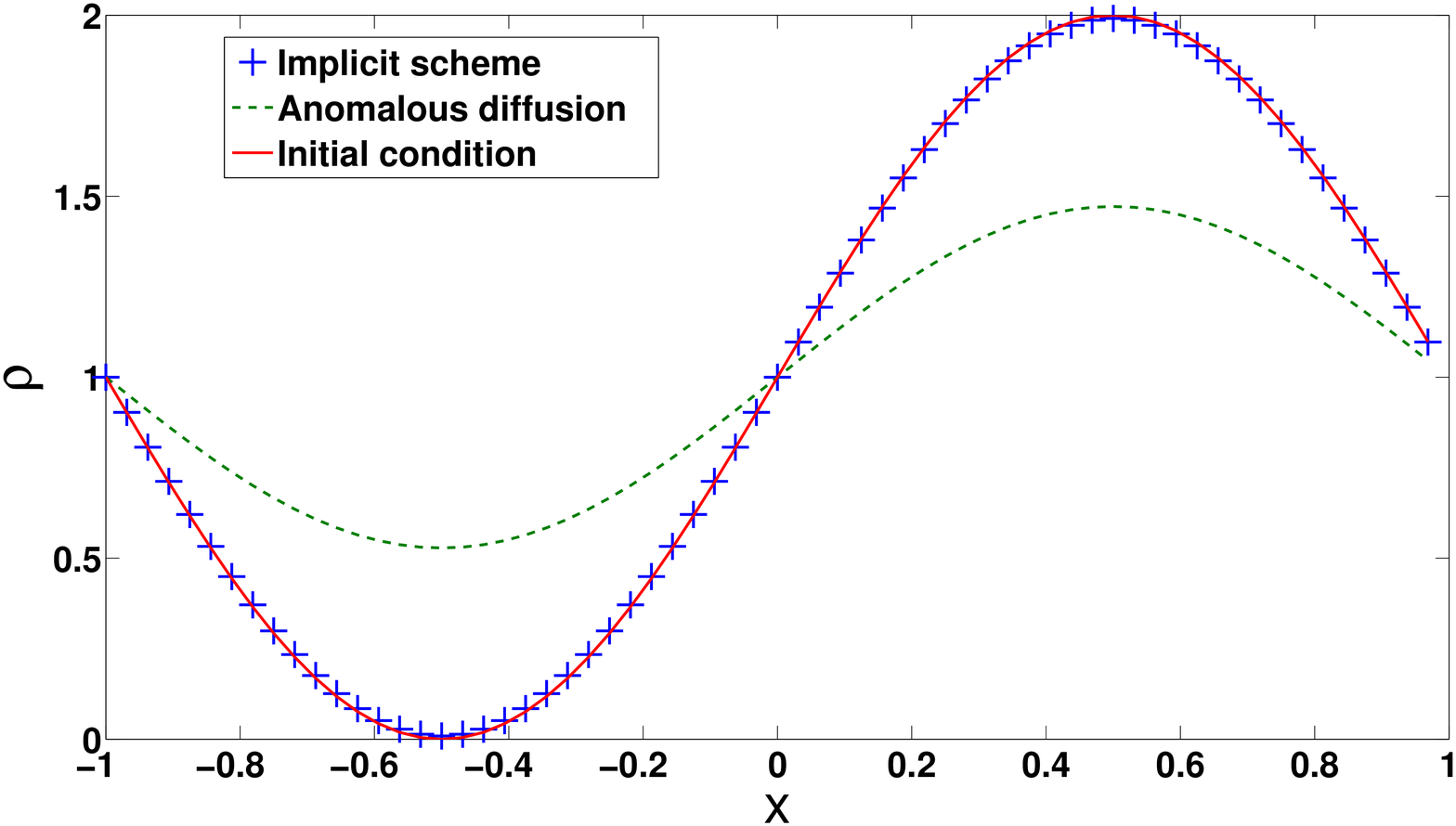} &     
\includegraphics[width=6.5cm]{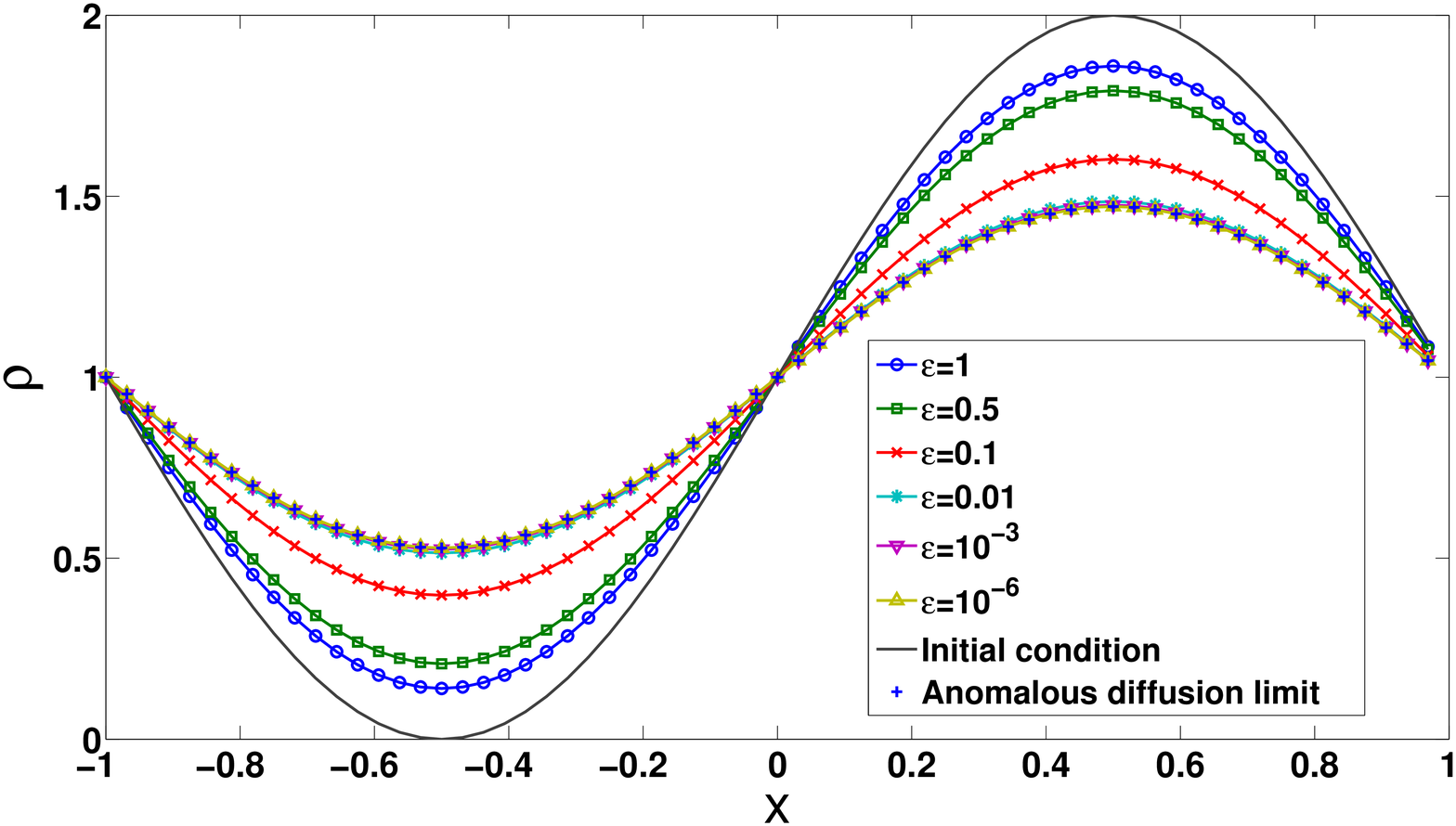}
\end{tabular}
\caption{For $\dt=10^{-3}$, the densities $\rho(t=0.1, x)$. Left: ISD scheme (with $\ep=10^{-6}$), 
by the ADS scheme and the initial data. Right: ISA scheme for different values of $\ep$ and the ADS scheme.}
\label{CinetiqueAN_AP}
\end{center}
\end{figure}



The left hand side of Fig. \ref{CinetiqueAN_46} shows the error  between the reference scheme 
(defined as the ADS scheme) and the ISA scheme as a function of $\varepsilon$, for 
$\alpha=0.8, 1, 1.5$. We observe that the convergence of the kinetic equation to the anomalous diffusion 
when $\varepsilon$ goes to zero arises with a speed $\alp$. 
Theorically, the convergence to the anomalous diffusion equation arises with a speed $\ep^{\min(\alp, 2-\alp)}$, but as we consider a finite domain for $v$ in the tests, the convergence rate always appears to be $\ep^\alp$. This result will be proved in \cite{CrousHivertLemou2}.


\begin{figure}[!htbp]
\begin{center}
\begin{tabular}{@{}c@{}c@{}}
\includegraphics[width=6.5cm]{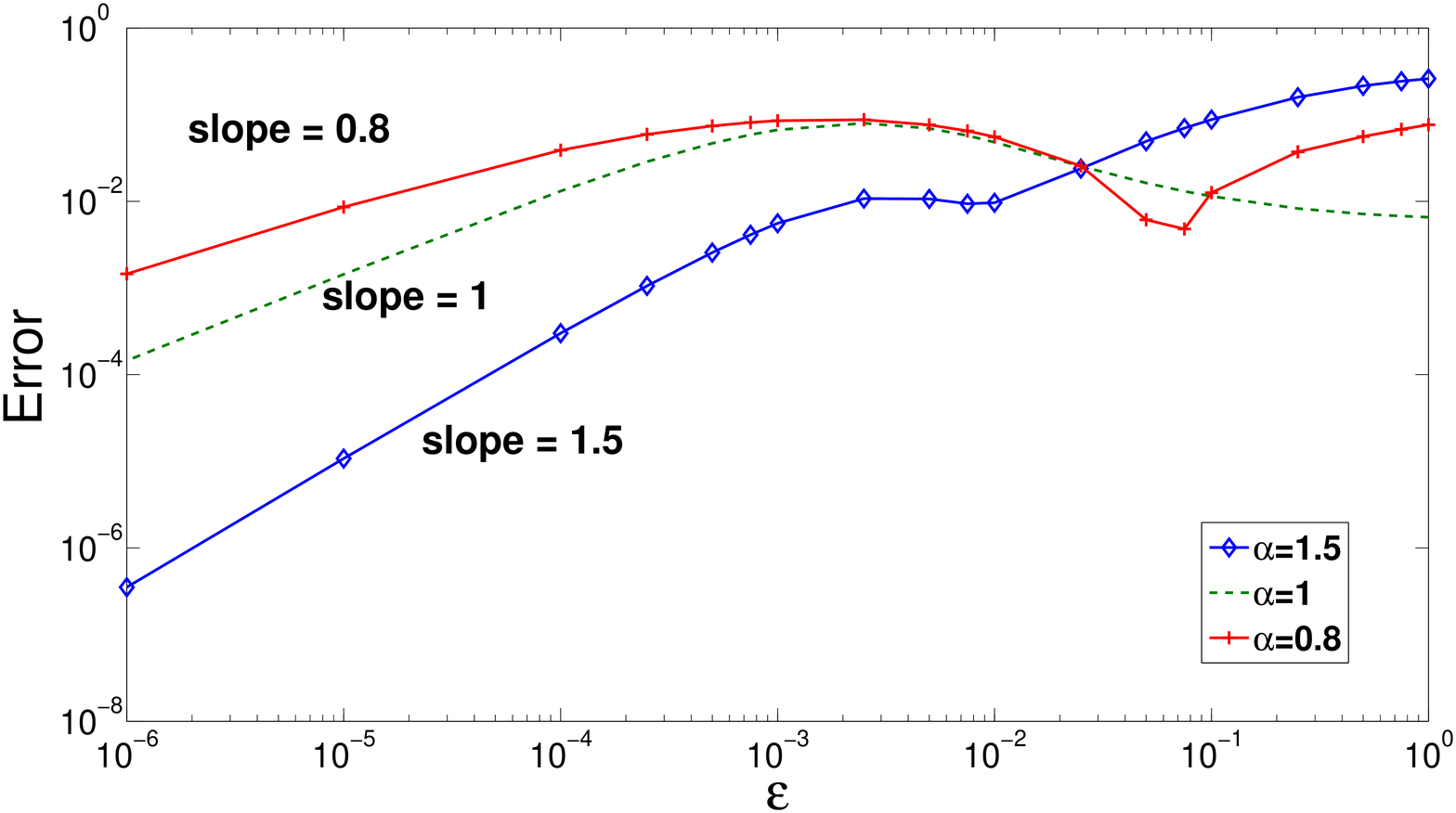} &     
\includegraphics[width=6.5cm]{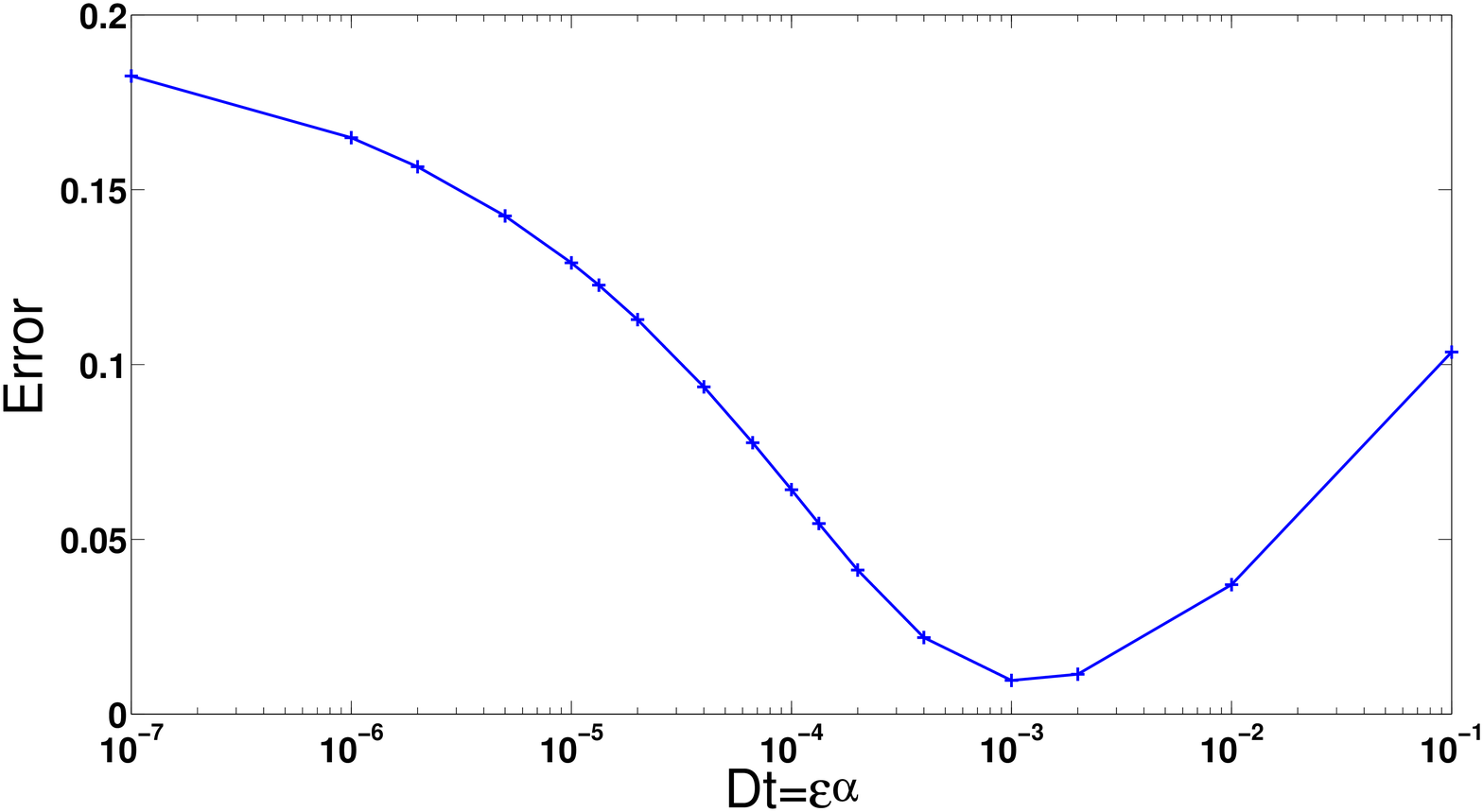}
\end{tabular}
\caption{Left: For $\dt=10^{-3}$ and $\alp=0.8, 1, 1.5$, the error as a function of $\varepsilon$ 
between the  reference ADS scheme and the ISA scheme (log scale). Right: The error between the ISA scheme for a range of $\dt=\ep^\alp$ and the ADS scheme 
as a function of $\dt$, which is fixed equal to $\varepsilon^\alpha$ (log scale). }
\label{CinetiqueAN_46}
\end{center}
\end{figure}



However the convergence in time of the  ISA scheme to the DSA scheme is not uniform with respect to $\varepsilon$.   
Setting $\dt=\ep^\alp$ in the scheme and letting $\dt$ go to $0$ shows that the densities does not converge to the density given by the anomalous diffusion equation. 
On the right hand side of Fig.  \ref{CinetiqueAN_46}, we plot the error in time between the ADS scheme as reference and the ISA scheme 
computed for a different time steps satisfying $\dt=\ep^\alp$.  
We observe that the error does not converge to $0$ when $\dt$ goes to $0$, illustrating the lack of uniformity.



\subsection{The micro-macro scheme}

As we saw in Section \ref{SectionMMscheme}, the MMSD scheme is known to be efficient in the diffusion case 
(see \cite{LemouMieussens}). In this part, we focus on the MMSA scheme in the case of the kinetic equation with anomalous diffusion limit. 
As it does not use only the Fourier variable, we must consider a grid for the space variable. Here we will take $x\in [-1,1]$ discretized with $N_x=32$ points. As we solve the transport of the micro part (equation on $g$) 
in an explicit way (using finite differences), a CFL condition has to be imposed; 
then the time step is chosen small enough. 

We start by testing the consistency of the scheme. We fix $\alp=1.5$ and for a decreasing range of time steps, 
we compute the solution given by the MMSA scheme. Then, for $\varepsilon=1, 0.5$, we compare it to the reference solution given by MMSD with $\Delta t=10^{-6}$. For $\varepsilon=10^{-7}, 10^{-8}$, we compare to the 
solution given by the ADS scheme.  
Fig. \ref{mmDiffAN_Cons2} shows the error curves obtained for different values of $\ep$. 
It appears that the MMSA scheme is of order $1$ in time.


\begin{figure}[!ht]
\centering
\includegraphics[width=13cm]{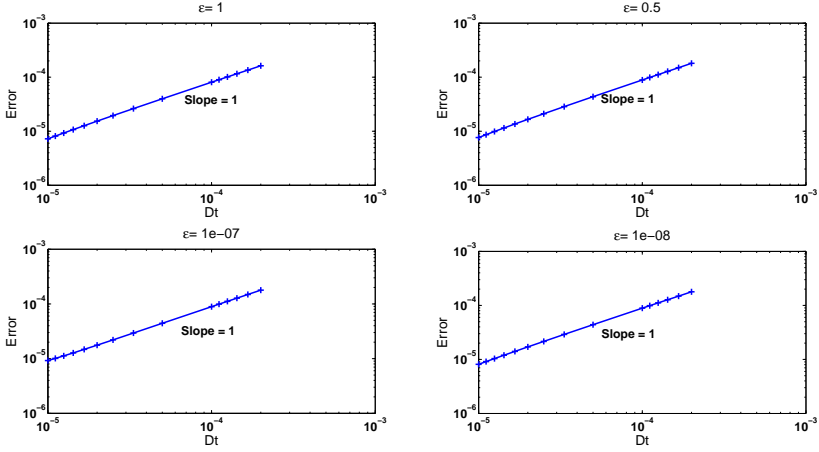}
\caption{For $\ep=1, 0.5, 10^{-7}, 10^{-8}$, the error in time between the   MMSA scheme computed for a range 
of $\dt$ and the reference scheme computed with  $\dt=10^{-6}$ (log scale). }
\label{mmDiffAN_Cons2}
\end{figure}

In order to show that the scheme preserves the asymptotic of anomalous diffusion we compute  
the densities $\rho(t=0.1, x)$ obtained by the MMSA scheme for a range of $\ep$; we compare them to the density 
obtained by the ADS scheme computed with the same $\dt$. In order to respect 
the CFL condition, we took $\dt=10^{-4}$ for $\ep=1, 0.5,0.1, 0.01$ and 
$\dt=10^{-3}$ for the smallest $\ep$.The left hand side of  Fig. \ref{mmDiffAN_AP} shows these densities in the 
case $\alp=1.5$, we observe that they are very close to the anomalous diffusion limit for small $\ep$. 
On the right hand side of Fig. \ref{mmDiffAN_AP}, the errors associated to this latter study are displayed 
in the cases $\alp=0.8, \alp=1$ and $\alp=1.5$, the ADS scheme being considered as reference. 
We observe that the convergence happens with speed $\alp$, as expected.

\begin{figure}[!htbp]
\begin{center}
\begin{tabular}{@{}c@{}c@{}}
\includegraphics[width=6.5cm]{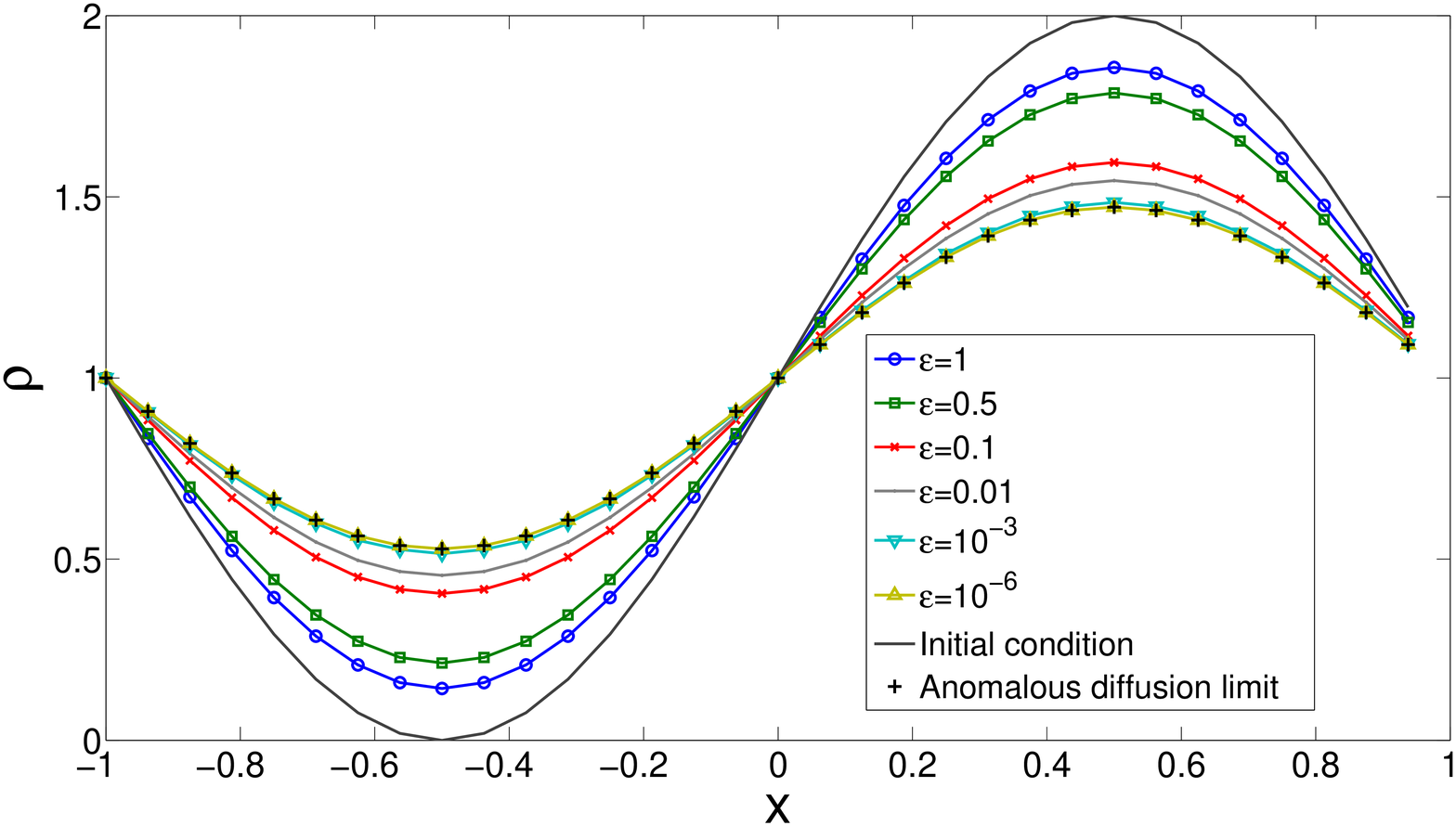} &     
\includegraphics[width=6.5cm]{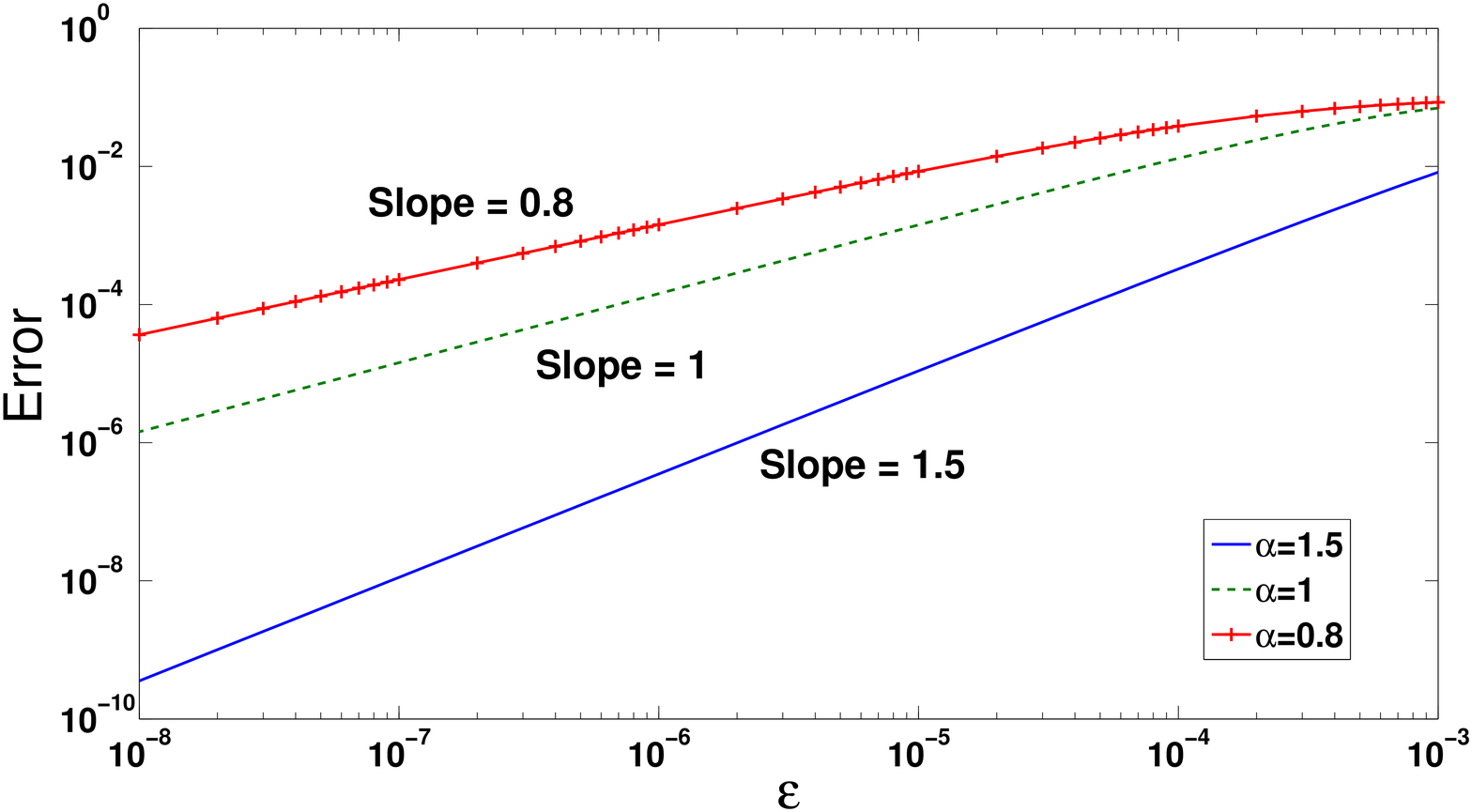}
\end{tabular}
\caption{The densities $\rho(t=0.1, x)$. Left: MMSA scheme for different values of $\ep$ 
and the DS scheme. Right: For $\dt=10^{-3}$ and $\alpha=0.8, 1, 1.5$,  the error with respect to $\varepsilon$ 
between the reference ADS scheme and the  MMSA scheme (log scale).}
\label{mmDiffAN_AP}
\end{center}
\end{figure}

%
%
%

\subsection{The integral formulation based scheme}

In this section, we focus on the  Duhamel formulation based schemes DSD and DSA in both diffusion and anomalous diffusion regimes. We put their properties of consistency, AP character and uniformity with respect to $\ep$, written in Prop.  \ref{Propa1moinsaDiff} and Prop. \ref{Propa1moinsaDiffAN}, in evidence in these two cases.
 
 
\subsubsection{Case of diffusion limit}

In the case of diffusion limit, we compare the results obtained by the  DSD scheme 
to the results obtained with the  ISD scheme. The first thing we check is 
the convergence of the algorithm when $\dt$ goes to zero. 
We choose a range of $\ep$ and we compare the densities obtained with the two algorithms as a function of $\dt$. 
Fig. \ref{a1moinsaDiff_Cons6}  displays the errors associated to these comparisons. 
It appears that for $\ep \sim 1$, the scheme is of order $2$ and of order $1$ for small $\ep$.


\begin{figure}[!ht]
\centering
\includegraphics[width=8cm]{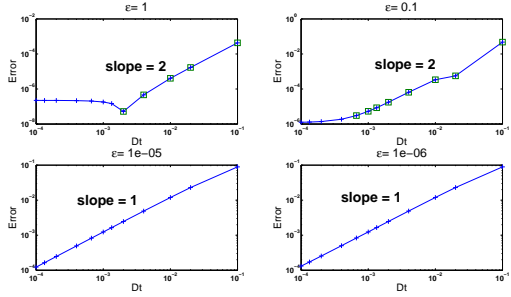}
\caption{For $\ep=1, 0.1, 10^{-5}, 10^{-6}$, the error in time between the DSD scheme 
and the reference ISD scheme computed for  $\dt=10^{-6}$ (log scale).}
\label{a1moinsaDiff_Cons6}
\end{figure}

In order to show the AP character of the scheme, we compute the densities given 
by  the scheme DSD for a decreasing range of $\ep$ ($\Delta t$ being fixed) and we check that they converge 
to the solution of the diffusion equation as $\ep$ goes to $0$. 
The densities obtained are presented in Fig. \ref{a1moinsaDiff_AP1}.  

\begin{figure}[!ht]
\centering
\includegraphics[width=8cm]{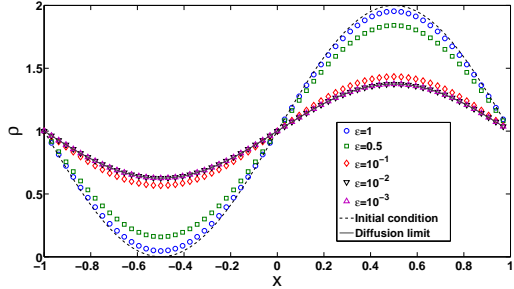}
\caption{For $\dt=10^{-3}$ the densities $\rho(t=0.1, x)$ given by the DSD scheme for different values of 
$\ep$ and the DS scheme. }
\label{a1moinsaDiff_AP1}
\end{figure}

To highlight the fact that the scheme is first order in time uniformly in $\ep$, we compare the results given by the ISD scheme with $\dt=10^{-8}$ to the results given by the DSD scheme for a range of $\dt$. These errors are displayed in the left hand side of Fig. \ref{a1moinsaDiff_Unif} as a function of $\ep$ where we observe that the error curves are stratified with respect to $\dt$, showing the first order uniform accuracy of the scheme. This property appears also in the right hand side of Fig. \ref{a1moinsaDiff_Unif} where we plotted the error between the DSD scheme computed for $\dt=\ep^2$  and the limit DS scheme computed for $\dt=10^{-7}$. This error tends to zero when $\dt$ and $\ep$ go to zero, thanks to the uniformity of the DSD scheme.

\begin{figure}[!htbp]
\begin{center}
\begin{tabular}{@{}c@{}c@{}}
\includegraphics[width=6.5cm]{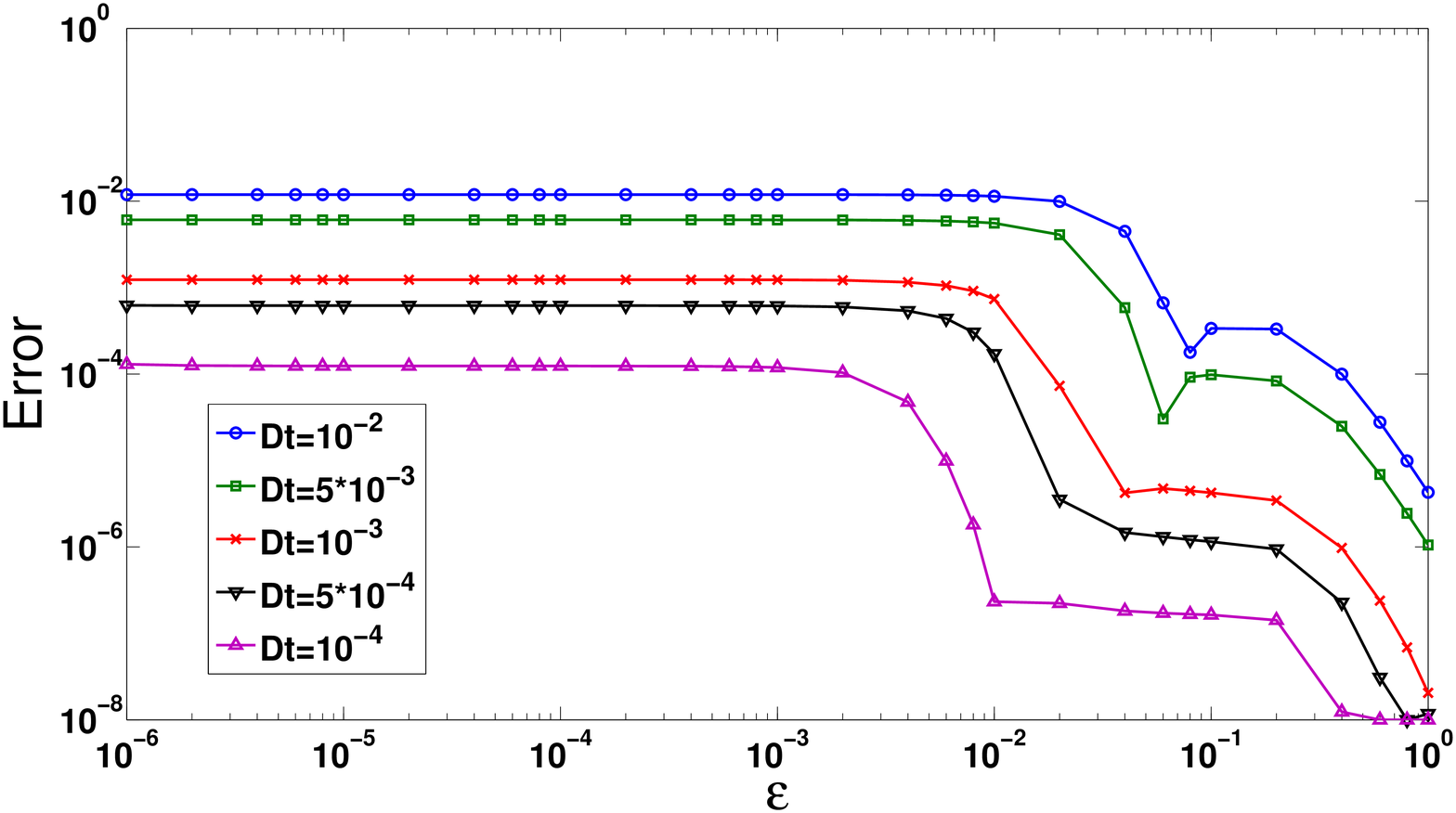} &     
\includegraphics[width=6.5cm]{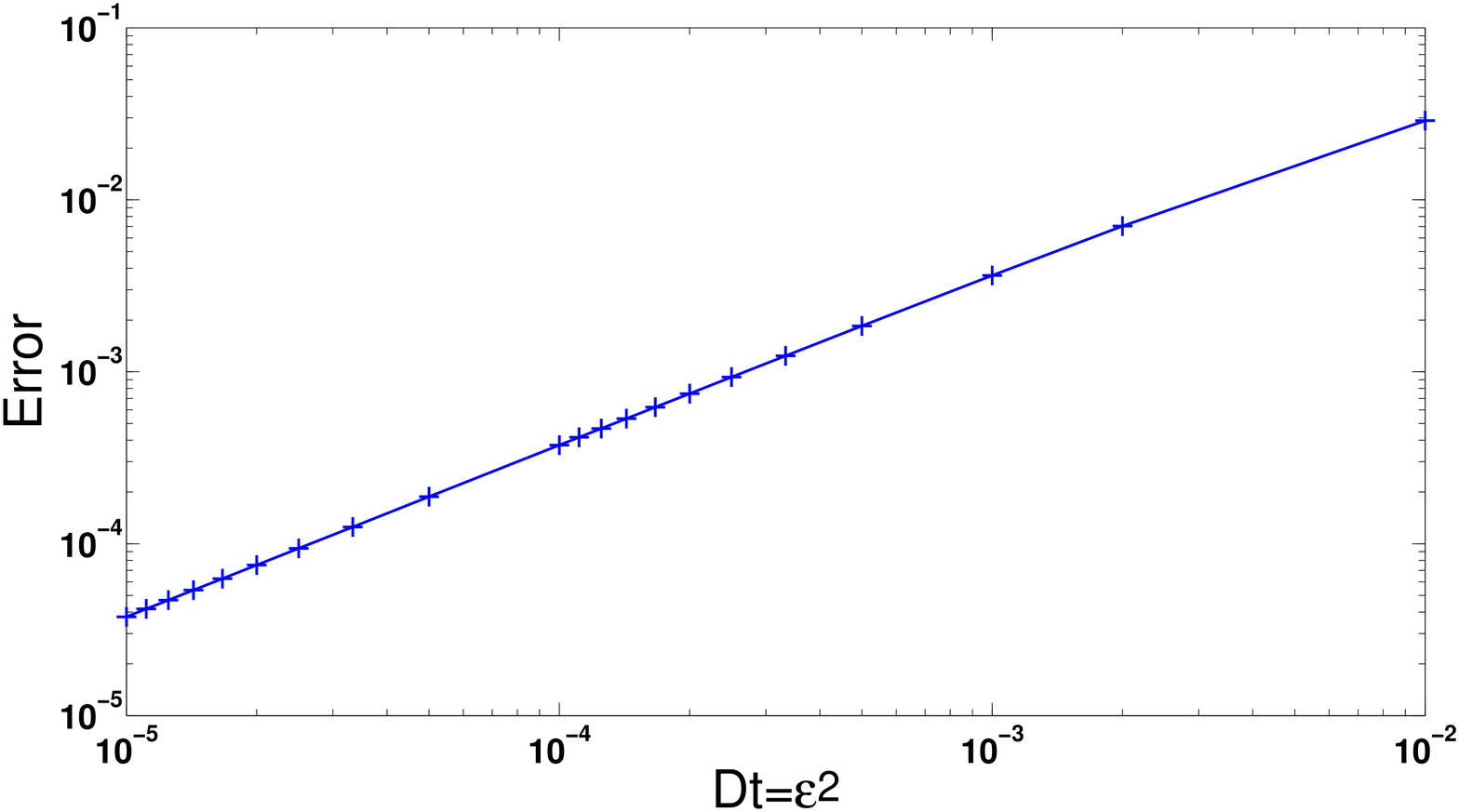}
\end{tabular}
\caption{Left: The difference between the ISD scheme computed for $\dt=10^{-8}$  and the DSD scheme computed for a range of $\dt$ as a function of $\ep$ (log scale). Right: The difference between the DSD scheme computed for $\dt=\ep^\alp$  and the DS scheme computed for $\dt=10^{-7}$ as a function of $\dt$ (log scale).}
\label{a1moinsaDiff_Unif}
\end{center}
\end{figure}

%
%
%
%

\subsubsection{Case of anomalous diffusion limit}

In this section we test the DSA scheme. 
In the case of anomalous diffusion limit, we compare the results given by the  DSA scheme to the results given by the DSA scheme computed with a smaller time step, to make the same properties appear. 
We start by highlighting the consistency of the scheme. Considering different values of $\ep$, 
we compute a reference solution with the scheme DSA with $\dt=10^{-5}$ and we compare it to the 
results given by the DSA scheme for some larger time steps. Fig. 
\ref{a1moinsaAN_Cons6} show 
error study of the convergence of the densities obtained. We observe that for $\ep=1, 0.5$ the scheme seems to be of order $2$ 
whereas it is of order $1$ for smaller values of $\ep$.

\begin{figure}[!ht]
\centering
\includegraphics[width=8cm]{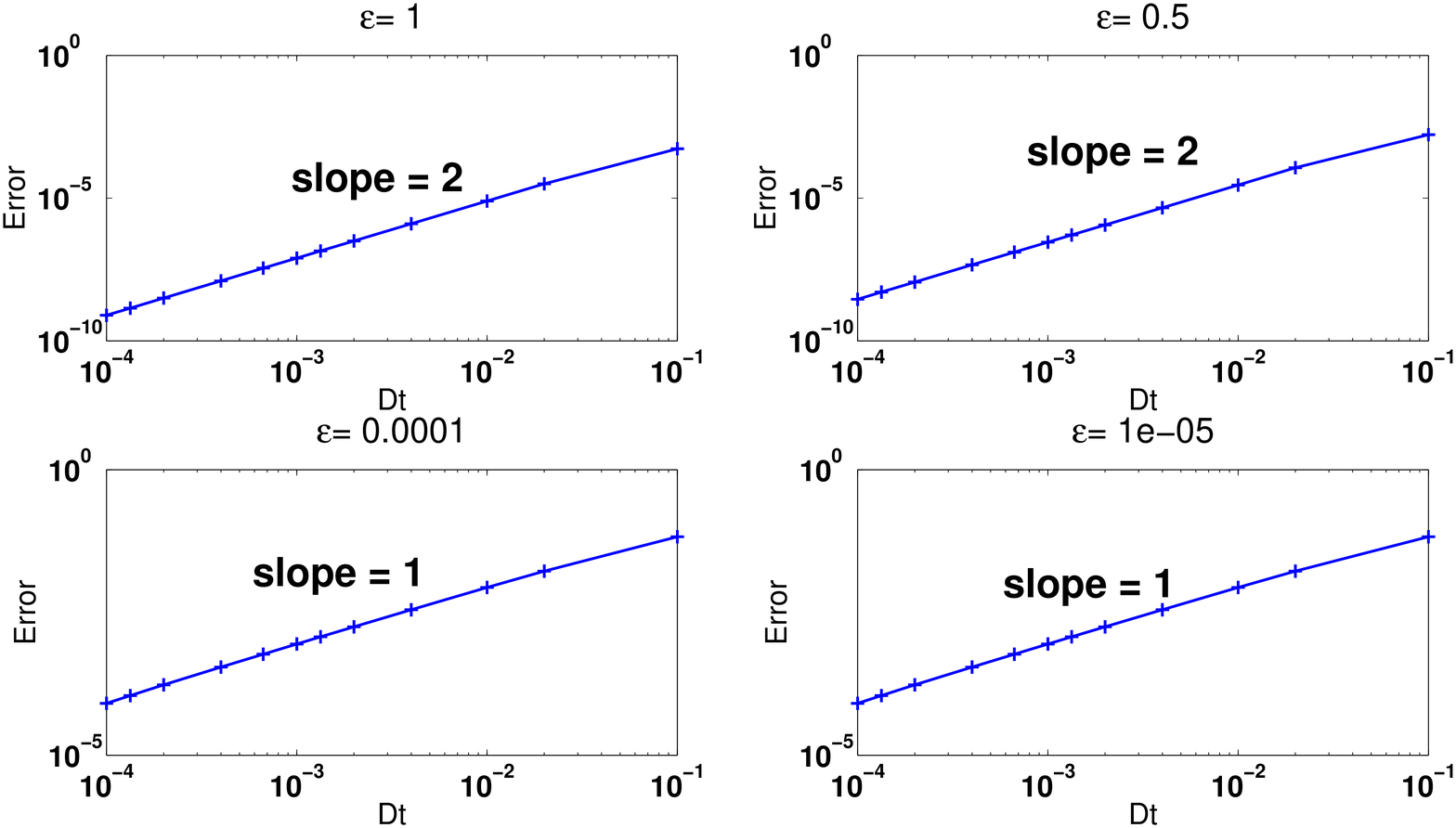}
\caption{For  $\ep=1, 0.1, 10^{-4}, 10^{-5}$, the error as a function of $\Delta t$ 
between the DSA scheme and the DSA scheme computed for  $\dt=10^{-5}$ (log scale). }
\label{a1moinsaAN_Cons6}
\end{figure}

Then, we can study the AP character of the DSA scheme. The time step being fixed to $\dt=10^{-3}$, 
we check that the results given by the DSA scheme converge to the result given by the ADS scheme 
when $\ep$ goes to $0$. Fig. \ref{a1moinsaAN_AP} presents the densities $\rho(t=0.1, x)$ 
obtained by these two schemes for different values of $\varepsilon$
and then, the error as a function of $\varepsilon$ is plotted for $\alpha=0.8, 1, 1.5$. As previously, 
the expected numerical speed of the convergence to the anomalous diffusion is recovered.


\begin{figure}[!htbp]
\begin{center}
\begin{tabular}{@{}c@{}c@{}}
\includegraphics[width=6.5cm]{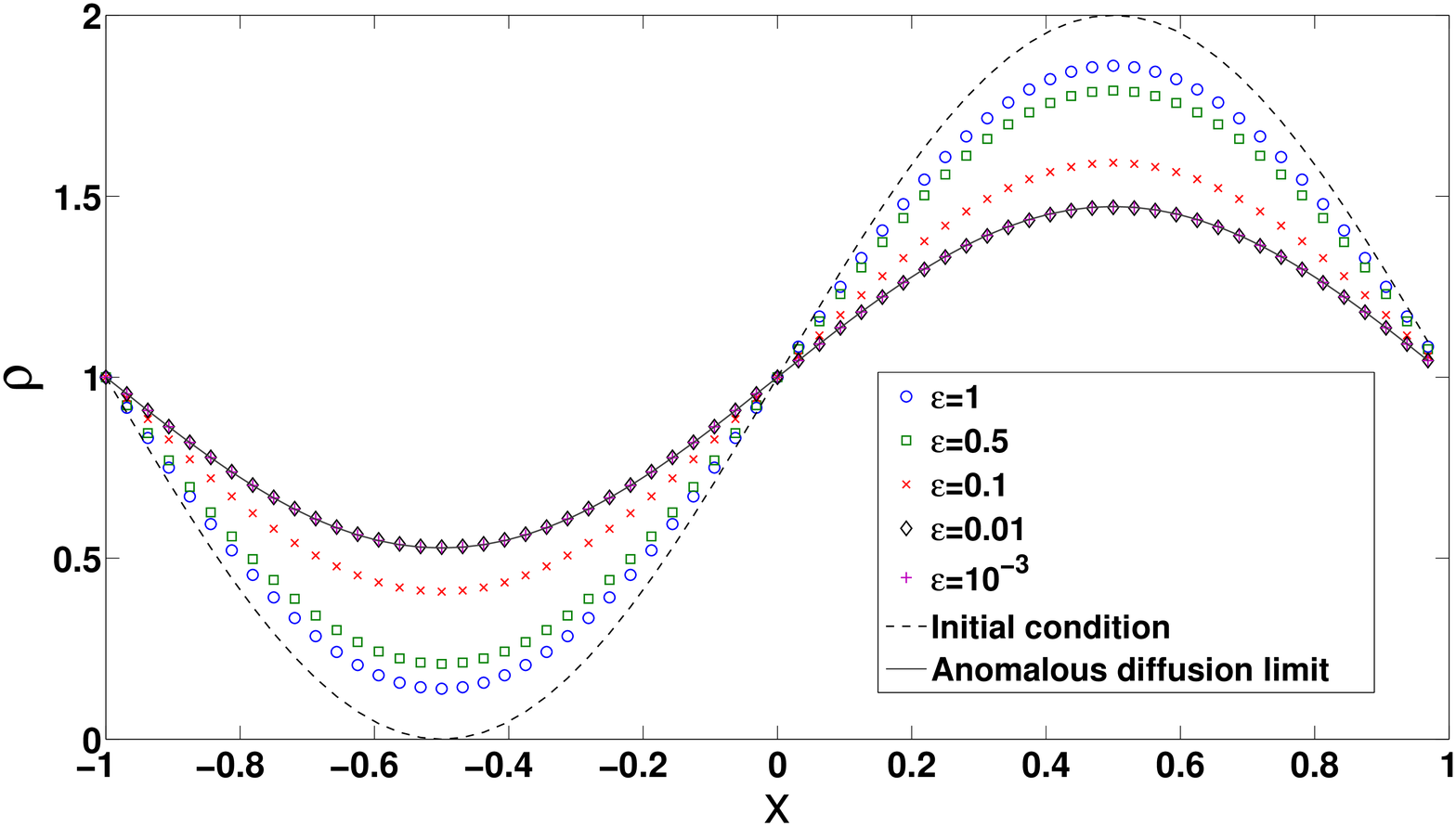} &     
\includegraphics[width=6.5cm]{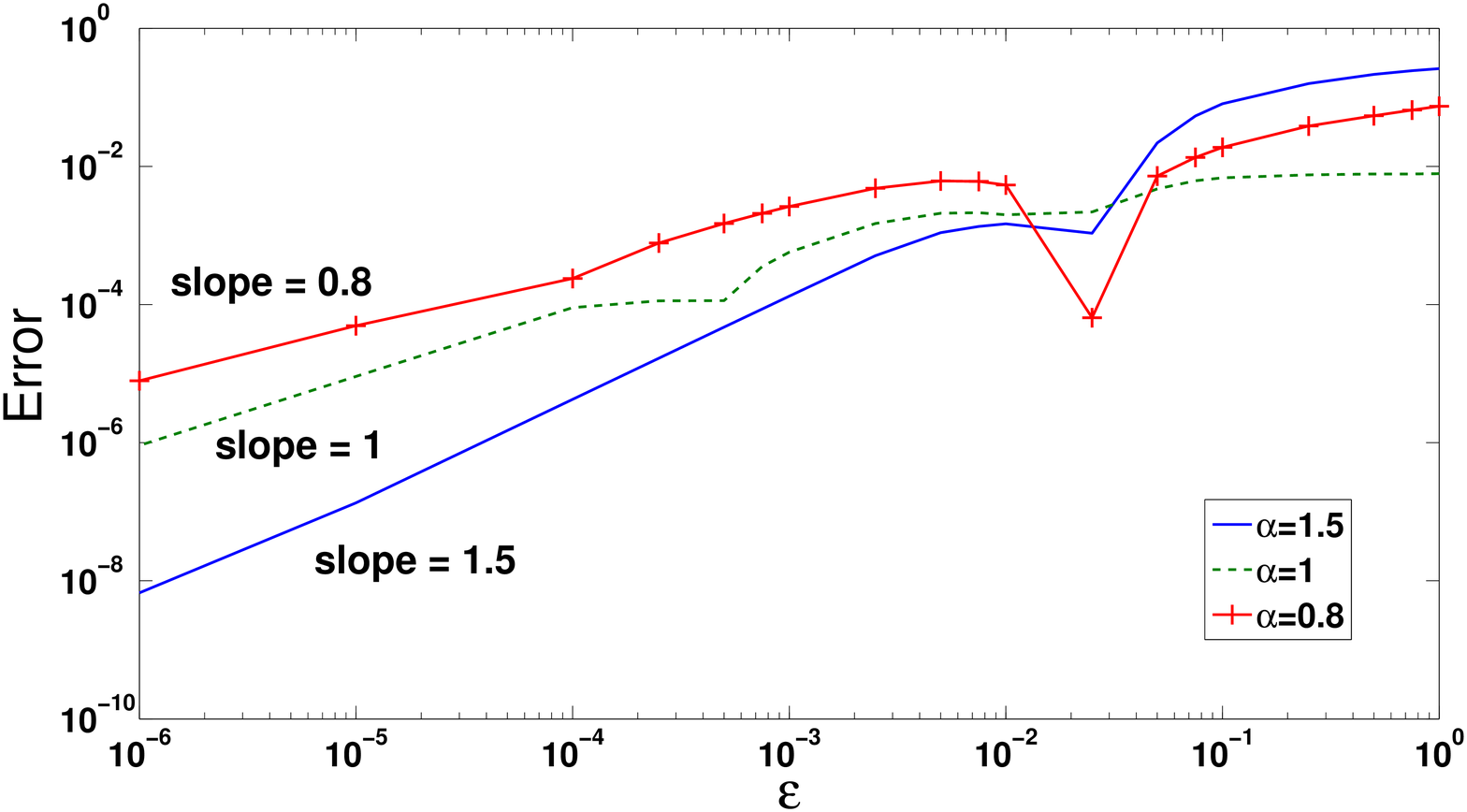}
\end{tabular}
\caption{Left: For $\dt=10^{-3}$ the densities $\rho(t=0.1, x)$ given by the DSA scheme for different values 
of $\ep$ and the ADS scheme. Right: For $\dt=10^{-3}$ and $\alpha=0.8, 1, 1.5$, the error in $\ep$ between the reference ADS scheme and the DSA scheme (log scale). }
\label{a1moinsaAN_AP}
\end{center}
\end{figure}

%

To highlight the fact that the scheme is first order in time uniformly in $\ep$, we compare the results given by the DSA scheme with $\dt=10^{-5}$ to the results given by the same DSA scheme for a range of $\dt$. These errors are displayed in the left hand side os Fig. \ref{a1moinsaAN_Unif} as a function of $\ep$ where we observe that the error curves are stratified with respect to $\dt$, showing the uniformity of the scheme with respect to $\varepsilon$. 
As for the case of diffusion limit, the right hand side of Fig. \ref{a1moinsaAN_Unif} presents the error between the DSA scheme computed for $\dt=\ep^\alp$ and the limit ADS scheme computed for  $\dt=10^{-7}$. Since the DSA scheme is of order $1$ uniformly in $\ep$, this error tends to zero when $\dt$ and $\ep$ go to zero.

\begin{figure}[!htbp]
\begin{center}
\begin{tabular}{@{}c@{}c@{}}
\includegraphics[width=6.5cm]{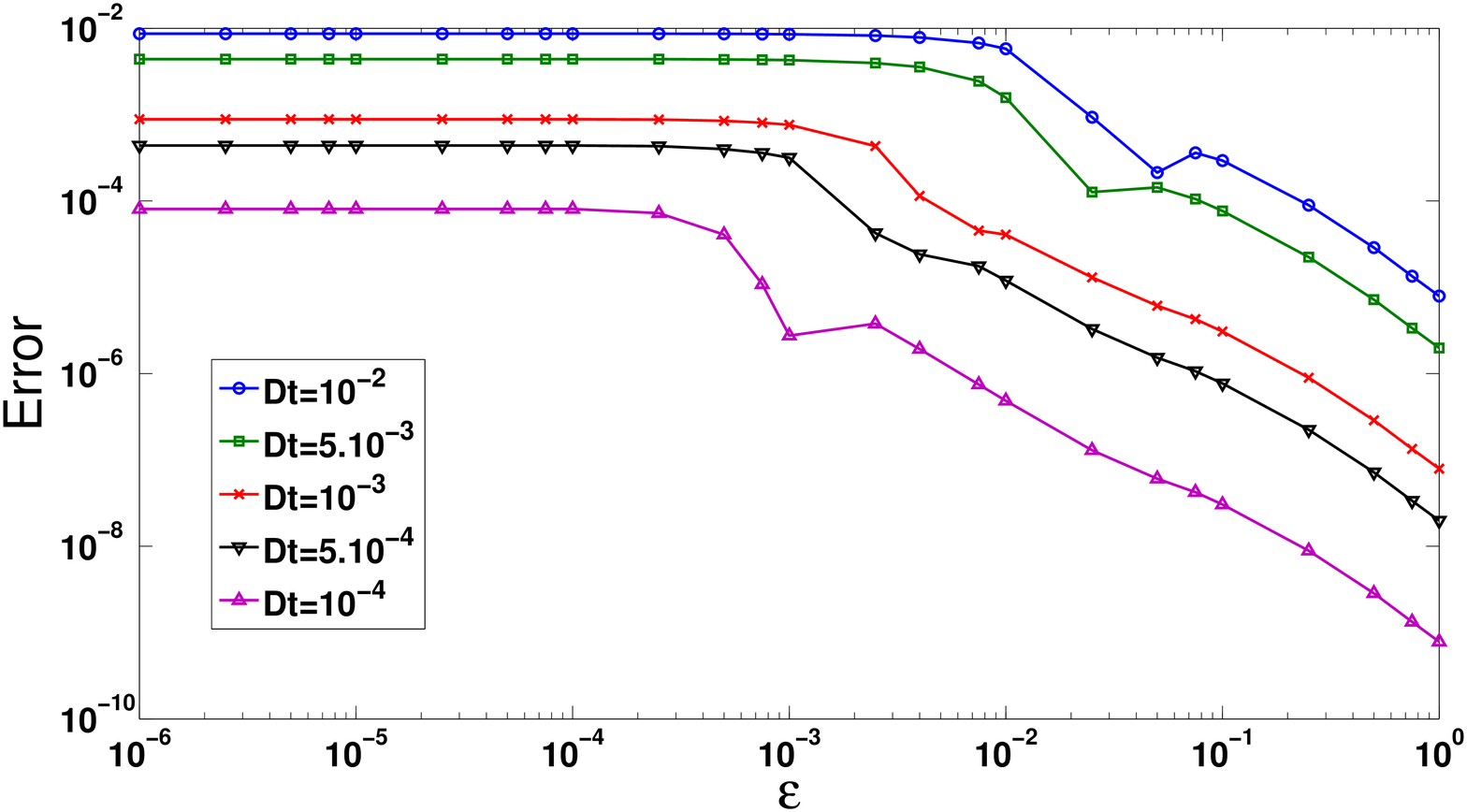} &     
\includegraphics[width=6.5cm]{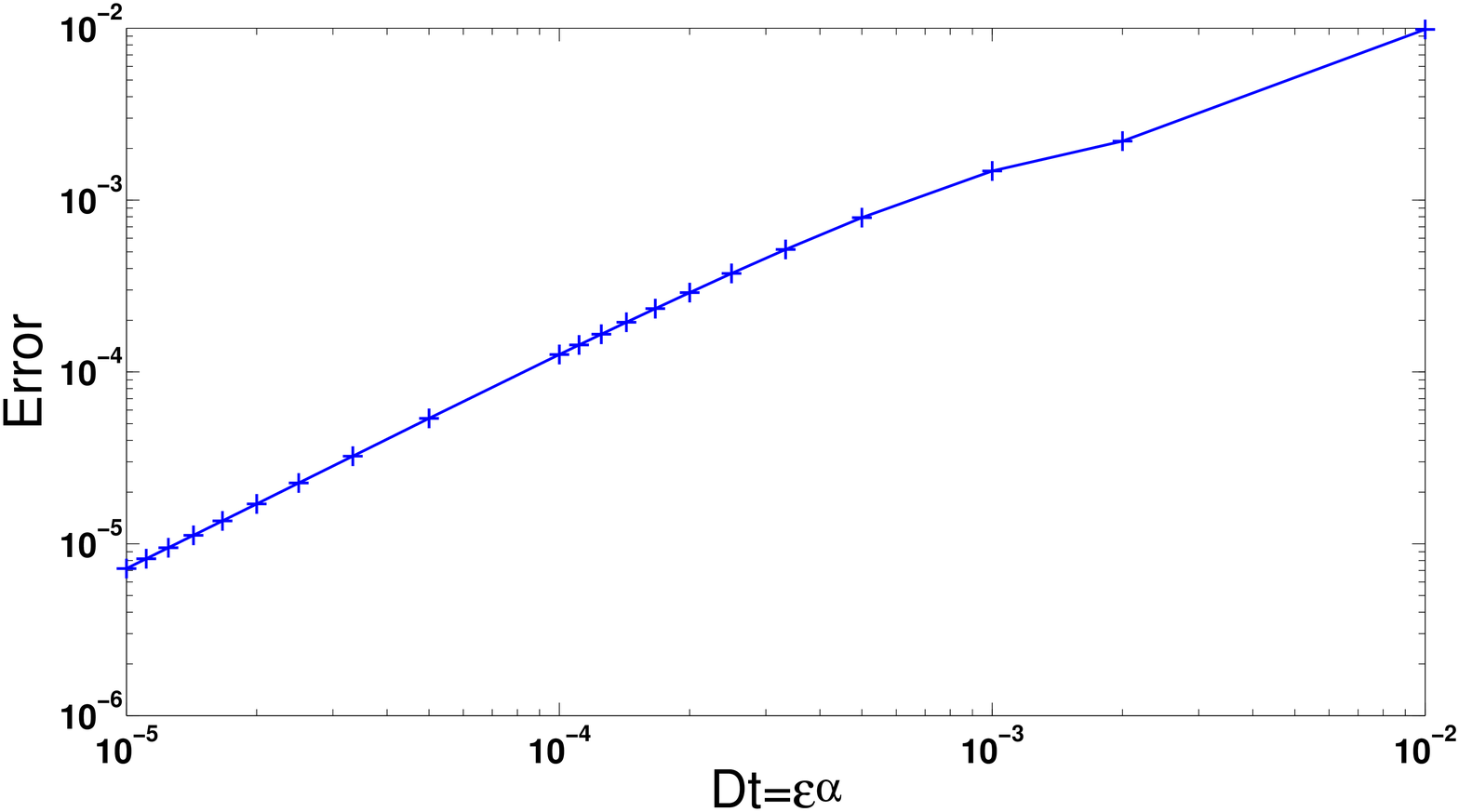}
\end{tabular}
\caption{Left: The difference between the DSA scheme computed for $\dt=10^{-5}$  and the DSA scheme computed for a range of $\dt$ as a function of $\ep$ (log scale). Right: The difference between the DSA scheme computed for $\dt=\ep^\alp$  and the DSA scheme computed for $\dt=10^{-7}$ as a function of $\dt$ (log scale).}
\label{a1moinsaAN_Unif}
\end{center}
\end{figure}

\section{Conclusion}

In this paper, we have first presented a formal derivation of diffusion and anomalous 
diffusion asymptotics from a BGK kinetic equation. This analysis enables us to understand the role of the large velocities induced by the heavy-tailed equilibrium in the anomalous diffusion asymptotics. 
Moreover, this formal derivation paves the way to construct three different numerical 
schemes for the kinetic equation, which all enjoy the asymptotic preserving property in both the diffusion and anomalous diffusion regimes. 

The asymptotic preserving character in the case of anomalous diffusion comes from the fact that 
we managed to take into account the effect of large velocities of the equilibrium. As we saw in the simplest 
case of a direct fully implicit scheme, the asymptotic is not preserved when these large velocities are truncated. 
The schemes using the micro-macro formulation as well as the Duhamel formulation of the equations 
require the use of the same trick on the velocities. Moreover, this last scheme seems to enjoy 
a uniform accuracy property which will be proved in  \cite{CrousHivertLemou2}.

In the near future, we aim at extending this work to more general context, considering 
higher dimensions, non periodic boundary conditions. The case of singular collision frequency also may generate anomalous diffusion (see \cite{Puel1}) and this also deserves a numerical study that we plan to do in a forthcoming work \cite{CrousHivertLemou}.

\newpage
\bibliographystyle{plain}
\bibliography{these}

\begin{thebibliography}{10}

\bibitem{Puel1}
N.~Ben~Abdallah, A.~Mellet, and M.~Puel.
\newblock Anomalous diffusion limit for kinetic equations with degenerate
  collision frequency.
\newblock {\em Math. Models Methods Appl. Sci.}, 21(11):2249--2262, 2011.

\bibitem{Puel2}
N.~Ben~Abdallah, A.~Mellet, and M.~Puel.
\newblock {Fractional diffusion limit for collisional kinetic equations : A
  Hilbert expansion approach}.
\newblock {\em Kinetic and related models}, 4(4):873--900, 2011.

\bibitem{BensoussanLionsPapanicolaou}
A.~Bensoussan, J.~L. Lions, and G.~Papanicolaou.
\newblock Boundary layers and homogenization of transport processes.
\newblock {\em Publ. Res. Int. Math. Sci.}, 15:53--157, 1979.

\bibitem{BobylevCarrilloGamba}
A.V. Bobylev, J.A. Carrillo, and I.M. Gamba.
\newblock On some properties of kinetic and hydrodynamic equations for
  inelastic interactions.
\newblock {\em J. Stat. Phys.}, 98:743--773, 2000.

\bibitem{BobylevGamba}
A.V. Bobylev and I.M. Gamba.
\newblock {B}oltzmann equations for mixtures of maxwell gases: Exact solutions
  and power like tails.
\newblock {\em J. Stat. Phys.}, 124(2-4):497--516, 2006.

\bibitem{BuetCordier}
C.~Buet and S.~Cordier.
\newblock Asymptotic preserving scheme and numerical methods for radiative
  hydrodynamic models.
\newblock {\em C. R. Math. Acad. Sci. Paris}, 338(12):951 -- 956, 2004.

\bibitem{CarilloGoudonLafitteVecil}
J.A. Carrillo, T.~Goudon, P.~Lafitte, and F.~Vecil.
\newblock Numerical schemes of diffusion asymptotics and moment closures for
  kinetic equations.
\newblock {\em J. Sci. Comput.}, 36(1):113--149, 2008.

\bibitem{CrousHivertLemou}
N.~Crouseilles, H.~Hivert, and M.~Lemou.
\newblock Numerical schemes for kinetic equations in the anomalous diffusion
  limit. {P}art {II}: the case of degenerate collision frequency.
\newblock {\em in preparation}.

\bibitem{CrousHivertLemou2}
N.~Crouseilles, H.~Hivert, and M.~Lemou.
\newblock Uniformly accurate numerical scheme for kinetic equation in the
  diffusion and anomalous diffusion limit.
\newblock {\em in preparation}.

\bibitem{DeMoor2}
S.~De~Moor.
\newblock Fractional diffusion limit for a stochastic kinetic equation.
\newblock {\em Stochastic Processes and their Applications}, 124(3):1335 --
  1367, 2014.

\bibitem{DeMoor}
S.~De~Moor.
\newblock {\em Limites diffusives pour des \'equations cin\'etiques
  stochastiques}.
\newblock PhD thesis, ENS Rennes, 2014.

\bibitem{DegondGoudonPoupaud}
P.~Degond, T.~Goudon, and F.~Poupaud.
\newblock Diffusion limit for non homogeneous and non-micro-reversible
  processes.
\newblock {\em Indiana Univ. Math. J}, 49:1175--1198, 2000.

\bibitem{DelcastillonegreteCarrerasLynch}
D.~del Castillo-Negrete, B.~Carreras, and V.~Lynch.
\newblock Non-diffusive transport in plasma turbulence: A fractional diffusion
  approach.
\newblock {\em Phys. Rev. Lett.}, 94:065003, Feb 2005.

\bibitem{ErnstBrito}
M.H. Ernst and R.~Brito.
\newblock Scaling solutions of inelastic {B}oltzmann equations with
  over-populated high energy tails.
\newblock {\em J. Stat. Phys.}, 109(3-4):407--432, 2002.

\bibitem{HochbruckOstermann}
M.~Hochbruck and A.~Ostermann.
\newblock Exponential integrators.
\newblock {\em Acta Numerica}, 19:209--286, 5 2010.

\bibitem{Jin}
S.~Jin.
\newblock Efficient asymptotic-preserving ({AP}) schemes for some multiscale
  kinetic equations.
\newblock {\em SIAM J. Sci. Comput.}, 21(2):441--454, 1999.

\bibitem{JinPareschi1}
S.~Jin and L.~Pareschi.
\newblock Discretization of the multiscale semiconductor {B}oltzmann equation
  by diffusive relaxation schemes.
\newblock {\em J. Comput. Phys.}, 161(1):312--330, June 2000.

\bibitem{JinPareschiToscani}
S~Jin, L.~Pareschi, and G.~Toscani.
\newblock Uniformly accurate diffusive relaxation schemes for multiscale
  transport equations.
\newblock {\em SIAM J. Numer. Anal.}, 38(3):913--936, 2000.

\bibitem{Klar1}
A.~Klar.
\newblock An asymptotic-induced scheme for nonstationary transport equations in
  the diffusive limit.
\newblock {\em SIAM J. Numer. Anal.}, 35(3):1073--1094, 1998.

\bibitem{BouquinEco}
C.~Kleiber and S.~Kotz.
\newblock {\em Statistical size distributions in economics and actuarial
  sciences}.
\newblock John Wiley and Sons, 2003.

\bibitem{LarsenKeller}
E.~W. Larsen and J.~B. Keller.
\newblock Asymptotic solution of neutron transport problems for small mean free
  paths.
\newblock {\em J. Math. Phys}, 15:75--81, 1974.

\bibitem{LemouMieussens}
M.~Lemou and L.~Mieussens.
\newblock A new asymptotic preserving scheme based on micro-macro formulation
  for linear kinetic equations in the diffusion limit.
\newblock {\em SIAM J. Sci. Comput.}, 31(1):334--368, 2008.

\bibitem{Mellet}
A.~Mellet.
\newblock Fractional diffusion limit for collisional kinetic equations: A
  moments method.
\newblock {\em Indiana Univ. Math. J.}, 59:1333--1360, 2010.

\bibitem{MelletMischlerMouhot}
A.~Mellet, S.~Mischler, and C.~Mouhot.
\newblock Fractional diffusion limit for collisional kinetic equations.
\newblock {\em Arch. Ration. Mech. Anal.}, 199:493--525, 2011.

\bibitem{MendisRosenberg}
D.~A. Mendis and M.~Rosenberg.
\newblock Cosmic dusty plasma.
\newblock {\em Annu. Rev. Astron. Astrophys.}, 32(1):419--463, 1994.

\bibitem{Mieussens}
L.~Mieussens.
\newblock On the asymptotic preserving property of the unified gas kinetic
  scheme for the diffusion limit of linear kinetic models.
\newblock {\em J. Comput. Phys.}, 253:138--156, 2013.

\bibitem{NaldiPareschi}
G.~Naldi and L.~Pareschi.
\newblock Numerical schemes for kinetic equations in diffusive regimes.
\newblock {\em Appl. Math. Letters}, 11(2):29 -- 35, 1998.

\bibitem{SummersThorne}
D.~Summers and R.~M. Thorne.
\newblock The modified plasma dispersion function.
\newblock {\em Phys. of Fluids B: Plasma Phys. (1989-1993)}, 3(8):1835--1847,
  1991.

\bibitem{Wigner}
E.~Wigner.
\newblock {\em Nuclear reactor theory}.
\newblock AMS, 1961.

\bibitem{Xu}
K.~Xu.
\newblock A gas-kinetic {BGK} scheme for the {N}avier-{S}tokes equations and
  its connection with artificial dissipation and {G}odunov method.
\newblock {\em J. Comput. Phys.}, 171:289--335, 2001.

\end{thebibliography}

\end{document}


\maketitle

\renewcommand{\thefootnote}{\fnsymbol{footnote}}
\footnotetext[5]{IRMAR. Universit\'e de Rennes $1$, Campus de Beaulieu. $35000$ Rennes }
\footnotetext[2]{INRIA-IPSO. Email : nicolas.crouseilles@inria.fr}
\footnotetext[3]{Email : helene.hivert@univ-rennes1.fr }
\footnotetext[4]{CNRS.  Email : mohammed.lemou@univ-rennes1.fr }

\renewcommand{\thefootnote}{\arabic{footnote}}

%

This file contains supplementary materials for the paper entitled \emph{Numerical \newline schemes for kinetic equations in the diffusion and anomalous diffusion limits. Part I: 
the case of heavy-tailed equilibrium}.

%
\section{General expression of the anomalous diffusion in the Fourier variable}
\label{Appendix1}

In this part, we will show the following proposition
\begin{proposition}
In case 2 (anomalous diffusion scaling), the equilibrium $M_\beta$ satisfies  
\[
\left\langle\left(\e^{-\ii\ep s k\cdot v} -1\right) M_\beta(v)\right\rangle\underset{{\ep \to 0 \atop s \to 0}\atop \text{independently}}
=(\ep |k|)^{\beta-d} \mathcal{C}(s)+(\ep s |k|)^{\beta-d}R(\ep,s,k),
\]
with $C(s)$ defined by
\begin{equation}
\label{Cs}
 \mathcal{C}(s)=\int_{\mathbb{R}^d} \left( \e^{-\ii s e\cdot w}-1\right)\frac{m}{|w|^\beta}\dd w,
 \end{equation}
 and where $R(\ep,s,k)$ is bounded and tends to $0$ for small 
$\ep$ and $s$ with an explicit form given by
\[
R(\ep,s,k)= \int_{w\in \mathbb{R}^d} \left( \e^{-\ii w\cdot e}-1 \right)\left( \frac{1}{(\ep s |k|)^\beta}M_\beta\left(\frac{w}{\ep s |k|}\right)-\frac{m}{|w|^\beta} \right)\dd w.
\]

\end{proposition}

\begin{proof}
Since we supposed that $M_\beta(v)\underset{\ep\to 0}{\sim}\frac{m}{|v|^\beta}$, 
there exists a continuous bounded function $\ell$ and a positive constant $K$ such that 
\begin{equation}
\label{ell}
\ell(v)\underset{|v|\to +\infty}\rightarrow 0 \text{~~and~~} M_\beta(v)-\frac{m}{|v|^\beta}=\ell(v)\frac{m}{|v|^\beta} \text{~if~} |v|>K.
\end{equation}
Hence we have
\[
\left\langle\left(\e^{-\ii\ep s k\cdot v} -1\right) M_\beta(v)\right\rangle =I_1+I_2+I_3,
\]
with  
\begin{align*}
I_1&=\int_{|v|\le K} \left(\e^{-\ii\ep s k\cdot v} -1\right) M_\beta(v) \dd v,   \\
I_2&=\int_{|v|> K} \left(\e^{-\ii\ep s k\cdot v} -1\right) \left(M_\beta(v)-\frac{m}{|v|^\beta}\right) \dd v, \\
I_3&=\int_{|v|> K} \left(\e^{-\ii\ep s k\cdot v} -1\right) \frac{m}{|v|^\beta} \dd v. 
\end{align*}
In  the sequel, we consider separately the equivalents of these three integrals to 
obtain the fractional diffusion term. 
Using the fact that $M_\beta$ is even, we can use a Taylor expansion for the exponential in the integral $I_1$
\begin{align*}
\left| I_1 \right| =\left|\int_{|v|\le K} \left(\e^{-\ii\ep s k\cdot v} -1\right) M_\beta(v) \dd v\right|&=\left|\int_{|v|\le K} \left(\e^{-\ii\ep s k\cdot v} -1+\ii \ep s k\cdot v\right) M_\beta(v) \dd v\right| \\
&\le \frac{\ep^2s^2}{2}\int_{|v|\le K} \left( k\cdot v\right)^2 M_\beta(v) \dd v \\
&\le \frac{|k|^2 |K|^2\ep^2s^2}{2}=o(\ep^\alp).
\end{align*}
For the integral $I_2$, we perform the change of variables $w=\ep |k| v$ for a nonzero $k$ and 
use the function $\ell$ in \eqref{ell}
\begin{align*}
I_2=\int_{|v|> K} \left(\e^{-\ii\ep s k\cdot v} -1\right)& \left(M_\beta(v)-\frac{m}{|v|^\beta}\right) \dd v =
\int_{|v|> K} \left(\e^{-\ii\ep s k\cdot v} -1\right) \ell(v)\frac{m}{|v|^\beta} \dd v \\
&=(\ep  |k|)^{\beta-d}\int_{|w|\ge \ep  K|k|} \left( \e^{-isw\cdot \frac{k}{|k|}}-1 \right) \ell\left( \frac{w}{\ep  |k|} \right) \frac{m}{|w|^\beta} \dd w \\
&=(\ep  |k|)^{\beta-d}\int_{w\in \mathbb{R}^d} \left( \e^{-isw\cdot \frac{k}{|k|}}-1 \right) \ell\left( \frac{w}{\ep  |k|} \right) \frac{m}{|w|^\beta} \dd w \\
&+(\ep  |k|)^{\beta-d}\int_{|w|< \ep  K|k|} \left( \e^{-isw\cdot \frac{k}{|k|}}-1 \right) \ell\left( \frac{w}{\ep  |k|} \right) \frac{m}{|w|^\beta} \dd w. 
\end{align*}
With the choice of $\beta=\alpha+d$, we know that those two integrals are well defined; 
indeed in a neighborhood of zero, the integrand is equivalent to $C|w|^{2-\beta}$ and 
at infinity we can dominate it by $\frac{C}{|w|^\beta}$. Since $\ell(v)\underset{|v|\to\infty}{\rightarrow} 0$,  
the dominated convergence theorem enables to write 
\[
I_2\underset{\ep\to 0}{=} o(\ep^{\beta-d}).
\]
We perform the same changes of variables in the integral $I_3$
\begin{align*}
I_3=\int_{|v|> K} \left(\e^{-\ii\ep s k\cdot v} -1\right) \frac{m}{|v|^\beta} \dd v&=(\ep  |k|)^{\beta-d}\int_{|w|\ge \ep  K|k|} \left( \e^{-isw\cdot \frac{k}{|k|}}-1 \right) \frac{m}{|w|^\beta} \dd w \\
&=(\ep  |k|)^{\beta-d}\int_{w\in \mathbb{R}^d} \left( \e^{-isw\cdot \frac{k}{|k|}}-1 \right)  \frac{m}{|w|^\beta} \dd w \\&
+(\ep  |k|)^{\beta-d}\int_{|w|< \ep  K|k|} \left( \e^{-isw\cdot \frac{k}{|k|}}-1 \right)  \frac{m}{|w|^\beta} \dd w. 
\end{align*}
With the same arguments as for $I_2$, the second integral tends to zero as $\ep$ goes to zero. 
The first one has rotational symmetry with respect to $k$ so, for any unitary vector $e$ of $\mathbb{R}^d$, we have 
\begin{align*}
I_3\underset{\ep\to 0}{\sim}&(\ep  |k|)^{\beta-d}\int_{w\in \mathbb{R}^d} \left( \e^{-isw\cdot e}-1 \right) \frac{m}{|w|^\beta} \dd w\\
 &=\ep^{\beta-d}|k|^{\beta-d}\mathcal{C}(s). 
\end{align*}
We finally get 
\[
\left\langle\left(\e^{-\ii\ep s k\cdot v} -1\right) M_\beta(v)\right\rangle\underset{\ep\to 0}{\sim}\ep^{\beta-d} 
|k|^{\beta-d}\int_{w\in \mathbb{R}^d} \left( \e^{-isw\cdot e}-1 \right) \frac{m}{|w|^\beta} \dd w=\ep^{\beta-d}
|k|^{\beta-d}\mathcal{C}(s),
\]
with $C(s)$ defined in (\ref{Cs}). We also need an estimation of the remainder in this equivalent 
when $\ep$ tends to zero and when $s$ is small (independently). We already have an upper bound 
for $I_1$; in order to get one for all the terms smaller than the equivalent we must specify the remainders 
in the integrals arising in the study of $I_2$ and $I_3$. Firstly, let us consider the second term at 
the end of the computation of $I_2$
\begin{align*}
(\ep  |k|)^{\beta-d}\int_{|w|< \ep  K|k|}& \left( \e^{-isw\cdot \frac{k}{|k|}}-1 \right) l\left( \frac{w}{\ep  |k|} \right) \frac{m}{|w|^\beta} \dd w
\\&\le c(\ep  |k|)^{\beta-d}\int_{|w|< \ep  K|k|} \left( \e^{-isw\cdot \frac{k}{|k|}}-1 \right) \frac{m}{|w|^\beta} \dd w \\
&\le c(\ep  |k|)^{\beta-d}s^2\int_{|w|< \ep  K|k|}|w|^{2-\beta}\dd w\\
&\le c(\ep  |k|)^{\beta-d}s^2 \int_0^{\ep K |k|} r^{d-1}r^{2-\beta}\dd r \\
&\le c s^2\ep^2 |k|^2,
\end{align*}
where $c$ is a constant depending on the upper bound of $\ell$, the constants $m$ 
and $\beta$, the dimension $d$ and $K$. We notice that the remainder in the integral $I_3$ 
can be bounded exactly in the same way, so it remains to consider the following term on which we 
perform the change of variable $w\rightarrow ws$
\begin{align*}
(\ep  |k|)^{\beta-d}\int_{w\in \mathbb{R}^d}& \left( \e^{-isw\cdot \frac{k}{|k|}}-1 \right) \ell\left( \frac{w}{\ep  |k|} \right) \frac{m}{|w|^\beta} \dd w
\\& = (\ep s |k|)^{\beta-d}\int_{w\in \mathbb{R}^d} \left( \e^{-\ii w\cdot \frac{k}{|k|}}-1 \right) \ell\left( \frac{w}{\ep s  |k|} \right) \frac{m}{|w|^\beta} \dd w. 
\end{align*} 
Using the same argument of dominated convergence, the integral has null limit for small $\ep$ and $s$, 
hence we get
\[
\left\langle\left(\e^{-\ii\ep s k\cdot v} -1\right) M_\beta(v)\right\rangle\underset{{\ep \to 0 \atop s \to 0}\atop \text{independently}}
=(\ep |k|)^{\beta-d} \mathcal{C}(s)+(\ep s |k|)^{\beta-d}R(\ep,s,k),
\]
where $R(\ep,s,k)$ is bounded and tends to $0$ for small $\ep$ and $s$ with an explicit form given by
\[
R(\ep,s,k)= \int_{w\in \mathbb{R}^d} \left( \e^{-\ii w\cdot e}-1 \right)\left( \frac{1}{(\ep s |k|)^\beta}M_\beta\left(\frac{w}{\ep s |k|}\right)-\frac{m}{|w|^\beta} \right)\dd w.
\]
\qquad \end{proof}

\section{General expression of the anomalous diffusion in the original space variable}
\label{Appendix2}
In this part, we will show the following proposition which enables to derive 
the anomalous diffusion limit of the kinetic equation with the original space variable 
in the general case.
\begin{proposition}
Considering an equilibrium $M_\beta(v)$ in the case 2 (anomalous diffusion scaling), 
the following function $a(\ep, z)$ 
\[
a(\ep,z)=\int_0^{\frac{t}{\ep^\alp}}|z|^\beta \frac{\e^{-s}}{(\ep s)^d}M_\beta\left(\frac{z}{\ep s}\right)\dd s.
\]
satisfies 
\[
a(\ep,z)\underset{\ep\to 0}{\sim} m\ep^{\beta-d}\int_0^{+\infty}s^{\beta-d}\e^{-s}\dd s,
\]
and 
\[
a(\ep,z)-m\ep^{\beta-d}\int_0^{+\infty}s^{\beta-d}\e^{-s}\dd s\underset{\ep\to 0}{\le} C \ep^{\beta-d},
\]
where $C$ is a constant independent of $\ep$. 
\end{proposition}

\begin{proof}
We start with the equality 
\[
a(\ep,z)=\int_0^{+\infty}|z|^\beta \frac{\e^{-s}}{(\ep s)^d}M_\beta\left(\frac{z}{\ep s}\right)\dd s-\int_{\frac{t}{\ep^\alp}}^{+\infty}|z|^\beta \frac{\e^{-s}}{(\ep s)^d}M_\beta\left(\frac{z}{\ep s}\right)\dd s,
\]
and as $M_\beta$ is bounded by a constant $C$, we have
\begin{align*}
\left| \int_{\frac{t}{\ep^\alp}}^{+\infty}|z|^\beta\frac{\e^{-s}}{(\ep s)^d}M_\beta\left(\frac{z}{\ep s}\right)\dd s\right|& \le C \left|z\right|^\beta \frac{1}{\ep^d}\int_{\frac{t}{\ep^\alp}}^{+\infty}\frac{\e^{-s}}{s^d}\dd s \underset{\ep\to 0}= O(\ep^\infty). 
\end{align*}
Hence, we obtain 
\[
\frac{1}{\ep^{\beta-d}}\left| \int_{\frac{t}{\ep^\alp}}^{+\infty}|z|^\beta\frac{\e^{-s}}{(\ep s)^d}M_\beta\left(\frac{z}{\ep s}\right)\dd s\right|\underset{\ep\to 0}\longrightarrow 0, 
\]
and there exists a constant $C_1$ such that 
\[
\left| \int_{\frac{t}{\ep^\alp}}^{+\infty}|z|^\beta \frac{\e^{-s}}{(\ep s)^d}M_\beta\left(\frac{z}{\ep s}\right)\dd s\right| \le \ep^{\beta-d}C_1.
\]
As we did in the previous appendix, we consider a function $\ell(v)$ satisfying \eqref{ell}. 
Considering that, for a nonzero $z$, 
\[
\left| \frac{z}{\ep s} \right|>K \Leftrightarrow s<\frac{\left| z\right|}{\ep K},
\]
$a(\ep,z)$ becomes
\begin{align}
a(\ep,z)&=\int_0^{+\infty}\left|z\right|^\beta \frac{\e^{-s}}{\left(\ep s\right)^d}\frac{m}{\left|\frac{z}{\ep s}\right|^\beta}\dd s 
+\int_0^{\frac{\left|z\right|}{\ep K}}\left|z\right|^\beta \frac{\e^{-s}}{\left(\ep s\right)^d}\left(M_\beta\left( \frac{z}{\ep s} \right) - \frac{m}{\left|\frac{z}{\ep s}\right|^\beta}\right)\dd s \nonumber\\
&+\int_{\frac{\left|z\right|}{\ep K}}^{+\infty}\left|z\right|^\beta \frac{\e^{-s}}{\left(\ep s\right)^d}M_\beta\left( \frac{z}{\ep s} \right) \dd s 
-\int_{\frac{|z|}{\ep K}}^{+\infty}\left|z\right|^\beta \frac{\e^{-s}}{\left(\ep s\right)^d}\frac{m}{\left|\frac{z}{\ep s}\right|^\beta}\dd s \nonumber\\
\label{aepsz}
&-\int_{\frac{t}{\ep^\alp}}^{+\infty}|z|^\beta \frac{\e^{-s}}{(\ep s)^d}M_\beta\left(\frac{z}{\ep s}\right)\dd s.
\end{align}
Now we consider separately these five integrals of \eqref{aepsz}. 
The first one is exactly the equivalent we are looking for
\[
\int_0^{+\infty}\left|z\right|^\beta \frac{\e^{-s}}{\left(\ep s\right)^d}\frac{m}{\left|\frac{z}{\ep s}\right|^\beta}\dd s=m\ep^{\beta-d}\int_0^{+\infty}s^{\beta-d}\e^{-s}\dd s,
\]
so we must prove that the other terms of \eqref{aepsz} are small. For the second integral of \eqref{aepsz}, 
we use the function $\ell$ introduced in \eqref{ell} 
\begin{align*}
\int_0^{\frac{\left|z\right|}{\ep K}}\left|z\right|^\beta \frac{\e^{-s}}{\left(\ep s\right)^d}\left(M_\beta\left( \frac{z}{\ep s} \right) - \frac{m}{\left|\frac{z}{\ep s}\right|^\beta}\right)\dd s 
&=\int_0^{\frac{\left|z\right|}{\ep K}}\left|z\right|^\beta \frac{\e^{-s}}{\left(\ep s\right)^d}\ell\left( \frac{z}{\ep s} \right) \frac{m}{\left|\frac{z}{\ep s}\right|^\beta}\dd s \\
&=\ep^{\beta-d}\int_0^{+\infty}s^{\beta-d}\e^{-s}\ell\left(\frac{z}{\ep s}  \right)\mathbf{1}_{\left[0,\frac{\left|z\right|}{\ep K}\right]}(s)\dd s. 
\end{align*}
As $\ell(v)\underset{|v|\to+\infty}\longrightarrow0$, we have
\[
s^{\beta-d}\e^{-s}\ell\left(\frac{z}{\ep s}  \right)\mathbf{1}_{\left[0,\frac{\left|z\right|}{\ep K}\right]}(s)\dd s\underset{\ep\to 0}\longrightarrow0,
\]
and
\[
\left| s^{\beta-d}\e^{-s}\ell\left(\frac{z}{\ep s}  \right)\mathbf{1}_{\left[0,\frac{\left|z\right|}{\ep K}\right]}(s)\dd s\right|\le  C s^{\beta-d}\e^{-s}, 
\]
where the constant $C$ denotes the maximum of $\ell$. 
So, by the dominated convergence theorem 
\[
\frac{1}{\ep^{\beta-d}}\int_0^{\frac{\left|z\right|}{\ep K}}\left|z\right|^\beta \frac{\e^{-s}}{\left(\ep s\right)^d}\left(M_\beta\left( \frac{z}{\ep s} \right) - \frac{m}{\left|\frac{z}{\ep s}\right|^\beta}\right)\dd s \underset{\ep\to 0}\longrightarrow0,
\]
and there exists a constant $C_2$ such that 
\[
\left| \int_0^{\frac{\left|z\right|}{\ep K}}\left|z\right|^\beta \frac{\e^{-s}}{\left(\ep s\right)^d}\left(M_\beta\left( \frac{z}{\ep s} \right) - \frac{m}{\left|\frac{z}{\ep s}\right|^\beta}\right)\dd s \right| \le C_2 \ep^{\beta-d}.
\]
Then, we remark that the third and fourth integral in \eqref{aepsz} are exponentially small 
\begin{align*}
&\left| \int_{\frac{\left|z\right|}{\ep K}}^{+\infty}\left|z\right|^\beta \frac{\e^{-s}}{\left(\ep s\right)^d}M_\beta\left( \frac{z}{\ep s} \right) \dd s 
-\int_{\frac{|z|}{\ep K}}^{+\infty}\left|z\right|^\beta \frac{\e^{-s}}{\left(\ep s\right)^d}\frac{m}{\left|\frac{z}{\ep s}\right|^\beta}\dd s \right| 
\\&\le C \frac{|z|^\beta}{\ep^d}\int_{\frac{\left|z\right|}{\ep K}}^{+\infty}\frac{\e^{-s}}{s^d}\dd s +\ep^{\beta-d}m\int_{\frac{\left|z\right|}{\ep K}}^{+\infty}\e^{-s}s^{\beta-d}\dd s \\
&= O(\ep^{\infty}),
\end{align*}
where $C$ denotes the maximum of $M_\beta$. Hence, we deduce 
\[
\frac{1}{\ep^{\beta-d}} \left(\int_{\frac{\left|z\right|}{\ep K}}^{+\infty}\left|z\right|^\beta \frac{\e^{-s}}{\left(\ep s\right)^d}M_\beta\left( \frac{z}{\ep s} \right) \dd s 
-\int_{\frac{|z|}{\ep K}}^{+\infty}\left|z\right|^\beta \frac{\e^{-s}}{\left(\ep s\right)^d}\frac{m}{\left|\frac{z}{\ep s}\right|^\beta}\dd s\right)\underset{\ep\to 0}\longrightarrow 0
\]
and there exists a constant $C_3$ such that
\[
\left| \int_{\frac{\left|z\right|}{\ep K}}^{+\infty}\left|z\right|^\beta \frac{\e^{-s}}{\left(\ep s\right)^d}M_\beta\left( \frac{z}{\ep s} \right) \dd s 
-\int_{\frac{|z|}{\ep K}}^{+\infty}\left|z\right|^\beta \frac{\e^{-s}}{\left(\ep s\right)^d}\frac{m}{\left|\frac{z}{\ep s}\right|^\beta}\dd s \right| \le \ep^{\beta-d}C_3.
\]
Since the last integral of \eqref{aepsz} has already been treated, this conclude the proof. 

\qquad \end{proof}